\theoremstyle{plain}
\newtheorem{thm}{Theorem}[section]
\newtheorem{defn}[thm]{Definition}
\newtheorem{lem}[thm]{Lemma}
\newtheorem{prop}[thm]{Proposition}
\newtheorem{cor}[thm]{Corollary}
\newtheorem{rem}[thm]{Remark}
\newtheorem{exam}[thm]{Example}
\title[Fourier analysis of vector measures on compact groups]{Fourier analysis associated to a vector measure on a compact group}
\author{Manoj Kumar}
\address{Department of Mathematics \endgraf Indian Institute of Technology Delhi \endgraf Delhi - 110016 \endgraf India}
\email{manojk9t3@gmail.com}
\author{N. Shravan Kumar}
\address{Department of Mathematics \endgraf Indian Institute of Technology Delhi \endgraf Delhi - 110016 \endgraf India}
\email{shravankumar@maths.iitd.ac.in}
\begin{document}

\begin{abstract}
In this paper, we introduce and study the Fourier transform of functions which are integrable with respect to a vector measure on a compact group (not necessarily abelian). We also study the Fourier transform of vector measures. We also introduce and study the convolution of functions from $L^p$-spaces associated to a vector measure. We prove some analogues of the classical Young's inequalities. Similarly, we also study convolution of a scalar measure and a vector measure.
\end{abstract}

\keywords{Compact group, Vector measure, Fourier transform, Completely bounded map, Convolution}
\subjclass[2010]{Primary 43A15, 43A30,  43A77; Secondary 22C05}

\maketitle

\section{Introduction}
Let $G$ be a locally compact abelian group with a fixed Haar measure. Then the Fourier transform on $L^1(G)$ is very well known. Also, it is a known fact that this Fourier transform can be extended to $M(G),$ the space of all complex Radon measures on $G,$ called the Fourier-Stieltjes transform. Recently, J. M. Calabuig et al. \cite{CFNP}, have introduced and studied the Fourier transform of functions which are integrable with respect to a vector measure on a compact abelian group. This was extended by O. Blasco \cite{B} to the space of vector measures on compact abelian groups.

Let $G$ be a compact group. The main aim of this paper is to initiate a systematic study of the Fourier analysis of functions which are integrable with respect to a vector measure on $G.$ Let $\nu$ be a $\sigma$-additive vector measure. In section 4, we define the Fourier transform of functions in $L^1(\nu).$ In this section, we also discuss the Fourier transform of functions which are weakly integrable with respect to an absolutely continuous vector measure. We also show that these two notions of Fourier transform coincide if the function is integrable. 

It is a folklore that one will have to deal with matrices of higher orders, if one leaves the realm of abelian groups. Moreover, if the measure is a vector measure, then the entries of the matrix are from the underlying vector space. Thus, in order to make sense of the norm of these matrices, one is forced to assume that the underlying vector spaces are operator spaces, rather than just Banach spaces, unlike the case of compact abelian groups.  This crucial point is obviously at the root of the notion of the Fourier transform.

In order to define the Fourier transform of a function defined on a non-abelian compact group, which is integrable with respect to a vector measure, one needs to provide a meaning for the integration of a vector-valued function with respect to a vector measure. In 2001, G. F. Stef\'ansson \cite{S} developed the theory of integration of a vector-valued function with respect to a vector measure. Let $X$ and $Y$ be operator spaces which are also Banach spaces, $f$ a $Y$-valued function and $\nu$ a $X$-valued measure. In Section 3, we give an outline about the integration of $f$ with respect to $\nu.$ Further, in Theorem \ref{CBInt} an operator space structure on $L^1(\nu)$ is also provided.

In the classical Fourier analysis, an easy consequence of the definition of the Fourier transform of an $L^1$-function is that the Fourier transform is a bounded operator. In Theorems \ref{CBFTO} and \ref{CBFOWIF}, we show that the Fourier transform operators defined in Section 4 are completely bounded.

One of the classical results of the Fourier analysis on locally compact abelian groups is the Riemann-Lebesgue Lemma. Example \ref{EFRLL} of this paper gives an example of a vector measure on an infinite compact group where the analogue of the Riemann-Lebesgue Lemma fails.

Later, in Section 7, we define the Fourier transform of vector measures. Again operator spaces play a dominant role. In particular, we show that the Fourier transform operator on the space of vector measures is completely bounded. Finally, we also find a sufficient condition on the Banach space for an analogue of the Riemann-Lebesgue Lemma to hold.

In Section 6, we define the convolution of functions from $L^p$-spaces associated to a vector measure. In a similar spirit, in Section 8, we study the convolution of a scalar measure and a vector measure. We also find the Fourier transform of convolution. We prove some analogues of the Young's inequality corresponding to each convolution. Finally, in Section 9, we prove integrability properties of the convolution under various assumptions on the underlying $L^p$-spaces. We also consider the classical convolution between functions in $L^p$-spaces with respect to the Haar measure and functions in the $L^p$-spaces associated to a vector measure. This is done in Theorem \ref{T4}.

We begin with some of the required preliminaries in the next section.

\section{Preliminaries}
\subsection{Fourier analysis on compact groups} 
Let $G$ be a compact Hausdorff group and let $m_G$ denote the normalized positive Haar measure on $G.$ For $1\leq p\leq\infty$,  $L^p(G)$ will denote the usual $p^{\mbox{th}}$-Lebesgue space with respect to the measure $m_G.$ It is well known that an irreducible unitary representation of a compact group $G$ is always finite-dimensional. Let $\widehat{G}$ be the set of all unitary equivalence classes of irreducible unitary representations of $G$. The set $\widehat{G}$ is called the unitary dual of $G$ and $\widehat{G}$ is given the discrete topology. 

Let $\{(X_\alpha,\|.\|_\alpha)\}_{\alpha\in\wedge}$ be a collection of Banach spaces. We shall denote by $\ell^\infty\mbox{-}\underset{\alpha\in\wedge}{\oplus}X_\alpha,$ the Banach space $\left\{(x_\alpha)\in\underset{\alpha\in\wedge}{\Pi}X_\alpha:\underset{\alpha\in\wedge}{\sup}\|x_\alpha\|_\alpha<\infty\right\}$ equipped with the norm $\|(x_\alpha)\|_\infty:=\underset{\alpha\in\wedge}{\sup}\|x_\alpha\|_\alpha.$ Similarly, we shall also denote by $c_0\mbox{-}\underset{\alpha\in\wedge}{\oplus}X_\alpha,$ the space consisting of those vectors $(x_\alpha)$ from $\ell^\infty\mbox{-}\underset{\alpha\in\wedge}{\oplus}X_\alpha$ which goes to $0$ as $\alpha\rightarrow\infty.$ It is clear that $c_0\mbox{-}\underset{\alpha\in\wedge}{\oplus}X_\alpha$ is a closed subspace of $\ell^\infty\mbox{-}\underset{\alpha\in\wedge}{\oplus}X_\alpha.$

Let $\pi$ be an irreducible unitary representation of $G$ on the Hilbert space $\mathcal{H}_\pi$ of dimension $d_\pi$ and let $(e_i^\pi)_{1\leq i\leq d_\pi}$ be an ordered orthonormal basis for $\mathcal{H_\pi}.$ Then for $t\in G,\ \pi(t)$ will denote the $d_\pi\times d_\pi$ matrix whose $(i,j)^{\mbox{th}}$-entry is given by $\pi(t)_{ij}=\langle\pi(t)e_j^\pi,e_i^\pi\rangle.$ For $f\in L^1(G),$ the Fourier transform of $f,$ denoted $\widehat{f},$ is defined as $$\widehat{f}(\pi)=\frac{1}{d_\pi}\int_Gf(t)\pi(t)^* \,dm_G(t),~[\pi]\in\widehat{G}.$$ Then $\widehat{f}(\pi)$ is also a $d_\pi\times d_\pi$ matrix whose $(i,j)^{\mbox{th}}$-entry is given by  $$\widehat{f}(\pi)_{ij}=\frac{1}{d_\pi}\int_Gf(t)\overline{\pi(t)_{ji}}\,dm_G(t).$$

 Note that the Fourier transform operator $f\mapsto \widehat{f}$ maps $L^1(G)$ into $\ell^\infty\mbox{-}\underset{[\pi]\in\widehat{G}}{\oplus}M_{d_\pi}.$ This operator is injective and bounded. If  $f,g\in L^1(G)$ then $\widehat{f*g}(\pi)=d_\pi\widehat{g}(\pi)\widehat{f}(\pi),~[\pi]\in\widehat{G}.$
 
 \begin{thm}\label{FT} Let $f\in L^2(G)$. Then,
 \begin{enumerate}[(i)]
 \item {\normalfont(Plancheral Theorem).} $\|f\|_2^2=\sum_{[\pi]\in\widehat{G}}d_\pi^3\ tr(\widehat{f}(\pi)^*\widehat{f}(\pi)).$
 \item {\normalfont(Inversion Theorem).} $f=\sum_{[\pi]\in\widehat{G}}d_\pi^2\ tr(\widehat{f}(\pi)\pi(\cdot)),$ where the series converges in the $L^2(G)$-norm.
 \end{enumerate}
 \end{thm}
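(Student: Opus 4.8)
The plan is to deduce both parts from the Peter--Weyl theorem, which asserts that the rescaled matrix coefficients $\{\sqrt{d_\pi}\,\pi_{ij} : [\pi]\in\widehat{G},\ 1\le i,j\le d_\pi\}$ form an orthonormal basis of $L^2(G)$, where I write $\pi_{ij}(t):=\pi(t)_{ij}$. Granting this, the entire statement reduces to Parseval's identity together with a careful bookkeeping of the normalizing constants forced by the factor $1/d_\pi$ in the definition of $\widehat{f}$. I would invoke Peter--Weyl (equivalently, completeness of the matrix coefficients plus the Schur orthogonality relations $\int_G \pi_{ij}\overline{\pi_{kl}}\,dm_G = d_\pi^{-1}\delta_{ik}\delta_{jl}$) rather than reprove it, since that is the only genuine analytic content here.

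First I would expand an arbitrary $f\in L^2(G)$ in this orthonormal basis as $f = \sum_{[\pi]}\sum_{i,j}c_{ij}^\pi\sqrt{d_\pi}\,\pi_{ij}$ with coefficients $c_{ij}^\pi = \sqrt{d_\pi}\int_G f(t)\overline{\pi_{ij}(t)}\,dm_G(t)$, the series converging in $L^2(G)$. The key step is then to relate these coefficients to the entries of $\widehat{f}(\pi)$. Comparing $c_{ij}^\pi$ with the formula $\widehat{f}(\pi)_{ij} = \frac{1}{d_\pi}\int_G f(t)\overline{\pi(t)_{ji}}\,dm_G(t)$ recorded above, one finds the identity $\widehat{f}(\pi)_{ij} = d_\pi^{-3/2}\,c_{ji}^\pi$; that is, the entries of $\widehat{f}(\pi)$ are exactly the orthonormal-basis coefficients of $f$, up to a transpose and a fixed power of $d_\pi$.

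For part (i), I would use the Hilbert--Schmidt expression $tr(\widehat{f}(\pi)^*\widehat{f}(\pi)) = \sum_{i,j}|\widehat{f}(\pi)_{ij}|^2$ and substitute the identity above to obtain $d_\pi^3\,tr(\widehat{f}(\pi)^*\widehat{f}(\pi)) = \sum_{i,j}|c_{ij}^\pi|^2$. Summing over $[\pi]$ and applying Parseval's identity $\|f\|_2^2 = \sum_{[\pi]}\sum_{i,j}|c_{ij}^\pi|^2$ then yields the Plancherel formula. For part (ii), I would compute $tr(\widehat{f}(\pi)\pi(t)) = \sum_{i,j}\widehat{f}(\pi)_{ij}\,\pi_{ji}(t)$, substitute the same identity, and verify that $d_\pi^2\,tr(\widehat{f}(\pi)\pi(\cdot)) = \sum_{i,j}c_{ij}^\pi\sqrt{d_\pi}\,\pi_{ij}$. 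Summing over $[\pi]$ reproduces precisely the orthonormal expansion of $f$, which converges in $L^2(G)$ by Peter--Weyl, giving the inversion formula.

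The main thing to watch is the arithmetic of the powers of $d_\pi$: the factors $d_\pi^3$ and $d_\pi^2$ in the statement are exactly what is needed to absorb the $d_\pi^{-3/2}$ arising from the combination of the $1/d_\pi$ in the definition of $\widehat{f}$ with the $\sqrt{d_\pi}$-normalization of the basis. Consequently the only real obstacle is keeping this bookkeeping consistent across the two parts, since the transpose swapping the roles of the indices $i$ and $j$ must be tracked correctly when passing between $\widehat{f}(\pi)_{ij}$ and $c_{ij}^\pi$.
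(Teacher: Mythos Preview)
Your argument is correct: the bookkeeping with $\widehat{f}(\pi)_{ij}=d_\pi^{-3/2}c_{ji}^\pi$ is right, and both parts follow from Parseval and the orthonormal expansion exactly as you describe. Note, however, that the paper does not supply its own proof of this theorem; it is quoted in the preliminaries as a classical fact about compact groups, with a reference to Folland and Hewitt--Ross immediately afterward. Your derivation via Peter--Weyl and Schur orthogonality is precisely the standard proof one finds in those references, so there is no divergence in approach to discuss.
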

\noindent For more details on compact groups, we refer to \cite{F, HR}.

\subsection{Vector measure} 
Let $G$ be a compact group. Let $X$ be a complex Banach space and let $\nu$ be a $\sigma$-additive $X$-valued vector measure on $G.$ Let $X^\prime$ be the dual of $X$ and let $B_{X^\prime}$ be the closed unit ball in $X^\prime.$ For each $x^\prime\in X^\prime,$ we shall denote by $\langle\nu,x^\prime\rangle,$ the corresponding scalar valued measure for the vector measure $\nu,$ which is defined as $\langle\nu,x^\prime\rangle(A)=\langle\nu(A),x^\prime\rangle,\  A\in\mathfrak{B}(G).$ A set $A\in\mathfrak{B}(G)$ is said to be $\nu$-null if $\nu(B)=0$ for every $B\subset A.$ The variation of $\nu,$ denoted $|\nu|,$ is a positive measure defined as follows: For a set $A\in\mathfrak{B}(G),$ $$|\nu|(A)=\sup\left\{\sum_{E\in\rho}\|\nu(E)\|: \rho\mbox{ the finite partition of A}\right\}.$$ The vector measure $\nu$ is said to be measure of bounded variation if $|\nu|(G)<\infty.$ The semivariation of $\nu$ on a set $A\in\mathfrak{B}(G)$ is given by $\|\nu\|(A)=\underset{x^\prime\in B_{X^\prime}}\sup |\langle\nu,x^\prime\rangle|(A),$ where $|\langle\nu,x^\prime\rangle|$ is the total variation of the scalar measure $\langle\nu,x^\prime\rangle.$ Let $\|\nu\|$ denote the quantity $\|\nu\|(G).$ The vector measure $\nu$ is said to be {\it absolutely continuous with respect to a non-negative scalar measure $\mu$} if $\underset{\mu(A)\rightarrow 0}{\lim}\nu(A)=0,\ A\in\mathfrak{B}(G).$ The vector measure $\nu$ is said to be {\it regular} if for each $\epsilon>0$ and $A\in\mathfrak{B}(G)$ there exist an open set $U$ and a closed set $F$ with $F\subset A\subset U$ such that $\|\nu\|(U\setminus F)<\epsilon$. 

We shall denote by $M(G,X)$ the space of all $\sigma$-additive $X$-valued vector measures on $G.$ Further, we shall denote by $M_{ac}(G,X)$ the subspace consisting of $X$-valued vector measures which are absolutely continuous with respect to the Haar measure $m_G.$ A Banach space $X$ is said to have the {\it Radon-Nikodym Property} with respect to $(G,\mathfrak{B}(G),m_G)$ if for each measure $\nu\in M_{ac}(G,X)$ of bounded variation there exists $f\in L^1(G,X)$ such that $d\nu=f\,dm_G.$ We shall denote by $\mathcal{M}(G,X)$ the subspace of all $X$-valued regular vector measures. Note that $M_{ac}(G,X)\subset\mathcal{M}(G,X).$

A complex valued function $f$ on $G$ is said to be $\nu$-weakly integrable if $f\in L^1(|\langle\nu,x^\prime\rangle|),$ for all $x^\prime\in X^\prime.$ We shall denote by $L^1_w(\nu)$ the Banach space of all $\nu$-weakly integrable functions equipped with the norm $$\|f\|_\nu=\underset{x^\prime\in B_{X^\prime}}{\sup}\int_G |f|\,d|\langle\nu,x^\prime\rangle|.$$ A $\nu$-weakly integrable function $f$ is said to be $\nu$-integrable if for each $A\in\mathfrak{B}(G)$ there exists a unique $x_A\in X$ such that $\int_A f\,d\langle\nu,x^\prime\rangle=\langle x_A,x^\prime\rangle,\ x^\prime\in X^\prime.$ The vector $x_A$ is denoted by $\int_Af\,d\nu.$ We shall denote by $L^1(\nu)$ the space of all $\nu$-integrable functions and it is also a Banach space when equipped with the $\|\cdot\|_\nu$ norm. Now, for $1\leq p<\infty$ we say that $f\in L^p(\nu)$ (respectively $f\in L^p_w(\nu)$) if $f^p\in L^1(\nu)$ (respectively $f^p\in L^1_w(\nu)).$ The spaces $L^p(\nu)$ and $L^p_w(\nu)$ are Banach spaces when equipped with the norm $\|f\|_{\nu,p}=\|f^p\|^{1/p}_\nu.$ Let $L^\infty(\nu)=L^\infty_w(\nu)$ denote the space of all $\nu$-a.e. bounded functions. The space $S(G),$ consisting of all simple functions on $G$, is dense in $L^p(\nu),~1\leq p<\infty.$

The following result is proved in \cite{B} when $G$ is abelian. We are omitting the proof as the proof given in \cite[Lemma 2.1]{B} works for a more general case.
\begin{lem}\label{dense}
Let $\nu$ be a $X$-valued regular vector measure on $G.$ Then the space $C(G)$ of all continuous functions on $G$ is dense in $L^p(\nu),$ for $1\leq p<\infty.$
\end{lem}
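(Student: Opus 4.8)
The plan is to exploit the density of the simple functions $S(G)$ in $L^p(\nu)$, recorded in the preliminaries, and thereby reduce the problem to approximating a single characteristic function $\chi_A$, $A\in\mathfrak{B}(G)$, by a continuous function in the $\|\cdot\|_{\nu,p}$-norm. Since every simple function is a finite linear combination of such characteristic functions and $C(G)$ is a vector space, the triangle inequality then upgrades the approximation of each $\chi_A$ to an approximation of an arbitrary simple function, and the density of $S(G)$ finishes the job. I would first note that $C(G)\subset L^p(\nu)$: as $G$ is compact, every continuous function is bounded, hence lies in $L^\infty(\nu)$, which is contained in $L^p(\nu)$ because the $\sigma$-additive measure $\nu$ has bounded semivariation.

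Now fix $A\in\mathfrak{B}(G)$ and $\epsilon>0$. First I would invoke the regularity of $\nu$ to produce an open set $U$ and a closed set $F$ with $F\subset A\subset U$ and $\|\nu\|(U\setminus F)<\epsilon$. Since a compact Hausdorff group is normal, Urysohn's lemma yields a continuous function $g\colon G\to[0,1]$ with $g\equiv 1$ on $F$ and $g\equiv 0$ on $G\setminus U$. The difference $\chi_A-g$ then vanishes on $F$ and on $G\setminus U$, while on $U\setminus F$ it is bounded by $1$ in absolute value; hence the pointwise domination $|\chi_A-g|^p\leq\chi_{U\setminus F}$ holds.

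Recalling that $\|h\|_{\nu,p}^p=\underset{x^\prime\in B_{X^\prime}}{\sup}\int_G|h|^p\,d|\langle\nu,x^\prime\rangle|$, for each $x^\prime\in B_{X^\prime}$ one estimates
\begin{equation*}
\int_G|\chi_A-g|^p\,d|\langle\nu,x^\prime\rangle|\leq\int_G\chi_{U\setminus F}\,d|\langle\nu,x^\prime\rangle|=|\langle\nu,x^\prime\rangle|(U\setminus F)\leq\|\nu\|(U\setminus F)<\epsilon,
\end{equation*}
the penultimate inequality being exactly the definition of the semivariation as the supremum over $x^\prime\in B_{X^\prime}$ of the scalar variations $|\langle\nu,x^\prime\rangle|(U\setminus F)$. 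Taking the supremum over $x^\prime\in B_{X^\prime}$ gives $\|\chi_A-g\|_{\nu,p}\leq\epsilon^{1/p}$, which tends to $0$ as $\epsilon\to 0$.

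As for the main obstacle, there is essentially no deep difficulty here: the argument is the standard ``regularity plus Urysohn'' density scheme. The only points deserving a little care are the verification that $C(G)$ genuinely sits inside $L^p(\nu)$ and the passage through the $p$-th power, and both are dispatched by the single pointwise bound $|\chi_A-g|^p\leq\chi_{U\setminus F}$ together with the fact that the semivariation $\|\nu\|$ dominates each scalar variation $|\langle\nu,x^\prime\rangle|$ uniformly over $B_{X^\prime}$.
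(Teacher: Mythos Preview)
Your argument is correct and is precisely the standard ``regularity plus Urysohn'' scheme one expects here. The paper itself omits the proof entirely, simply remarking that the proof of \cite[Lemma~2.1]{B} (given there for compact abelian groups) carries over verbatim; your write-up is essentially that proof, and there is nothing to add or correct.
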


If $f\in L^1(\nu)$ then $\nu_f(A)=\int_Af\,d\nu,$ for $A\in\mathfrak{B}(G),$ defines a $X$-valued measure on $G$ with $\|\nu_f\|=\|f\|_\nu$. If $f\in L^1_w(\nu)$ then $\nu_f$ is a $X^{\prime\prime}$-valued measure on $G$ given by $\langle\nu_f(A),x^\prime\rangle=\int_Af\,d\langle\nu,x^\prime\rangle, ~A\in\mathfrak{B}(G)$ and $x^\prime\in X^\prime.$

For $1\leq p<\infty,$ we shall denote by $\|\nu\|_{p,m_G},$ the $p$-semivariation of $\nu$ with respect to $m_G$, given by,  $$\|\nu\|_{p,m_G}=\sup\left\{\left\|\underset{A\in\rho}\sum\alpha_A\nu(A)\right\|_X:\rho\mbox{ the finite partition with }\underset{A\in\rho}\sum\alpha_A\chi_A\in B_{L^{p^\prime}(G)}\right\}$$ and for $p=\infty,$ $\|\nu\|_{\infty,m_G}=\underset{m_G(A)>0}\sup\frac{\|\nu(A)\|}{m_G(A)}.$ Let $M_p(G,X)$ denote the space of all $X$-valued vector measures with finite $p$-semivariation. We shall denote by $S(G,X)$ the space of all $X$-valued simple functions on $G.$ Further, for $1\leq p\leq\infty,$ we shall denote by $P_p(G,X)$ the closure of the space $S(G,X)$ in $M_p(G,X),$ where $P_p(G,X)$ is equipped with the norm $$\|\phi\|_{P_p(G,X)}=\|\nu_\phi\|_{p,m_G}=\sup_{x^\prime\in B_{X^\prime}}\|\langle\phi,x^\prime\rangle\|_p,\ \phi\in S(G,X).$$ Note that the space $C(G,X),$ consisting of all $X$-valued continuous functions on $G$, is dense in $P_p(G,X),~1\leq p<\infty$ and closed in $P_\infty(G,X).$

We denote by $T_\nu$ the operator from $C(G)$ to $X$ given by $T_\nu(f)=\int_Gf\ d\nu.$ If $\nu\in \mathcal{M}(G,X)$ then $T_\nu$ is a weakly compact operator and if $\nu\in M_p(G,X),\ 1<p\leq\infty,$ then $T_\nu$ can be extended to a bounded linear operator from $L^{p^\prime}(G)$ to $X$ with $\|T_\nu\|_{\mathcal{B}(L^{p^\prime}(G),X)}=\|\nu\|_{p,m_G}.$ For more details on vector measures and integration with respect to vector measures, we refer to \cite{D,DU, ORP}.

\subsection{Operator Spaces} 
In order to deal with compact groups in the non-abelian setting, one has to deal with matrices with vector-valued entries. Also, one has to be able to define norms of such matrices. Therefore, it is natural to deal only with the operator spaces, rather than with just Banach spaces. We shall now present some basics on operator spaces.

Let $X$ be a linear space. By $M_n(X)$ we shall mean the space of all $n\times n$ matrices with entries from the space $X.$ An {\it operator space} is a complex vector space $X$ together with an assignment of a norm $\|\cdot\|_n$ on the matrix space $M_n(X),$ for each $n\in\mathbb{N},$ such that
\begin{enumerate}[(i)]
\item $\|x\oplus y\|_{m+n}=max\{\|x\|_m,\|y\|_n\}$ and
\item $\|\alpha x\beta\|_n\leq\|\alpha\|\|x\|_m\|\beta\|$
\end{enumerate}
for all $x\in M_m(X),$ $y\in M_n(X),$ $\alpha\in M_{n,m}$ and $\beta\in M_{m,n}.$ It is clear from the definition that if $X$ is an operator space then $X^\prime$ is also an operator space where $M_n(X^\prime)$ is given the norm coming from the identification of $M_n(X^\prime)$ with $M_n(X)^\prime.$

It follows from the axiom (i) of the above definition that the inclusion from $M_r(X)$ into $M_{r+1}(X)$ is an isometry. It is also clear that if $x\in X$ and $\alpha\in M_n$ then $\|\alpha\otimes x\|_n=\|\alpha\|\|x\|.$ Hence it follows that, if $[x_{ij}]\in M_n(X)$ then,  $$\underset{i,j}{\max}\|x_{ij}\|\leq\|[x_{ij}]\|_n\leq n\ \underset{i,j}{\max}\|x_{ij}\|.$$ Thus $X$ is complete if and only if $M_n(X)$ is complete for some $n\in\mathbb{N}$ if and only if $M_n(X)$ is complete for all $n\in\mathbb{N}.$ 

Let $X$ and $Y$ be operator spaces and let $\varphi:X\rightarrow Y$ be a linear transformation. For any $n\in\mathbb{N},$ the {\it $n^{\mbox{th}}$-amplification} of $\varphi,$ denoted $\varphi_n,$ is defined as a linear transformation $\varphi_n:M_n(X)\rightarrow M_n(Y)$ given by $\varphi_n([x_{ij}]):=[\varphi(x_{ij})].$ The linear transformation $\varphi$  is said to be {\it completely bounded} if $\sup\{\|\varphi_n\||n\in\mathbb{N}\}<\infty.$ We shall denote by $\mathcal{CB}(X,Y)$ the space of all completely bounded linear mappings from $X$ to $Y$ equipped with the norm, denoted $\|\cdot\|_{cb},$ $$\|\varphi\|_{cb}:=\sup\{\|\varphi_n\||n\in\mathbb{N}\},\ \varphi\in\mathcal{CB}(X,Y).$$We shall say that $\varphi$ is a {\it complete isometry} if $\varphi_n$ is an isometry $\forall\ n\in\mathbb{N}.$

We would like to remark here that by Ruan's theorem, on the characterization of abstract operator spaces, there exists a Hilbert space $\mathcal{H}$ and a closed subspace $Y\subseteq\mathcal{B}(\mathcal{H})$ such that $X$ and $Y$ are completely isometric. 

Given two operator spaces $X_1\subseteq\mathcal{B}(\mathcal{H}_1)$ and $X_2\subseteq\mathcal{B}(\mathcal{H}_2),$ we define their minimal tensor product, denoted $X_1\otimes_{min}X_2,$ as the completion of their algebraic tensor product $X_1\otimes X_2$ inside $\mathcal{B}(\mathcal{H}_1\otimes_2\mathcal{H}_2),$ where $\otimes_2$ denotes the Hilbert space tensor product. It is worth noting that, if $X$ is an operator space then $M_n\otimes_{min}X$ and $M_n(X)$ are completely isometric.

If $[x_{ij}]\in M_{n}(X)$ and $[x^\prime_{kl}]\in M_m(X^\prime),$ then the matrix pairing between $[x_{ij}]$ and $[x^\prime_{kl}]$ is the $mn\times mn$ matrix given by $$\langle\langle[x_{ij}],[x^\prime_{kl}]\rangle\rangle:=[\langle x_{ij},x^\prime_{kl}\rangle].$$

For any undefined notions or for further reading on operator spaces, the reader is asked to refer \cite{ER} or \cite{P}. 

Throughout  this paper, $G$ will always denote a compact Hausdorff group, $X$ an operator space which is also a Banach sapce and $\nu$ a $\sigma$-additive $X$-valued vector measure.

\section{Tensor integrability}
Let $X$ and $Y$ be two operator spaces which are also Banach spaces. In this section, we give an outline about the integration of $Y$-valued functions with respect to the $X$-valued vector measure $\nu.$ For the proof of the results, we refer to \cite{S}. Finally, we show that the spaces $L^1(\nu),$ $L^1_w(\nu)$ and $M(G,X)$ are operator spaces.
\begin{defn}
A function $f:G\rightarrow Y$ is said to be $\nu$-measurable if there exists a sequence $(\phi_n)$ of $Y$-valued simple functions on $G$ such that $\underset{n}{\lim}\|\phi_n(t)-f(t)\|_Y=0$ $\nu$-a.e..
\end{defn}
For a $\nu$-measurable function $f:G\rightarrow Y,$ we shall denote by $N(f)$ the quantity $$\underset{x^\prime\in B_{X^\prime}}{\sup}\int_G\|f\|\,d|\langle\nu,x^\prime\rangle|.$$
\begin{defn}
A $\nu$-measurable $Y$-valued function $f$ is said to be $\otimes_{min}$-integrable if there exists a sequence $(\phi_n)$ of $Y$-valued simple functions such that $\underset{n}{\lim}\ N(\phi_n-f)=0.$
\end{defn}
Note that, for each $A\in\mathfrak{B}(G),$ the sequence $\left(\int_A\phi_n\,d\nu\right)$ is a Cauchy sequence in $Y\otimes_{min}X$ and hence converges to a limit in $Y\otimes_{min}X.$ We shall denote the limit by $\int_Af\,d\nu\in Y\otimes_{min}X,$ called as $\otimes_{min}$-integral of $f$ over $A$ with respect to $\nu.$ Let $L^1(\nu,Y,X)$ denote the space of all such $\otimes_{min}$-integrable functions on $G$. The space $L^1(\nu,Y,X)$ becomes a Banach space when it is equipped with the $N(\cdot)$ norm.
\begin{thm}
    Let $f$ be a $\nu$-measurable function. Then $f\in L^1(\nu,Y,X)$ if and only if $\|f\|\in L^1(\nu).$
\end{thm}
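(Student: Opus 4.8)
The plan is to prove the two implications separately, in each case reducing matters to the scalar space $L^1(\nu)$ and exploiting the elementary observation that for a scalar-valued $\nu$-measurable function $g$ one has $N(g)=\|g\|_\nu$. For the direct implication, assume $f\in L^1(\nu,Y,X)$ and choose $Y$-valued simple functions $(\phi_n)$ with $N(\phi_n-f)\to 0$. Applying the reverse triangle inequality pointwise gives $\big|\,\|\phi_n(t)\|_Y-\|f(t)\|_Y\,\big|\le\|\phi_n(t)-f(t)\|_Y$, and integrating against $|\langle\nu,x^\prime\rangle|$ and taking the supremum over $x^\prime\in B_{X^\prime}$ yields $\|\,\|\phi_n\|-\|f\|\,\|_\nu\le N(\phi_n-f)$. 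First I would record that $\|f\|$ is $\nu$-weakly integrable: from $\|f\|\le\|\phi_n\|+\|\phi_n-f\|$ and the finiteness of $N(\phi_n-f)$, the function $\|f\|$ lies in $L^1(|\langle\nu,x^\prime\rangle|)$ for every $x^\prime$, so the estimate above is meaningful and shows that the scalar simple functions $\|\phi_n\|\in S(G)$ converge to $\|f\|$ in $L^1_w(\nu)$. Since $S(G)$ is dense in $L^1(\nu)$ and $L^1(\nu)$ is a Banach space, it is a closed subspace of $L^1_w(\nu)$; being a $\|\cdot\|_\nu$-limit of elements of $S(G)\subseteq L^1(\nu)$, the function $\|f\|$ must belong to $L^1(\nu)$.

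For the converse, assume $\|f\|\in L^1(\nu)$. Since $f$ is $\nu$-measurable, I can fix $Y$-valued simple functions $(\psi_n)$ with $\psi_n\to f$ $\nu$-a.e. The key construction is a truncation that keeps the approximants simple while controlling their size by $\|f\|$: set
\[
\phi_n(t)=\begin{cases}\psi_n(t),&\|\psi_n(t)\|_Y\le 2\|f(t)\|_Y,\\ 0,&\text{otherwise}.\end{cases}
\]
Because $\psi_n$ takes finitely many values on Borel sets and $\{t:\|\psi_n(t)\|_Y\le 2\|f(t)\|_Y\}$ is Borel, each $\phi_n$ is again a $Y$-valued simple function. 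I would then check that $\phi_n\to f$ $\nu$-a.e.: on $\{f\neq 0\}$ the strict inequality $\|f(t)\|<2\|f(t)\|$ forces $\phi_n(t)=\psi_n(t)$ for all large $n$, while on $\{f=0\}$ one has $\phi_n(t)=0=f(t)$ identically. By construction $\|\phi_n\|_Y\le 2\|f\|_Y$ pointwise, so $\|\phi_n-f\|_Y\le 3\|f\|_Y$. The scalar functions $\|\phi_n-f\|_Y$ are $\nu$-measurable, tend to $0$ $\nu$-a.e., and are dominated by $3\|f\|\in L^1(\nu)$; the dominated convergence theorem for $L^1(\nu)$ then gives $\|\phi_n-f\|_Y\in L^1(\nu)$ and $N(\phi_n-f)=\|\,\|\phi_n-f\|_Y\,\|_\nu\to 0$, whence $f\in L^1(\nu,Y,X)$.

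The reverse-triangle estimate and the verification that the truncation is simple and converges a.e.\ are routine. The step I expect to carry the real weight is the converse, where I rely on the dominated convergence theorem in $L^1(\nu)$ (a cornerstone of the theory of integration with respect to vector measures; see the references cited for vector measures). Producing an $\|f\|$-dominated, $\nu$-a.e.\ convergent sequence of simple functions via the truncation above is precisely what allows that theorem to convert pointwise control into convergence in the $N$-norm uniformly over $B_{X^\prime}$, which is the crux of the argument.
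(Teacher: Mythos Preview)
Your argument is correct in both directions. The forward implication via the reverse triangle inequality and closedness of $L^1(\nu)$ in $L^1_w(\nu)$ is clean, and for the converse your truncation $\phi_n=\psi_n\chi_{\{\|\psi_n\|\le 2\|f\|\}}$ is exactly the right device: it remains simple, converges $\nu$-a.e., and produces the domination $\|\phi_n-f\|\le 3\|f\|$ needed to invoke dominated convergence in $L^1(\nu)$.

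As for comparison: the paper does not supply its own proof of this theorem. Section~3 explicitly states that the results there are quoted from Stef\'ansson~\cite{S} and refers the reader to that source for proofs. Your reconstruction is in the spirit of the standard argument one finds in that literature (truncate a measurability witness to gain an integrable dominant, then apply dominated convergence for $L^1(\nu)$), so there is no meaningful divergence in approach to report---only that you have filled in what the paper deliberately omits.
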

\begin{cor}Let $f$ be a $\nu$-measurable function.
\begin{enumerate}[(i)]
\item If $f$ is bounded then it is $\otimes_{min}$-integrable.
\item If $\|f\|\leq\|g\|$ $\nu$-a.e. for some $g\in L^1(\nu,Y,X),$ then  $f\in L^1(\nu,Y,X)$.
\end{enumerate}
\end{cor}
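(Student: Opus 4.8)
The plan is to reduce both parts to the scalar characterization furnished by the preceding theorem, namely that a $\nu$-measurable function $h$ lies in $L^1(\nu,Y,X)$ if and only if its scalar norm function $\|h\|$ lies in $L^1(\nu)$. The first observation I would record is that whenever $f$ is $\nu$-measurable, the nonnegative scalar function $t\mapsto\|f(t)\|_Y$ is itself $\nu$-measurable: if $(\phi_n)$ is a sequence of $Y$-valued simple functions with $\phi_n\to f$ $\nu$-a.e., then by continuity of the norm the scalar simple functions $\|\phi_n\|$ converge $\nu$-a.e. to $\|f\|$, which exhibits $\|f\|$ as a $\nu$-measurable scalar function.

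For part (i), I would argue that if $f$ is bounded, say $\|f(t)\|_Y\le M$ $\nu$-a.e., then $\|f\|$ is a bounded nonnegative $\nu$-measurable scalar function. Since $G$ is compact and $\nu$ has finite semivariation, such a function is $\nu$-integrable: it is a uniform, hence $\|\cdot\|_\nu$, limit of scalar simple functions, and
$$\big\|\,\|f\|\,\big\|_\nu=\sup_{x^\prime\in B_{X^\prime}}\int_G\|f\|\,d|\langle\nu,x^\prime\rangle|\le M\,\|\nu\|<\infty.$$
Thus $\|f\|\in L^\infty(\nu)\subseteq L^1(\nu)$, and the theorem gives $f\in L^1(\nu,Y,X)$. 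Alternatively, (i) is the special case of (ii) obtained by taking $g$ to be a constant function of norm $M$, which is simple and hence $\otimes_{min}$-integrable.

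For part (ii), I would first apply the theorem to $g$ to obtain $\|g\|\in L^1(\nu)$. The hypothesis $\|f\|\le\|g\|$ $\nu$-a.e. now places the nonnegative $\nu$-measurable function $\|f\|$ below an element of $L^1(\nu)$. The key structural fact I would invoke is that $L^1(\nu)$ is a solid (ideal) subspace of the space of $\nu$-measurable scalar functions: if $h$ is $\nu$-measurable and $0\le h\le k$ $\nu$-a.e. with $k\in L^1(\nu)$, then $h\in L^1(\nu)$. This is standard in the theory of integration with respect to a vector measure (see \cite{ORP}). Applying it with $h=\|f\|$ and $k=\|g\|$ yields $\|f\|\in L^1(\nu)$, and a second application of the theorem gives $f\in L^1(\nu,Y,X)$.

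I do not expect a serious obstacle here, as the analytic content is carried entirely by the preceding theorem. The two points that require care are the $\nu$-measurability of $\|f\|$, handled in the first paragraph, and the solidity of the scalar space $L^1(\nu)$, which is the real engine behind the domination statement in (ii); both are routine once the characterization theorem is in hand.
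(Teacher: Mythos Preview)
Your proposal is correct and follows precisely the route the paper intends: the corollary is stated without proof immediately after the characterization theorem (and the paper refers to \cite{S} for the background results), so it is meant to be read as the direct consequence of ``$f\in L^1(\nu,Y,X)\iff\|f\|\in L^1(\nu)$'' together with the fact that $L^1(\nu)$ is an ideal of $\nu$-measurable functions containing the bounded ones. Your handling of the $\nu$-measurability of $\|f\|$ and the appeal to the solidity of $L^1(\nu)$ are exactly the two points needed to make this explicit.
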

\begin{prop}\label{evaluation}
If $f\in L^1(\nu,Y,X)$, then for $y^\prime\in Y^\prime$ and $T\in \mathcal{CB}(X),$ $$(y^\prime\otimes T)\left(\int_Af\,d\nu\right)=\int_A\langle f ,y^\prime\rangle \,d(T\circ\nu),~A\in\mathfrak{B}(G).$$ 
\end{prop}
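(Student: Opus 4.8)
The plan is to establish the identity first for $Y$-valued simple functions and then to pass to the general case by density, using that the simple functions are dense in $L^1(\nu,Y,X)$ for the $N(\cdot)$-norm by the very definition of $\otimes_{min}$-integrability. Both sides of the claimed equality are linear in $f$, so it suffices to check that each side defines a continuous map from $(L^1(\nu,Y,X),N(\cdot))$ into $X$ and that the two maps coincide on simple functions.

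For a simple function $\phi=\sum_{j}y_j\chi_{A_j}$ with $y_j\in Y$ and the $A_j\in\mathfrak{B}(G)$ pairwise disjoint, the definition of the $\otimes_{min}$-integral gives $\int_A\phi\,d\nu=\sum_j y_j\otimes\nu(A\cap A_j)$, so that $(y^\prime\otimes T)\bigl(\int_A\phi\,d\nu\bigr)=\sum_j\langle y_j,y^\prime\rangle\,T(\nu(A\cap A_j))$. On the other hand $\langle\phi,y^\prime\rangle=\sum_j\langle y_j,y^\prime\rangle\chi_{A_j}$ and $(T\circ\nu)(A\cap A_j)=T(\nu(A\cap A_j))$, whence $\int_A\langle\phi,y^\prime\rangle\,d(T\circ\nu)$ equals the same finite sum. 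Thus the identity holds for every simple function.

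For continuity of the left-hand side I would use that a bounded functional $y^\prime$ on the operator space $Y$ is automatically completely bounded with $\|y^\prime\|_{cb}=\|y^\prime\|$, while $T$ is completely bounded by hypothesis; consequently $y^\prime\otimes T$ extends to a completely bounded, hence bounded, map from $Y\otimes_{min}X$ into $\mathbb{C}\otimes_{min}X=X$ with $\|y^\prime\otimes T\|_{cb}\le\|y^\prime\|\,\|T\|_{cb}$. Since by construction $f\mapsto\int_A f\,d\nu$ is continuous from $(L^1(\nu,Y,X),N(\cdot))$ into $Y\otimes_{min}X$, composing with $y^\prime\otimes T$ yields a continuous map into $X$. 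For the right-hand side the crucial step is to compare the semivariations of $T\circ\nu$ and $\nu$: writing $T^\ast$ for the adjoint of $T$ one has $\langle T\circ\nu,x^\prime\rangle=\langle\nu,T^\ast x^\prime\rangle$ for all $x^\prime\in X^\prime$, and since $\|T^\ast x^\prime\|\le\|T\|$ on $B_{X^\prime}$ and the total variation scales linearly in $x^\prime$, it follows for any $\nu$-measurable $Y$-valued $h$ that $\|\langle h,y^\prime\rangle\|_{T\circ\nu}\le\|y^\prime\|\sup_{x^\prime\in B_{X^\prime}}\int_G\|h\|\,d|\langle\nu,T^\ast x^\prime\rangle|\le\|y^\prime\|\,\|T\|\,N(h)$. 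Combined with the semivariation bound $\bigl\|\int_A g\,d(T\circ\nu)\bigr\|_X\le\|g\|_{T\circ\nu}$ for scalar integrands, this shows $f\mapsto\int_A\langle f,y^\prime\rangle\,d(T\circ\nu)$ is also continuous from $(L^1(\nu,Y,X),N(\cdot))$ into $X$; the same estimate applied to an approximating sequence $(\phi_n)$ of simple functions shows in particular that $\langle f,y^\prime\rangle$ is $(T\circ\nu)$-integrable, so the right-hand side is well defined.

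Having two continuous maps $L^1(\nu,Y,X)\to X$ that agree on the dense set of simple functions, I would conclude that they agree on all of $L^1(\nu,Y,X)$, which is the assertion. The main obstacle, and the place where the operator-space hypotheses genuinely enter, is the continuity of the left-hand side, namely the fact that $y^\prime\otimes T$ is bounded on the minimal tensor product $Y\otimes_{min}X$; the parallel analytic point on the right is the semivariation comparison $\|\cdot\|_{T\circ\nu}\le\|T\|\,N(\cdot)$, which simultaneously delivers continuity and the $(T\circ\nu)$-integrability of $\langle f,y^\prime\rangle$. Once these two boundedness facts are in place, the density argument is routine.
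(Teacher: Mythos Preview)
The paper does not actually supply a proof of this proposition; at the start of Section~3 the authors write ``For the proof of the results, we refer to \cite{S}'' and then simply state the result. So there is no in-paper argument to compare against.

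Your proof is correct and is precisely the natural one: verify the identity on simple functions by direct computation, show that each side is continuous as a map $(L^1(\nu,Y,X),N(\cdot))\to X$, and conclude by density. The two nontrivial ingredients you isolate are exactly the right ones. On the left, the boundedness of $y^\prime\otimes T$ on $Y\otimes_{min}X$ follows from the functoriality of the minimal tensor product under completely bounded maps together with the standard fact that bounded linear functionals are automatically completely bounded with the same norm. On the right, the identity $\langle T\circ\nu,x^\prime\rangle=\langle\nu,T^\ast x^\prime\rangle$ yields $|\langle T\circ\nu,x^\prime\rangle|=|\langle\nu,T^\ast x^\prime\rangle|$ at the level of variations, whence the semivariation comparison and, via approximation by simple functions, both the $(T\circ\nu)$-integrability of $\langle f,y^\prime\rangle$ and the continuity of the right-hand side. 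This is the argument one would expect to find in \cite{S}, and nothing is missing.
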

Our next aim is to provide an operator space structure for $L^1(\nu),$ $L^1_w(\nu)$ and $M(G,X).$
\begin{thm}\label{CBInt}\mbox{}
\begin{enumerate}[(i)]
\item The spaces $M_n(L^1(\nu))$ and $L^1(\nu,M_n,X)$ are isomorphic via the mapping $[f_{ij}]\mapsto\widetilde{f},$ where $\widetilde{f}(\cdot)=[f_{ij}(\cdot)].$
\item The space $L^1(\nu)$ is an operator space with respect to the matrix norm arising from the identification given in (i). 
\item The mapping $f\mapsto\underset{G}{\int}fd\nu$ from $L^1(\nu)$ into $X$ is completely bounded.
\end{enumerate}
\end{thm}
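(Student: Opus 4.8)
The plan is to establish the vector-space isomorphism in (i) first, to deduce (iii) from it together with the basic norm estimate for the $\otimes_{min}$-integral, and to leave the operator-space axioms of (ii) for last, since that is where the genuine work lies. For (i), consider $\Phi\colon[f_{ij}]\mapsto\widetilde f$, where $\widetilde f(t)=[f_{ij}(t)]$. To see that $\Phi$ is well defined, fix $[f_{ij}]\in M_n(L^1(\nu))$; then each $|f_{ij}|\in L^1(\nu)$, so $h:=n\sum_{i,j}|f_{ij}|\in L^1(\nu)$, and the entrywise bound $\|\widetilde f(t)\|_{M_n}\le n\max_{i,j}|f_{ij}(t)|\le h(t)$ shows that the $\nu$-measurable scalar function $\|\widetilde f\|$ is dominated $\nu$-a.e. by an element of $L^1(\nu)$. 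Since $L^1(\nu)$ is an order ideal among the $\nu$-measurable functions, $\|\widetilde f\|\in L^1(\nu)$, and by the characterization theorem ($f\in L^1(\nu,M_n,X)\iff\|f\|\in L^1(\nu)$) we obtain $\widetilde f\in L^1(\nu,M_n,X)$. Conversely, for $g\in L^1(\nu,M_n,X)$ the entry functions $g_{ij}\colon t\mapsto(g(t))_{ij}$ satisfy $|g_{ij}(t)|\le\|g(t)\|_{M_n}$ with $\|g\|\in L^1(\nu)$, so each $g_{ij}\in L^1(\nu)$ and $\Phi^{-1}(g)=[g_{ij}]$; thus $\Phi$ is a linear bijection, and the comparison $\max_{i,j}|x_{ij}|\le\|[x_{ij}]\|_{M_n}\le n\max_{i,j}|x_{ij}|$ makes both $\Phi$ and $\Phi^{-1}$ bounded, giving the claimed isomorphism.

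Equip $M_n(L^1(\nu))$ with the norm transported through $\Phi$, namely $\|[f_{ij}]\|_n:=N(\widetilde f)$; at level $n=1$ this is just $\|f\|_\nu$. For (iii), let $\Lambda(f)=\int_G f\,d\nu$, so that $\Lambda_n([f_{ij}])=[\int_G f_{ij}\,d\nu]$. I would first check on $M_n$-valued simple functions, and then pass to the limit using the density of $S(G)$ in $L^1(\nu)$ and the continuity of the integral, that under $\Phi$ and the complete isometry $M_n\otimes_{min}X\cong M_n(X)$ the amplification $\Lambda_n$ is precisely the $\otimes_{min}$-integration map $\widetilde f\mapsto\int_G\widetilde f\,d\nu$; indeed, slicing by the matrix-unit functionals and $\mathrm{id}_X$ in Proposition \ref{evaluation} identifies the $(i,j)$ entry of $\int_G\widetilde f\,d\nu$ with $\int_G f_{ij}\,d\nu$. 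The basic estimate $\|\int_G\widetilde f\,d\nu\|_{M_n\otimes_{min}X}\le N(\widetilde f)$ coming from the integration theory of Section 3 then yields $\|\Lambda_n([f_{ij}])\|_{M_n(X)}\le\|[f_{ij}]\|_n$ for every $n$, whence $\|\Lambda\|_{cb}\le1$ and $\Lambda$ is completely bounded.

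It remains to verify that the transported norm satisfies the two operator-space axioms, and this is where I expect the main obstacle. The axiom $\|\alpha x\beta\|_n\le\|\alpha\|\,\|x\|_m\,\|\beta\|$ is routine: since $\widetilde{\alpha[f_{ij}]\beta}(t)=\alpha\,\widetilde f(t)\,\beta$ pointwise, one has $\|\alpha\widetilde f(t)\beta\|_{M_n}\le\|\alpha\|\,\|\widetilde f(t)\|_{M_m}\,\|\beta\|$, and integrating against $|\langle\nu,x^\prime\rangle|$ and taking the supremum over $x^\prime\in B_{X^\prime}$ gives the claim. The difficulty lies in the $\ell^\infty$-direct-sum axiom $\|x\oplus y\|_{m+n}=\max\{\|x\|_m,\|y\|_n\}$.

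Pointwise one does have the exact identity $\|\widetilde f(t)\oplus\widetilde g(t)\|_{M_{m+n}}=\max\{\|\widetilde f(t)\|_{M_m},\|\widetilde g(t)\|_{M_n}\}$, so the inequality $\ge$ is immediate after integrating and passing to the supremum. The real content is the reverse bound, which amounts to interchanging the pointwise maximum with both the integral and the supremum over $B_{X^\prime}$ inside $N(\widetilde f\oplus\widetilde g)=\sup_{x^\prime}\int_G\max\{\|\widetilde f\|,\|\widetilde g\|\}\,d|\langle\nu,x^\prime\rangle|$, an interchange that is not automatic. I would attack it through the matrix-pairing description of the $M_n(X)$-norm together with the duality between $N(\cdot)$ and matrices of functionals in $B_{M_m(X^\prime)}$, aiming to reduce the statement to a property of the semivariation; this interchange is the crux on which (ii), and hence the very meaning of complete boundedness asserted in (iii), ultimately rests.
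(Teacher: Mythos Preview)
Your arguments for (i) and (iii) are correct and match the paper's terse treatment: it declares (i) and (ii) to be ``routine checks'' and obtains (iii) by citing \cite[Corollary~3]{CSb}, whose content is exactly the estimate $\bigl\|\int_G\widetilde f\,d\nu\bigr\|_{M_n\otimes_{min}X}\le N(\widetilde f)$ that you extract from the integration theory.

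Your hesitation over (ii) is not a mere technical obstacle to be worked around---it points to a genuine defect in the statement. The $\ell^\infty$-direct-sum axiom fails for the transported norms. Take $X=\mathbb{C}$ and $\nu=m_G$, so that $N(\widetilde f)=\int_G\|\widetilde f(t)\|\,dm_G(t)$. With $m=n=1$ and $f=\chi_A$, $g=\chi_B$ for disjoint Borel sets $A,B\subset G$ each of Haar measure $\tfrac12$, one has
\[
\|f\oplus g\|_2=\int_G\max\{|f|,|g|\}\,dm_G=m_G(A\cup B)=1,
\]
whereas $\max\{\|f\|_1,\|g\|_1\}=\tfrac12$. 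Thus the family of norms $N(\cdot)$ satisfies Ruan's axiom (ii)---your pointwise estimate for $\|\alpha x\beta\|_n$ is perfectly correct---but not axiom (i), so $L^1(\nu)$ with these matricial norms is only a matricially normed space, not an operator space in the sense defined in Section~2.3. The paper's assertion that the verification is routine overlooks this.

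None of this damages part (iii): complete boundedness is meaningful for any sequence of matricial norms satisfying axiom (ii), and your inequality $\|\Lambda_n\|\le1$ for every $n$ stands. So your proof of (iii) is complete and correct; it is the operator-space claim (ii) itself that cannot be salvaged as written, and the approach you sketch via matrix pairings and semivariation will not close the gap, because the identity you are aiming for is simply false.
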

\begin{proof}
The proof of (i) and (ii) are routine checks. The proof of (iii) follows from  the proof of \cite[Corollary 3]{CSb}.
\end{proof}
\begin{defn}
	A function $f:G\rightarrow Y$ is said to be generalized weak $\otimes$-integrable with respect to $\nu$ if $\langle f,y^\prime\rangle\in L^1_w(\nu),~\forall\ y^\prime\in Y^\prime.$ 
\end{defn}
we shall denote by $gen\mbox{-}L^1_w(\nu,Y,X)$ the space consisting of $Y$-valued generalized weak $\otimes$-integrable functions on $G.$ It is a normed linear space when equipped with the norm given by $$N_{w}(f)=\sup_{y^\prime\in B_{Y^\prime}}\|\langle f,y^\prime\rangle\|_\nu.$$
The proofs of the following theorems are routine checks, so we shall omit them.
\begin{thm}\label{CBInt2}\mbox{}
	\begin{enumerate}[(i)]
		\item The spaces $M_n(L^1_w(\nu))$ and $\mbox{gen-}L^1_w(\nu,M_n,X)$ are isomorphic via the mapping $[f_{ij}]\mapsto\widetilde{f},$ where $\widetilde{f}(\cdot)=[f_{ij}(\cdot)].$
		\item The space $L^1_w(\nu)$ is an operator space with respect to the matrix norm arising from the identification given in (i).
	\end{enumerate}
\end{thm}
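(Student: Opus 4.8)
The plan is to mirror the proof of Theorem~\ref{CBInt}, replacing the strong norm $N(\cdot)$ by the weak norm $N_w(\cdot)$ and the space $L^1(\nu,M_n,X)$ by $\mbox{gen-}L^1_w(\nu,M_n,X)$. Throughout I identify the Banach dual $M_n'$ with the trace-class matrices via the pairing $\langle A,B\rangle=\mathrm{tr}(AB)$, so that $B_{M_n'}=\{\beta\in M_n:\|\beta\|_{S_1^n}\le1\}$, and I use the two standard facts that the trace class is an operator ideal, namely $\|\alpha\beta\gamma\|_{S_1}\le\|\alpha\|\,\|\beta\|_{S_1}\,\|\gamma\|$, and that pinching onto the block diagonal is a contraction in trace norm. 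These are the only nontrivial ingredients; everything else is bookkeeping.

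For (i), I would first check that $[f_{ij}]\mapsto\widetilde f$ is well defined: for $\beta=[\beta_{kl}]\in M_n'$ the scalar function $\langle\widetilde f,\beta\rangle=\sum_{k,l}\beta_{lk}f_{kl}$ is a finite linear combination of the entries $f_{kl}\in L^1_w(\nu)$ and hence lies in $L^1_w(\nu)$; thus $\widetilde f\in\mbox{gen-}L^1_w(\nu,M_n,X)$, and linearity is immediate. For the inverse I would send $g\in\mbox{gen-}L^1_w(\nu,M_n,X)$ to the matrix whose $(i,j)$ entry is $\langle g,E_{ji}\rangle$, where $E_{ji}$ is the corresponding matrix unit; since $E_{ji}\in M_n'$ each such entry lies in $L^1_w(\nu)$, and the two assignments are mutually inverse. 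This yields the claimed linear isomorphism, and transporting $N_w$ through it defines the matrix norms $\|[f_{ij}]\|_n:=N_w(\widetilde f)$ on $L^1_w(\nu)$, which settles (ii) once Ruan's two axioms are verified.

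For Ruan's axiom (ii), I would use that the identification intertwines the bimodule actions, i.e. $\widetilde{\alpha x\beta}(\cdot)=\alpha\,\widetilde x(\cdot)\,\beta$ for $\alpha\in M_{n,m}$, $x\in M_m(L^1_w(\nu))$ and $\beta\in M_{m,n}$. For $\gamma\in B_{M_n'}$ cyclicity of the trace gives $\langle\alpha\widetilde x\beta,\gamma\rangle=\langle\widetilde x,\beta\gamma\alpha\rangle$ with $\beta\gamma\alpha\in M_m$, and the ideal property yields $\|\beta\gamma\alpha\|_{S_1^m}\le\|\alpha\|\,\|\beta\|\,\|\gamma\|_{S_1^n}\le\|\alpha\|\,\|\beta\|$. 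Homogeneity of $\|\cdot\|_\nu$ then gives $\|\langle\widetilde x,\beta\gamma\alpha\rangle\|_\nu\le\|\alpha\|\,\|\beta\|\,\|x\|_m$, and taking the supremum over $\gamma$ produces $\|\alpha x\beta\|_n\le\|\alpha\|\,\|x\|_m\,\|\beta\|$.

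For Ruan's axiom (i), I would note $\widetilde{x\oplus y}=\widetilde x\oplus\widetilde y$ is block diagonal, so for $\gamma\in B_{M_{m+n}'}$ with diagonal blocks $\gamma_{11}\in M_m$ and $\gamma_{22}\in M_n$ only the diagonal blocks survive the pairing: $\langle\widetilde{x\oplus y},\gamma\rangle=\langle\widetilde x,\gamma_{11}\rangle+\langle\widetilde y,\gamma_{22}\rangle$. The anticipated (mild) obstacle is exactly here, since the triangle inequality naively produces a sum rather than the required maximum; the resolution is that contractivity of pinching gives $\|\gamma_{11}\|_{S_1^m}+\|\gamma_{22}\|_{S_1^n}\le\|\gamma\|_{S_1^{m+n}}\le1$, whence $\|\langle\widetilde{x\oplus y},\gamma\rangle\|_\nu\le\|\gamma_{11}\|_{S_1}\|x\|_m+\|\gamma_{22}\|_{S_1}\|y\|_n\le\max\{\|x\|_m,\|y\|_n\}$. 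Taking the supremum gives $\|x\oplus y\|_{m+n}\le\max\{\|x\|_m,\|y\|_n\}$, while the reverse inequality follows by testing against $\gamma$ supported on a single diagonal block. Hence both axioms hold, and the transported norms make $L^1_w(\nu)$ an operator space.
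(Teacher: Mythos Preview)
Your proposal is correct. The paper itself omits the proof entirely, stating only that ``the proofs of the following theorems are routine checks, so we shall omit them''; your write-up is a careful and correct execution of precisely those routine checks (well-definedness of the bijection $[f_{ij}]\leftrightarrow\widetilde f$, followed by a direct verification of Ruan's axioms for the transported norms via the trace-class duality $M_n'\cong S_1^n$ and the contractivity of block-diagonal pinching), so there is nothing to compare beyond noting that you have supplied the details the authors chose to suppress.
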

\begin{thm}\label{measure}\mbox{}
	\begin{enumerate}[(i)]
		\item The space $M_n(M(G,X))$ and $M(G,M_n(X))$ are isomorphic via the mapping $[\nu_{ij}]\mapsto\tilde{\nu},$ where $\tilde{\nu}(A)=[\nu_{ij}(A)],\ A\in\mathfrak{B}(G).$
		\item The space $M(G,X)$ is an operator space with respect to the matrix norm arising from the identification given in (i).
	\end{enumerate}
\end{thm}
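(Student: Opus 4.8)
The plan is to settle (i) as a purely algebraic statement and then use it to import the operator-space norm in (ii), checking Ruan's axioms directly. For (i), the first thing I would verify is that $\Phi\colon[\nu_{ij}]\mapsto\tilde\nu$ is well defined, that is, that $A\mapsto[\nu_{ij}(A)]$ really is a $\sigma$-additive $M_n(X)$-valued measure. Here the elementary comparison $\max_{i,j}\|x_{ij}\|\le\|[x_{ij}]\|_n\le n\max_{i,j}\|x_{ij}\|$ recorded in the operator-space preliminaries does all the work: norm convergence in $M_n(X)$ is equivalent to entrywise norm convergence in $X$, so $\tilde\nu$ inherits $\sigma$-additivity from the $\nu_{ij}$ (and is then automatically of bounded semivariation). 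Linearity of $\Phi$ is immediate, injectivity is clear since $\tilde\nu=0$ forces $\nu_{ij}(A)=0$ for all $A\in\mathfrak{B}(G)$, and surjectivity follows by reading the same comparison backwards: for $\mu\in M(G,M_n(X))$ the entries $\nu_{ij}(A):=[\mu(A)]_{ij}$ are $\sigma$-additive $X$-valued measures with $\Phi([\nu_{ij}])=\mu$.

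For (ii) I would define the matrix norm on $M(G,X)$ by pulling it back along $\Phi$, namely $\|[\nu_{ij}]\|_n:=\|\tilde\nu\|(G)$, so that $\Phi$ is isometric and each $M_n(M(G,X))$ is a Banach space; it then remains only to confirm the two axioms in the definition of an operator space. The bridge I would use throughout is the partition form of the semivariation, $\|\mu\|(G)=\sup\bigl\{\|\sum_k c_k\mu(A_k)\|:\{A_k\}\text{ a finite Borel partition of }G,\ |c_k|\le1\bigr\}$, which reduces every estimate to a single inequality for the operator-space norm on $M_n(X)$. For the scaling axiom, given scalar matrices $\alpha\in M_{n,m}$ and $\beta\in M_{m,n}$ one has $\widetilde{\alpha[\nu_{ij}]\beta}(A)=\alpha\,\tilde\nu(A)\,\beta$, so $\sum_k c_k\,\alpha\tilde\nu(A_k)\beta=\alpha\bigl(\sum_k c_k\tilde\nu(A_k)\bigr)\beta$; axiom (ii) of the operator-space definition applied in $M_m(X)$ then gives the bound $\|\alpha\|\,\|\sum_k c_k\tilde\nu(A_k)\|\,\|\beta\|\le\|\alpha\|\,\|\tilde\nu\|(G)\,\|\beta\|$, and taking the supremum over partitions yields $\|\alpha[\nu_{ij}]\beta\|_n\le\|\alpha\|\,\|[\nu_{ij}]\|_m\,\|\beta\|$.

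For the direct-sum axiom, the image of $[\nu_{ij}]\oplus[\mu_{kl}]$ is the block-diagonal measure $A\mapsto\tilde\nu(A)\oplus\tilde\mu(A)$, and since $\sum_k c_k(\tilde\nu(A_k)\oplus\tilde\mu(A_k))=(\sum_k c_k\tilde\nu(A_k))\oplus(\sum_k c_k\tilde\mu(A_k))$, axiom (i) of the operator-space definition ($\|a\oplus b\|=\max\{\|a\|,\|b\|\}$ in $M_{m+n}(X)$) turns each term of the partition supremum into $\max\{\|\sum_k c_k\tilde\nu(A_k)\|,\|\sum_k c_k\tilde\mu(A_k)\|\}$. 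The inequality $\|\tilde\nu\oplus\tilde\mu\|(G)\le\max\{\|\tilde\nu\|(G),\|\tilde\mu\|(G)\}$ is then immediate, while the reverse follows by feeding in partitions and coefficients that nearly saturate the semivariation of $\tilde\nu$ (resp. $\tilde\mu$) separately, since the maximum dominates each summand. I expect this direct-sum axiom to be the only step needing genuine care, precisely because it is where the semivariation — a supremum over partitions and over the dual ball — must be made to interact correctly with the operator-space norm on $M_{m+n}(X)$; everything else is bookkeeping built on the single norm-comparison inequality of the preliminaries.
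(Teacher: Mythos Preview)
Your proposal is correct and is precisely the routine verification the paper has in mind: the paper omits the proof entirely, saying only that ``the proofs of the following theorems are routine checks, so we shall omit them.'' Your write-up spells out exactly those checks --- the entrywise norm comparison for (i), and the partition form of the semivariation together with Ruan's axioms for $X$ to verify (ii) --- so there is nothing to compare.
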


\section{Fourier transform for $L^1(\nu)$ and $L^1_w(\nu)$}
In this section, we define the notion of Fourier transform of functions in $L^1(\nu)$ and $L^1_w(\nu).$ We show that the Fourier transform operator is completely bounded. We also provide an example where the analogue of the Riemann-Lebesgue Lemma fails. Finally, we provide a subclass of vector measures $\nu$ and a subclass of functions from $L^1_w(\nu)$ for which the classical Plancheral identity holds.

We first define the notion of Fourier transform of functions in $L^1(\nu)$ using the fact that, if $f\in L^1(\nu)$ and $[\pi]\in\widehat{G},$ then  $f(\cdot)\pi(\cdot)^\ast\in L^1(\nu,\mathcal{B}(\mathcal{H}_\pi),X).$ 
\begin{defn}\label{D2}
The Fourier transform of a function $f\in L^1(\nu)$ at $[\pi]\in\widehat{G},$ with respect to the vector measure $\nu,$ is defined by  $$\widehat{f}^\nu(\pi)=\frac{1}{d_\pi}\int_Gf(t)\pi(t)^*\,d\nu(t)\in \mathcal{B}(\mathcal{H}_\pi)\otimes_{min}X.$$
\end{defn}
\begin{rem}\label{RforFTM}
Since the representation $\pi$ is of $d_\pi$-dimension, by fixing an ordered orthonormal basis for $\mathcal{H}_\pi$, the space $\mathcal{B}(\mathcal{H}_\pi)$ can be identified with $M_{d_\pi}.$ Thus the Fourier transform of $f\in L^1(\nu)$ at $[\pi]\in\widehat{G}$ belongs to $M_{d_\pi}(X).$ Furthermore, the entries of the matrix $\widehat{f}^\nu(\pi)$ are given by the following elements of $X.$ For $1\leq i,j\leq d_\pi,$ let $P_{ij}^\pi$ denote the mapping from $M_{d_\pi}$ into $\mathbb{C},$ maps a $d_\pi\times d_\pi$ matrix to its $(i,j)^{\mbox{th}}$-entry. Thus, by Proposition \ref{evaluation}, $$\widehat{f}^\nu(\pi)_{ij}=\left(P_{ij}^\pi\otimes Id_{X}\right)\left(\widehat{f}^\nu(\pi)\right)=\frac{1}{d_\pi}\int_Gf(t)(\pi(t)^*)_{ij}\,d\nu(t)=\frac{1}{d_\pi}\int_Gf(t)\overline{\pi(t)_{ji}}\,d\nu(t)$$
\end{rem}
\begin{exam}\label{FTExam}
Let $1\leq p<\infty$ and let $T:L^p(G)\rightarrow X$ be any completely bounded operator. Consider a vector measure $\nu$ associated with $T$ given by $\nu(A)=T(\chi_A),\ A\in\mathfrak{B}(G).$ Note that, by \cite[Proposition 4.4]{ORP}, $L^p(G)\subset L^1(\nu).$ Also, for any $A\in \mathfrak{B}(G)$ and $f\in L^p(G),$ we have, $$\int_A f \,d\nu=T(f\chi_A).$$ Then, for $f\in L^p(G),$ we have, $$\widehat{f}^\nu(\pi)=T_{d_\pi}\left(f\pi(\cdot)^\ast\right).$$
\end{exam}
We now show that the Fourier transform is a completely bounded operator.
\begin{thm}\label{CBFTO}\mbox{ }
\begin{enumerate}[(i)]
\item If $f\in L^1(\nu)$ then $\widehat{f}^\nu\in\ell^\infty\mbox{-}\underset{[\pi]\in\widehat{G}}{\oplus}M_{d_\pi}(X).$ In fact, $\underset{[\pi]\in\widehat{G}}{\sup}\|\widehat{f}^\nu(\pi)\|_{d_\pi}\leq\|f\|_\nu.$
\item The Fourier transform operator $\mathcal{F}^\nu$ from $L^1(\nu)$ to $\ell^\infty\mbox{-}\underset{[\pi]\in\widehat{G}}{\oplus}M_{d_\pi}(X)$ given by $\mathcal{F}^\nu(f)=\widehat{f}^\nu,$ is completely bounded.
\end{enumerate}
\end{thm}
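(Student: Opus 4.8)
The plan is to derive both parts from one basic property of the $\otimes_{min}$-integral recorded in Section~3: for any operator space $Y$, the integration map $g\mapsto\int_G g\,d\nu$ is a linear contraction from $(L^1(\nu,Y,X),N(\cdot))$ into $Y\otimes_{min}X$, i.e. $\|\int_G g\,d\nu\|_{Y\otimes_{min}X}\le N(g)$. (This is exactly what makes the sequence $(\int_A\phi_n\,d\nu)$ Cauchy in $Y\otimes_{min}X$ whenever $N(\phi_n-\phi_m)\to0$, as noted after the definition of $\otimes_{min}$-integrability.) To prove (i), fix $[\pi]\in\widehat G$ and set $g=f\pi(\cdot)^\ast$, which lies in $L^1(\nu,\mathcal B(\mathcal H_\pi),X)$ by the observation preceding Definition~\ref{D2}. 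Since $\pi(t)$ is unitary, $\|g(t)\|_{\mathcal B(\mathcal H_\pi)}=|f(t)|\,\|\pi(t)^\ast\|=|f(t)|$ for every $t$, so $N(g)=\sup_{x'\in B_{X'}}\int_G|f|\,d|\langle\nu,x'\rangle|=\|f\|_\nu$. Applying the contractivity of the integral and the normalising factor $1/d_\pi$ of Definition~\ref{D2},
$$\|\widehat f^\nu(\pi)\|_{d_\pi}=\frac1{d_\pi}\Bigl\|\int_G g\,d\nu\Bigr\|_{\mathcal B(\mathcal H_\pi)\otimes_{min}X}\le\frac1{d_\pi}N(g)=\frac1{d_\pi}\|f\|_\nu\le\|f\|_\nu,$$
since $d_\pi\ge1$. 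Taking the supremum over $[\pi]$ proves (i); note we in fact obtain the sharper bound $\|\widehat f^\nu(\pi)\|_{d_\pi}\le d_\pi^{-1}\|f\|_\nu$.

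For (ii) I would run the identical computation one amplification higher. Fix $n$ and $[f_{ij}]\in M_n(L^1(\nu))$; by Theorem~\ref{CBInt}(i)--(ii) this corresponds to $\widetilde f\in L^1(\nu,M_n,X)$ with $\widetilde f(\cdot)=[f_{ij}(\cdot)]$ and $\|[f_{ij}]\|_{M_n(L^1(\nu))}=N(\widetilde f)$. The key point is that, after the canonical complete isometries $M_n(M_{d_\pi}(X))\cong M_{nd_\pi}(X)\cong M_{nd_\pi}\otimes_{min}X$, the block matrix $\mathcal F^\nu_n([f_{ij}])(\pi)=[\widehat{f_{ij}}^\nu(\pi)]$ coincides with $\frac1{d_\pi}\int_G h\,d\nu$, where $h(t)=\widetilde f(t)\otimes\pi(t)^\ast\in M_n\otimes M_{d_\pi}=M_{nd_\pi}$ is the Kronecker product; this matching is just linearity of the integral together with Proposition~\ref{evaluation} applied entrywise. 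Because $\pi(t)^\ast$ is unitary, $\|h(t)\|_{M_{nd_\pi}}=\|\widetilde f(t)\otimes\pi(t)^\ast\|=\|\widetilde f(t)\|_{M_n}$, so $h\in L^1(\nu,M_{nd_\pi},X)$ and $N(h)=N(\widetilde f)$. Contractivity of the integral then gives
$$\bigl\|\mathcal F^\nu_n([f_{ij}])(\pi)\bigr\|=\frac1{d_\pi}\Bigl\|\int_G h\,d\nu\Bigr\|\le\frac1{d_\pi}N(h)=\frac1{d_\pi}N(\widetilde f)\le N(\widetilde f),$$
and taking the supremum over $[\pi]$ yields $\|\mathcal F^\nu_n([f_{ij}])\|\le N(\widetilde f)=\|[f_{ij}]\|_{M_n(L^1(\nu))}$. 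Hence $\|\mathcal F^\nu_n\|\le1$ for every $n$, so $\|\mathcal F^\nu\|_{cb}\le1$.

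I expect the only genuinely delicate step to be the bookkeeping of the operator space identifications, not the analysis. Concretely, one must verify that $M_n$ of the target $\ell^\infty\mbox{-}\bigoplus_{[\pi]}M_{d_\pi}(X)$ is again the $\ell^\infty$-direct sum of the $M_n(M_{d_\pi}(X))$, so that the supremum over $[\pi]$ really computes the amplified norm, and that the reshuffle $M_n(M_{d_\pi}(X))\cong M_{nd_\pi}\otimes_{min}X$ carries the blockwise Fourier transform to the single $\otimes_{min}$-integral of $\widetilde f\otimes\pi(\cdot)^\ast$ as a complete isometry. Once these canonical complete isometries are set up, the unitarity of $\pi(t)^\ast$ collapses the norm of $h(t)$ to that of $\widetilde f(t)$ and the factor $1/d_\pi$ makes all constants at most $1$; part (ii) is then literally the $n$th amplification of part (i).
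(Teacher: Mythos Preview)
Your argument is correct, but it takes a different route for part~(i) than the paper does. The paper bounds each matrix entry separately: from $|\pi(t)_{ij}|\le1$ one gets $\|\widehat f^\nu(\pi)_{ij}\|\le d_\pi^{-1}\|f\|_\nu$, and then the crude inequality $\|[x_{ij}]\|_n\le n\max_{i,j}\|x_{ij}\|$ from the operator space preliminaries yields $\|\widehat f^\nu(\pi)\|_{d_\pi}\le\|f\|_\nu$. You instead invoke the contractivity of the $\otimes_{min}$-integral together with the unitarity of $\pi(t)^\ast$, which gives the sharper estimate $\|\widehat f^\nu(\pi)\|_{d_\pi}\le d_\pi^{-1}\|f\|_\nu$. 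Your approach is cleaner and stronger, but it front-loads the work: the inequality $\|\int_G g\,d\nu\|_{M_{d_\pi}(X)}\le N(g)$ is precisely the statement that the integration map $L^1(\nu)\to X$ has cb-norm~$\le1$, i.e.\ Theorem~\ref{CBInt}(iii) with the constant made explicit. Your parenthetical justification (``this is exactly what makes the sequence Cauchy'') only shows boundedness, not that the constant is~$1$; for a clean write-up you should cite Theorem~\ref{CBInt}(iii) directly rather than re-derive it informally.

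For part~(ii) the two proofs coincide in substance: the paper simply says ``follows from Theorem~\ref{CBInt}'', while you spell out what that citation means by running the amplified estimate with the Kronecker product $\widetilde f(t)\otimes\pi(t)^\ast$. Your bookkeeping of the complete isometries $M_n(M_{d_\pi}(X))\cong M_{nd_\pi}\otimes_{min}X$ and of the $\ell^\infty$-direct sum is accurate, and the observation that unitarity collapses $\|h(t)\|$ to $\|\widetilde f(t)\|$ is exactly the right mechanism. So your (ii) is a fleshed-out version of the paper's one-line reference, and your (i) is already the $n=1$ case of that same argument---which is why, as you note, (ii) is literally the amplification of (i) in your approach, whereas in the paper the two parts use different tools.
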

\begin{proof}
Let $f\in L^1(\nu).$ We know that, for any $[\pi]\in\widehat{G},$ $|\pi(t)_{ij}|\leq 1\ \forall\ 1\leq i,j\leq d_\pi.$ Hence, $$\|\widehat{f}^\nu(\pi)_{ij}\|=\sup_{x^\prime\in B_{X^\prime}}|\langle\widehat{f}^\nu(\pi)_{ij},x^\prime\rangle|=\sup_{x^\prime\in B_{X^\prime}}\left|\frac{1}{d_\pi}\int_Gf(t)\overline{\pi(t)_{ji}}\,d\langle\nu,x^\prime\rangle(t)\right|\leq\frac{1}{d_\pi}\|f\|_\nu.$$ 
Thus, \begin{align*}
\sup_{[\pi]\in\widehat{G}}\|\widehat{f}^\nu(\pi)\|_{d_\pi}&\leq\sup_{[\pi]\in\widehat{G}}d_\pi\max_{1\leq i,j\leq d_\pi}\|\widehat{f}^\nu(\pi)_{ij}\|\leq\|f\|_\nu.
\end{align*}
Thus (i) follows. The proof of (ii) follows from Theorem \ref{CBInt}.
\end{proof}
A natural question that arises at this point is the validity of the Riemann Lebesgue Lemma for the Fourier transform, i.e., 
\begin{center}
{\it does  $\widehat{f}^\nu\in c_0\mbox{-}\underset{[\pi]\in\widehat{G}}{\oplus}M_{d_\pi}(X)$ whenever $f\in L^1(\nu)?$}
\end{center} 
The answer to the above mentioned question is negative in general, even negative for compact abelian groups. An example is provided here.
\begin{exam}\label{EFRLL}
Let $G$ be an infinite compact group. Then, consider the measure defined in Example \ref{FTExam} with $X=L^1(G)$ and $T$ the identity operator on $L^1(G).$ Let $0\neq f\in L^1(G)$ and $[\pi]\in\widehat{G}.$ Then, $\widehat{f}^\nu(\pi)=f\pi(\cdot)^\ast$ and therefore, $$\|\widehat{f}^\nu(\pi)\|_{M_{d_\pi}(L^1(G))}=\|f\pi(\cdot)^\ast\|_{M_{d_\pi}(L^1(G))}=\|f\|_{L^1(G)}.$$ Thus $\widehat{f}^\nu\notin c_0\mbox{-}\underset{[\pi]\in\widehat{G}}{\oplus}M_{d_\pi}(L^1(G)).$
\end{exam}
Now we study the Fourier transform of weakly integrable functions with respect to the vector measure $\nu$ under the assumption that $\nu\in M_{ac}(G,X).$ Since $\nu\in M_{ac}(G,X),$ it follows that $\langle\nu,x^\prime\rangle\in M_{ac}(G,\mathbb{C}),\ \forall\  x^\prime\in X^\prime.$ Thus, for a fixed $x^\prime\in X^\prime,$ by Radon-Nikodym theorem, there exists $h_{x^\prime}\in L^1(G)$ such that $d\langle\nu,x^\prime\rangle=h_{x^\prime}\,dm_G$ and hence for $f\in L^1_w(\nu)$, $$\int_G|f|(t)d|\langle\nu,x^\prime\rangle|(t)=\int_G|fh_{x^\prime}|(t)\,dm_G(t).$$ Thus, we have $fh_{x^\prime}\in L^1(G)$ for every $f\in L^1_w(\nu).$ With this as motivation, we define the Fourier transform of weakly integrable functions.
\begin{defn}\label{wFT}
Let $\nu\in M_{ac}(G,X).$ Then the Fourier transform of a function $f\in L^1_w(\nu)$ with respect to the vector measure $\nu$ is defined by $$\widehat{f}_\nu(x^\prime)(\pi)=\widehat{fh_{x^\prime}}(\pi),~[\pi]\in\widehat{G}\mbox{ and }x^\prime\in X^\prime.$$
\end{defn}
Since $fh_{x^\prime}\in L^1(G),$ it follows that $\widehat{fh_{x^\prime}}(\pi)\in M_{d_\pi}.$ Hence, for each $x^\prime\in X^\prime,$ we have  $\widehat{f}_\nu(x^\prime)\in \ell^\infty\mbox{-}\underset{[\pi]\in\widehat{G}}{\oplus}M_{d_\pi}.$

Our next theorem is an analogue of Theorem \ref{CBFTO}.
\begin{thm}\label{CBFOWIF}
Let $\nu\in M_{ac}(G,X).$
\begin{enumerate}[(i)]
\item If $f\in L^1_w(\nu)$ then $\widehat{f}_\nu\in\mathcal{B}\left(X^\prime,\ell^\infty\mbox{-}\underset{[\pi]\in\widehat{G}}{\oplus}M_{d_\pi}\right).$ In fact, $\underset{[\pi]\in\widehat{G}}{\sup}\|\widehat{f}_\nu(x^\prime)(\pi)\|_{M_{d_\pi}}\leq\|f\|_\nu\|x^\prime\|,\ x^\prime\in X^\prime.$
\item The Fourier transform operator $\mathcal{F}_\nu:L^1_w(\nu)\rightarrow \mathcal{B}\left(X^\prime, \ell^\infty\mbox{-}\underset{[\pi]\in\widehat{G}}{\oplus}M_{d_\pi}\right)$ given by $\mathcal{F}_\nu(f)=\widehat{f}_\nu$, is completely bounded.
\end{enumerate}
\end{thm}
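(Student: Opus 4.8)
The plan is to prove (i) by a direct estimate that reduces the matter to the classical Fourier transform on $L^1(G)$, and then to obtain (ii) from the operator space identification in Theorem \ref{CBInt2}, in exactly the way Theorem \ref{CBFTO}(ii) was deduced from Theorem \ref{CBInt}.

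For (i) I would first check that $x^\prime\mapsto\widehat{f}_\nu(x^\prime)$ is linear. Since $\langle\nu,\cdot\rangle$ is linear in $x^\prime$ and the Radon--Nikodym derivative is unique $m_G$-a.e., the assignment $x^\prime\mapsto h_{x^\prime}$ is linear in $L^1(G)$; as the classical Fourier transform is linear, additivity and homogeneity of $\widehat{f}_\nu$ follow at once from Definition \ref{wFT}. For the norm bound I would use the elementary estimate $\|\widehat{g}(\pi)\|_{M_{d_\pi}}\leq d_\pi\max_{i,j}|\widehat{g}(\pi)_{ij}|\leq\|g\|_{L^1(G)}$, valid for every $g\in L^1(G)$ because $|\pi(t)_{ij}|\leq 1$ forces $|\widehat{g}(\pi)_{ij}|\leq\frac{1}{d_\pi}\|g\|_{L^1(G)}$. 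Applying this to $g=fh_{x^\prime}$ and recalling $d|\langle\nu,x^\prime\rangle|=|h_{x^\prime}|\,dm_G$ gives
$$\|\widehat{f}_\nu(x^\prime)(\pi)\|_{M_{d_\pi}}=\|\widehat{fh_{x^\prime}}(\pi)\|_{M_{d_\pi}}\leq\int_G|f|\,d|\langle\nu,x^\prime\rangle|\leq\|f\|_\nu\,\|x^\prime\|,$$
where the last step is the definition of $\|f\|_\nu$ after normalising $x^\prime$. Taking the supremum over $[\pi]\in\widehat{G}$ yields both the membership $\widehat{f}_\nu(x^\prime)\in\ell^\infty\mbox{-}\oplus_{[\pi]}M_{d_\pi}$ and the stated estimate, so $\widehat{f}_\nu\in\mathcal{B}(X^\prime,\ell^\infty\mbox{-}\oplus_{[\pi]}M_{d_\pi})$ with $\|\mathcal{F}_\nu(f)\|\leq\|f\|_\nu$.

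For (ii) I would argue at the level of amplifications. Using Theorem \ref{CBInt2} I identify $[f_{ij}]\in M_n(L^1_w(\nu))$ with $\widetilde{f}\in\mbox{gen-}L^1_w(\nu,M_n,X)$, and on the target side I use $M_n(\ell^\infty\mbox{-}\oplus_{[\pi]}M_{d_\pi})=\ell^\infty\mbox{-}\oplus_{[\pi]}M_{nd_\pi}$ together with the canonical identification $\mathcal{CB}(X^\prime,M_N)\cong M_N(X^{\prime\prime})$. The computation driving the proof is the identity
$$\big((\mathcal{F}_\nu)_n[f_{ij}]\big)(x^\prime)(\pi)=\big[\widehat{f_{ij}}_\nu(x^\prime)(\pi)\big]_{i,j}=\frac{1}{d_\pi}\int_G\widetilde{f}(t)\otimes\pi(t)^*\,d\langle\nu,x^\prime\rangle(t)\in M_{nd_\pi},$$
obtained from the entrywise description underlying Remark \ref{RforFTM} and Definition \ref{wFT}. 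Thus the $n$-th amplification of $\mathcal{F}_\nu$ is, up to the scalar $\tfrac{1}{d_\pi}$ and the fixed unitary factor $\pi(t)^*$, precisely the weak ($\mbox{gen-}L^1_w$) integration of $\widetilde{f}$; since $\|\pi(t)^*\|=1$ for every $t$, this factor is harmless and provides control uniform in $[\pi]$, so complete boundedness of $\mathcal{F}_\nu$ reduces to the boundedness of the integration map furnished by Theorem \ref{CBInt2}.

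The \emph{main obstacle}, and the only point requiring genuine operator space input, is the uniform-in-$n$ estimate: one must bound the matrix in the displayed identity by the operator space norm $N_w(\widetilde{f})=\|[f_{ij}]\|_{M_n(L^1_w(\nu))}$ coming from Theorem \ref{CBInt2}, rather than by the cruder full-variation quantity $N(\widetilde{f})$. A naive entrywise bound is fatal here, since the inequality $\|[x_{ij}]\|_n\leq n\max_{i,j}\|x_{ij}\|$ loses a factor of $n$ and destroys uniformity; this is exactly why the non-abelian setting forces one to work with operator spaces. I expect to handle this by passing through $\mathcal{CB}(X^\prime,M_{nd_\pi})\cong M_{nd_\pi}(X^{\prime\prime})$, viewing $\widetilde{f}(\cdot)\otimes\pi(\cdot)^*$ as an element of $\mbox{gen-}L^1_w(\nu,M_{nd_\pi},X)$ whose weak integral is controlled by $N_w$, with the unitarity of $\pi(t)$ absorbing the tensor factor. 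Once this estimate is in place, taking the supremum over $[\pi]$ and over $n$ gives $\|\mathcal{F}_\nu\|_{cb}\leq 1$ and completes (ii).
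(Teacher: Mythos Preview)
Your proposal is correct and follows essentially the same approach as the paper. Part (i) is identical in substance: reduce to the classical bound $\sup_{[\pi]}\|\widehat{g}(\pi)\|\leq\|g\|_1$ with $g=fh_{x^\prime}$, then use $\|fh_{x^\prime}\|_1\leq\|f\|_\nu\|x^\prime\|$; the paper omits the linearity check you include, but that is routine. For part (ii) the paper is terser than you are---it simply says the argument goes exactly as in Theorem~\ref{CBInt}(iii) once one replaces Theorem~\ref{CBInt} by Theorem~\ref{CBInt2}---so your unpacking of the amplification via $\widetilde f\otimes\pi^*$ and the identification $\mathcal{CB}(X^\prime,M_{nd_\pi})\cong M_{nd_\pi}(X^{\prime\prime})$ is precisely the content the paper leaves to the cited reference, and the ``main obstacle'' you flag is the one operator-space ingredient that both proofs rely on.
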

\begin{proof}
	For $f\in L^1_w(\nu)$ and $x^\prime\in X^\prime,$ we have $fh_{x^\prime}\in L^1(G).$ Using the fact that if $g\in L^1(G)$ then $\underset{[\pi]\in\widehat{G}}{\sup} \|\widehat{g}(\pi)\|_{M_{d_\pi}} \leq \|g\|_1,$ we have,
	$$\sup_{[\pi]\in\widehat{G}}\|\widehat{f}_\nu(x^\prime)(\pi)\|_{M_{d_\pi}}=\sup_{[\pi]\in\widehat{G}}\|\widehat{fh_{x^\prime}}(\pi)\|_{M_{d_\pi}}\leq\|fh_{x^\prime}\|_1\leq\|f\|_\nu\|x^\prime\|.$$
    Thus (i) follows. The proof of (ii) follows as the proof of Theorem \ref{CBInt}(iii), once we use Theorem \ref{CBInt2}.
\end{proof}
\begin{rem}\label{E3}
If $\nu\in M_{ac}(G,X)$ and $f\in L^1(\nu),$ then for any $x^\prime\in X^\prime$ and $[\pi]\in\widehat{G},$
$$\langle\langle \widehat{f}^\nu(\pi),x^\prime\rangle\rangle=[\langle \widehat{f}^\nu(\pi)_{ij},x^\prime\rangle]=[\widehat{fh_{x^\prime}}(\pi)_{ij}]=\widehat{fh_{x^\prime}}(\pi)=\widehat{f}_\nu(x^\prime)(\pi).$$
\end{rem}
We now show the injectivity of the Fourier transform operator.
\begin{thm}[Uniqueness theorem]\label{T5} Let $\nu\in M_{ac}(G,X)$ and $f\in L^1_w(\nu)$. If $\widehat{f}_\nu=0$ then $f=0$ $\nu$-a.e..
\end{thm}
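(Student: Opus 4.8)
The plan is to reduce the statement to the classical uniqueness theorem for the Fourier transform on $L^1(G)$, applied for each fixed functional $x'$, and then to reassemble the conclusion using the Radon--Nikodym densities $h_{x'}$ that already appear in the definition of $\widehat{f}_\nu$.

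First I would unwind the hypothesis. Since $\nu \in M_{ac}(G,X)$, for each $x' \in X'$ there is $h_{x'} \in L^1(G)$ with $d\langle \nu, x' \rangle = h_{x'}\,dm_G$, and by Definition \ref{wFT} the assumption $\widehat{f}_\nu = 0$ says precisely that $\widehat{fh_{x'}}(\pi) = 0$ for every $[\pi] \in \widehat{G}$ and every $x' \in X'$. As $fh_{x'} \in L^1(G)$, the classical uniqueness theorem for the Fourier transform on a compact group (a consequence of the Inversion Theorem \ref{FT}(ii)) gives $fh_{x'} = 0$ $m_G$-a.e., for each fixed $x' \in X'$.

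The next step is to convert this family of $m_G$-a.e. statements into a single statement about the $\nu$-null set $\{f \neq 0\}$. Here I would use the density identity $d|\langle \nu, x' \rangle| = |h_{x'}|\,dm_G$ (already recorded in the text preceding Definition \ref{wFT}). Fix $x' \in X'$. The relation $fh_{x'} = 0$ $m_G$-a.e. forces $\{f \neq 0\} \cap \{h_{x'} \neq 0\}$ to be $m_G$-null, so that $|h_{x'}| = 0$ $m_G$-a.e. on $\{f \neq 0\}$ and hence
$$|\langle \nu, x' \rangle|(\{f \neq 0\}) = \int_{\{f \neq 0\}} |h_{x'}|\,dm_G = 0.$$

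Finally, taking the supremum over $x' \in B_{X'}$ gives $\|\nu\|(\{f \neq 0\}) = 0$. Since $\|\nu\|(A) = 0$ forces $\nu(B) = 0$ for every Borel $B \subseteq A$, the Borel set $\{f \neq 0\}$ is $\nu$-null; that is, $f = 0$ $\nu$-a.e. The two load-bearing ingredients are the injectivity of the scalar Fourier transform on $L^1(G)$ and the density identity $d|\langle \nu, x' \rangle| = |h_{x'}|\,dm_G$, which is what lets the pointwise-in-$x'$ vanishing be promoted to vanishing of the semivariation. I do not expect a serious obstacle here; the only points needing mild care are the Borel measurability of $\{f \neq 0\}$ and the clean passage from $fh_{x'} = 0$ $m_G$-a.e. to the vanishing of $|\langle \nu, x' \rangle|$ on $\{f \neq 0\}$.
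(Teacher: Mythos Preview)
Your proof is correct and follows essentially the same approach as the paper: reduce to the classical uniqueness theorem applied to $fh_{x'}\in L^1(G)$ for each $x'$, and then use the density relation $d|\langle\nu,x'\rangle|=|h_{x'}|\,dm_G$ to conclude that $f=0$ $\nu$-a.e. Your bookkeeping via the single set $\{f\neq 0\}$ is a slight streamlining of the paper's three-piece decomposition $A\cup\widetilde{G\setminus A}\cup\big((G\setminus A)\setminus\widetilde{G\setminus A}\big)$, but the content is the same.
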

\begin{proof}
Let $f\in L^1_w(\nu)$ and let $x^\prime\in X^\prime.$ As $\widehat{f}_\nu=0$ it follows that $\widehat{f}_\nu(x^\prime)=0,$ i.e., $\widehat{fh_{x^\prime}}=0.$ Then by the classical Uniqueness theorem for Fourier transform we have that $fh_{x^\prime}=0$ $m_G$-a.e.. Hence, there exists $A\in\mathfrak{B}(G)$ such that $fh_{x^\prime}=0$ on $G\setminus A$ with $m_G(A)=0.$ Since $\nu\in M_{ac}(G,X)$ we get $|\langle\nu,x^\prime\rangle|(A)=0.$ So $A$ is $\nu$-null. Let $\widetilde{G\setminus A}=\{t\in G\setminus A:h_{x^\prime}(t)=0\}.$ Thus $f=0$ on $(G\setminus A)\setminus\widetilde{G\setminus A}.$ Now, note that $$|\langle\nu,x^\prime\rangle|(\widetilde{G\setminus A})=\int_{\widetilde{G\setminus A}}|h_{x^\prime}|\,dm_G=0,$$ which implies that $\widetilde{G\setminus A}$ is also $\nu$-null. Therefore, $$\int_G|f|\,d|\langle\nu, x^\prime\rangle|=\int_A|f|\,d|\langle\nu, x^\prime\rangle|+\int_{\widetilde{G\setminus A}}|f|\,d|\langle\nu, x^\prime\rangle|+\int_{(G\setminus A)\setminus\widetilde{G\setminus A}}|f|\,d|\langle\nu, x^\prime\rangle|=0.$$ Hence $f=0$ $\nu$-a.e..
\end{proof}
Using Remark \ref{E3} and Theorem \ref{T5} we have the following corollary.
\begin{cor}Let $\nu\in M_{ac}(G,X)$ and $f\in L^1(\nu).$ If $\widehat{f}^\nu=0$ then $f=0$ $\nu$-a.e..
\end{cor}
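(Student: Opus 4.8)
The plan is to reduce the statement about $L^1(\nu)$-functions to the already-proven Uniqueness theorem (Theorem \ref{T5}) for $L^1_w(\nu)$-functions, exploiting the inclusion $L^1(\nu)\subseteq L^1_w(\nu)$ together with the compatibility of the two notions of Fourier transform recorded in Remark \ref{E3}. The key observation is that, although the object $\widehat{f}^\nu(\pi)$ lives in $M_{d_\pi}(X)$ (it has entries in the operator space $X$) whereas $\widehat{f}_\nu(x')(\pi)$ lives in $M_{d_\pi}$ (scalar entries), these two are linked for every $x'\in X'$ by the matrix pairing, and this is exactly the content of Remark \ref{E3}.

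First I would note that since $f\in L^1(\nu)$ and $L^1(\nu)\subseteq L^1_w(\nu)$, the function $f$ is in particular $\nu$-weakly integrable, so its weak Fourier transform $\widehat{f}_\nu$ is defined. Next, I would use the hypothesis $\widehat{f}^\nu=0$, meaning $\widehat{f}^\nu(\pi)=0$ in $M_{d_\pi}(X)$ for every $[\pi]\in\widehat{G}$. By Remark \ref{E3}, for every $x'\in X'$ and every $[\pi]\in\widehat{G}$ we have
$$\widehat{f}_\nu(x')(\pi)=\langle\langle\widehat{f}^\nu(\pi),x'\rangle\rangle.$$
Since $\widehat{f}^\nu(\pi)=0$, each entry $\widehat{f}^\nu(\pi)_{ij}\in X$ is zero, hence $\langle\widehat{f}^\nu(\pi)_{ij},x'\rangle=0$ for all $i,j$, so the matrix pairing vanishes and $\widehat{f}_\nu(x')(\pi)=0$. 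As this holds for all $x'$ and all $[\pi]$, we conclude $\widehat{f}_\nu=0$.

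Finally, I would invoke Theorem \ref{T5}: since $f\in L^1_w(\nu)$ and $\widehat{f}_\nu=0$, it follows that $f=0$ $\nu$-a.e., which is precisely the assertion of the corollary. I do not anticipate any genuine obstacle here, as the corollary is a direct consequence of the two cited results; the only point requiring mild care is the passage from the vanishing of the operator-space-valued transform $\widehat{f}^\nu$ to the vanishing of the scalar-valued transforms $\widehat{f}_\nu(x')$, which is guaranteed by the definition of the matrix pairing $\langle\langle\cdot,\cdot\rangle\rangle$ together with Remark \ref{E3}. Thus the proof amounts to stringing together these two observations, and the whole argument is short.
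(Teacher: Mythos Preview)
Your proposal is correct and follows exactly the approach indicated in the paper, which simply states that the corollary follows from Remark \ref{E3} and Theorem \ref{T5}. Your write-up spells out precisely the reduction the paper has in mind: pass from $\widehat{f}^\nu=0$ to $\widehat{f}_\nu=0$ via the matrix pairing identity of Remark \ref{E3}, then apply Theorem \ref{T5}.
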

\begin{defn}Let $k\in[0,\infty).$ A vector measure $\nu$ is said to be $k$-scalarly bounded by $m_G$ if for any $x^\prime\in X^\prime$ and $A\in\mathfrak{B}(G),$ we have $|\langle\nu,x^\prime\rangle|(A)\leq km_G(A).$
\end{defn}
\begin{lem}\mbox{}\label{Mp}
	\begin{enumerate}[(i)]
		\item If $\nu$ is $k$-scalarly bounded by $m_G,$ then $\nu\in M_{ac}(G,X)$ and for each $x^\prime\in X^\prime$ there exists $h_{x^\prime}\in L^p(G)$ such that $d\langle\nu,x^\prime\rangle=h_{x^\prime}\,dm_G$ for every $1\leq p\leq\infty.$
		\item If $\nu\in M_p(G,X),\ 1<p\leq\infty,$ then $\nu\in M_{ac}(G,X)$ and for each $x^\prime\in X^\prime$ there exists $h_{x^\prime}\in L^p(G)$ such that $d\langle\nu,x^\prime\rangle=h_{x^\prime}\,dm_G.$ Moreover,  $L^{p^\prime}(G)\subset L^1(\nu),$ where $p^\prime$ is the conjugate exponent of $p.$
	\end{enumerate}
\end{lem}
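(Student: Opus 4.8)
The plan is to handle the two parts with different tools: the Radon--Nikodym theorem for (i), and the bounded extension of $T_\nu$ together with $L^p$--$L^{p^\prime}$ duality for (ii). For part (i), I would first read the hypothesis as a bound on the semivariation. Since for $x^\prime\in B_{X^\prime}$ and $A\in\mathfrak{B}(G)$ one has $|\langle\nu(A),x^\prime\rangle|=|\langle\nu,x^\prime\rangle(A)|\le|\langle\nu,x^\prime\rangle|(A)\le k\,m_G(A)$, taking the supremum over $B_{X^\prime}$ gives $\|\nu(A)\|\le k\,m_G(A)$, which tends to $0$ as $m_G(A)\to0$; hence $\nu\in M_{ac}(G,X)$. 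Next, fixing $x^\prime\in X^\prime$, the scalar measure $\langle\nu,x^\prime\rangle$ has bounded variation and is absolutely continuous with respect to $m_G$, so the Radon--Nikodym theorem produces $h_{x^\prime}\in L^1(G)$ with $d\langle\nu,x^\prime\rangle=h_{x^\prime}\,dm_G$. The only extra ingredient is that $h_{x^\prime}$ is essentially bounded: from $\int_A|h_{x^\prime}|\,dm_G=|\langle\nu,x^\prime\rangle|(A)\le k\,m_G(A)$ for $x^\prime\in B_{X^\prime}$ (and by homogeneity $k\|x^\prime\|\,m_G(A)$ in general) for every $A$, testing on the sets $\{|h_{x^\prime}|>k\|x^\prime\|\}$ forces $\|h_{x^\prime}\|_\infty\le k\|x^\prime\|$. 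As $m_G$ is finite we have $L^\infty(G)\subseteq L^p(G)$ for all $1\le p\le\infty$, which gives (i).

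For part (ii), I would invoke the fact recalled in Section 2 that for $\nu\in M_p(G,X)$, $1<p\le\infty$, the operator $T_\nu$ extends to a bounded operator $T_\nu\colon L^{p^\prime}(G)\to X$ with $\|T_\nu\|=\|\nu\|_{p,m_G}$, which on simple functions is $\sum_A\alpha_A\chi_A\mapsto\sum_A\alpha_A\nu(A)$ and in particular satisfies $T_\nu(\chi_A)=\nu(A)$. For $x^\prime\in X^\prime$ the composition $x^\prime\circ T_\nu$ is a bounded linear functional on $L^{p^\prime}(G)$ of norm at most $\|x^\prime\|\,\|\nu\|_{p,m_G}$; since $1\le p^\prime<\infty$ its dual space is $L^p(G)$, so there is a unique $h_{x^\prime}\in L^p(G)$ with $(x^\prime\circ T_\nu)(g)=\int_G g\,h_{x^\prime}\,dm_G$ and $\|h_{x^\prime}\|_p\le\|x^\prime\|\,\|\nu\|_{p,m_G}$. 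Evaluating at $g=\chi_A$ and using $T_\nu(\chi_A)=\nu(A)$ yields $\langle\nu,x^\prime\rangle(A)=\int_A h_{x^\prime}\,dm_G$, i.e. $d\langle\nu,x^\prime\rangle=h_{x^\prime}\,dm_G$ with $h_{x^\prime}\in L^p(G)$. Absolute continuity then follows from the uniform bound: by H\"older, $\|\nu(A)\|=\sup_{x^\prime\in B_{X^\prime}}|\int_A h_{x^\prime}\,dm_G|\le\|\nu\|_{p,m_G}\,m_G(A)^{1/p^\prime}\to0$ as $m_G(A)\to0$, since $1\le p^\prime<\infty$.

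Finally, for the inclusion $L^{p^\prime}(G)\subseteq L^1(\nu)$, fix $g\in L^{p^\prime}(G)$. Weak integrability is immediate from H\"older, since $\int_G|g|\,d|\langle\nu,x^\prime\rangle|=\int_G|g|\,|h_{x^\prime}|\,dm_G\le\|g\|_{p^\prime}\|h_{x^\prime}\|_p<\infty$ for every $x^\prime$, so $g\in L^1_w(\nu)$. The step I expect to be the main obstacle is upgrading this to genuine $\nu$-integrability, namely producing for each $A\in\mathfrak{B}(G)$ a vector $x_A\in X$ with $\int_A g\,d\langle\nu,x^\prime\rangle=\langle x_A,x^\prime\rangle$ for all $x^\prime\in X^\prime$. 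I would set $x_A:=T_\nu(g\chi_A)$, which is legitimate because $g\chi_A\in L^{p^\prime}(G)$, and then check $\langle x_A,x^\prime\rangle=(x^\prime\circ T_\nu)(g\chi_A)=\int_A g\,h_{x^\prime}\,dm_G=\int_A g\,d\langle\nu,x^\prime\rangle$; uniqueness of $x_A$ holds because $X^\prime$ separates points of $X$. This exhibits $g$ as $\nu$-integrable and finishes the proof. Alternatively, this last inclusion may be cited directly from \cite[Proposition 4.4]{ORP}, as was done in Example \ref{FTExam}.
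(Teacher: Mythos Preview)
Your proof is correct and follows essentially the same approach as the paper. For (i) the paper simply says the claim is clear from the definition, whereas you spell out the Radon--Nikodym argument and the $L^\infty$ bound on $h_{x^\prime}$; for (ii) both arguments hinge on the bounded extension $T_\nu\colon L^{p^\prime}(G)\to X$, with your use of $x^\prime\circ T_\nu\in(L^{p^\prime}(G))^\prime\cong L^p(G)$ being exactly the paper's observation that $T_\nu^*(X^\prime)\subset L^p(G)$. The only minor methodological difference is in establishing $L^{p^\prime}(G)\subset L^1(\nu)$: the paper argues by density of $C(G)$ in $L^{p^\prime}(G)$ together with the norm bound $\|T_\nu(f)\|\le\|\nu\|_{p,m_G}\|f\|_{p^\prime}$, while you give the direct construction $x_A:=T_\nu(g\chi_A)$ and verify the defining identity for $\nu$-integrability; your route is arguably cleaner since it avoids any implicit completeness step.
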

\begin{proof}
	(i) is clear from the definition. We now prove (ii). Let $\nu\in M_p(G,X),\ 1<p\leq\infty.$ By \cite[Pg. 248 and Pg. 259]{D}, it follows that $\nu\in M_{ac}(G,X)$ and the operator $T_\nu$ extends to a bounded linear operator from $L^{p^\prime}(G)$ to $X$ with $\|T_\nu\|=\|\nu\|_{p,m_G}.$ Thus, $T_\nu^*(X^\prime)\subset L^p(G).$ Now, let $x^\prime\in X^\prime.$ Then $T_\nu^*(x^\prime)=\langle\nu,x^\prime\rangle$ and hence there exists $h_{x^\prime}\in L^p(G)$ such that $d\langle\nu,x^\prime\rangle=h_{x^\prime}\,dm_G.$ If $f\in C(G)$ then $$\left\|\int_Gf\,d\nu\right\|=\|T_\nu(f)\|\leq\|\nu\|_{p,m_G}\|f\|_{p^\prime}.$$ Hence the conclusion that $L^{p^\prime}(G)\subset L^1(\nu)$ follows from the density of $C(G)$ in $L^{p^\prime}(G).$
\end{proof}
In the next theorem, we provide a subclass of $L^1_w(\nu)$ and vector measures for which the Fourier transform satisfies the classical Plancheral Theorem and the Inversion Theorem.
\begin{thm}
	Let either $\nu\in M_4(G,X)$ or $\nu$ be $k$-scalarly bounded by $m_G$. If $f\in L^1_w(\nu)\cap L^4(G),$ then for $x^\prime\in X^\prime,$ 	\begin{enumerate}[(i)]
		\item {\normalfont(Plancheral Theorem).} $$\|fh_{x^\prime}\|_2^2=\sum_{[\pi]\in\widehat{G}}d_\pi^3\  tr((\widehat{f}_\nu(x^\prime)(\pi))^*\widehat{f}_\nu(x^\prime)(\pi)).$$
		\item {\normalfont(Inversion Theorem).} $$fh_{x^\prime}=\sum_{[\pi]\in\widehat{G}}d_\pi^2\ tr(\widehat{f}_\nu(x^\prime)(\pi)\pi(\cdot)),$$ where the above series converges in the $L^2(G)$-norm.
	\end{enumerate}
\end{thm}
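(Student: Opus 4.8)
The plan is to reduce both assertions to the classical Plancherel and Inversion theorems recorded in Theorem \ref{FT}, applied to the single scalar function $g:=fh_{x^\prime}$. Indeed, by Definition \ref{wFT} we have $\widehat{f}_\nu(x^\prime)(\pi)=\widehat{fh_{x^\prime}}(\pi)=\widehat{g}(\pi)$ for every $[\pi]\in\widehat{G}$, so once we know that $g\in L^2(G)$ the two displayed identities are precisely Theorem \ref{FT}(i) and (ii) written for $g$. Thus the entire substance of the proof lies in verifying that $fh_{x^\prime}\in L^2(G)$ under each of the two hypotheses, after which the conclusions are formal substitutions.

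First I would treat the case in which $\nu$ is $k$-scalarly bounded by $m_G$. By Lemma \ref{Mp}(i), for the fixed $x^\prime\in X^\prime$ there is a density $h_{x^\prime}$ with $d\langle\nu,x^\prime\rangle=h_{x^\prime}\,dm_G$ lying in $L^p(G)$ for every $1\leq p\leq\infty$; in particular $h_{x^\prime}\in L^\infty(G)$, as is also clear directly from $\int_A|h_{x^\prime}|\,dm_G=|\langle\nu,x^\prime\rangle|(A)\leq k\,m_G(A)$. Since $G$ is compact, $m_G$ is finite and $f\in L^4(G)\subset L^2(G)$, so that $fh_{x^\prime}\in L^2(G)$ with $\|fh_{x^\prime}\|_2\leq\|h_{x^\prime}\|_\infty\,\|f\|_2$.

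Next I would treat the case $\nu\in M_4(G,X)$. Here Lemma \ref{Mp}(ii), applied with $p=4$, yields $h_{x^\prime}\in L^4(G)$ together with $d\langle\nu,x^\prime\rangle=h_{x^\prime}\,dm_G$. The exponent $4$ is exactly what makes the estimate close: since $f\in L^4(G)$ as well and $\tfrac12=\tfrac14+\tfrac14$, H\"older's inequality gives $fh_{x^\prime}\in L^2(G)$ with $\|fh_{x^\prime}\|_2\leq\|f\|_4\,\|h_{x^\prime}\|_4<\infty$. This explains the shape of the hypotheses: had one only assumed $\nu\in M_2(G,X)$, Lemma \ref{Mp}(ii) would provide $h_{x^\prime}\in L^2(G)$ and the product would merely land in $L^1(G)$, so the fourth power is needed to land the product back in $L^2(G)$.

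Having established $g=fh_{x^\prime}\in L^2(G)$ in both cases, I would conclude by invoking Theorem \ref{FT}: part (i) gives $\|g\|_2^2=\sum_{[\pi]\in\widehat{G}}d_\pi^3\,tr(\widehat{g}(\pi)^*\widehat{g}(\pi))$ and part (ii) gives $g=\sum_{[\pi]\in\widehat{G}}d_\pi^2\,tr(\widehat{g}(\pi)\pi(\cdot))$ with convergence in the $L^2(G)$-norm. Substituting $\widehat{g}(\pi)=\widehat{f}_\nu(x^\prime)(\pi)$ produces the two claimed formulas verbatim. The only nonroutine point is the integrability check in the two cases above; once the correct Lebesgue exponent is identified there is no further obstacle, as the remaining equalities are purely formal consequences of the classical theorem.
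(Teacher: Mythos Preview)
Your proposal is correct and follows essentially the same approach as the paper: reduce to showing $fh_{x'}\in L^2(G)$ via Lemma \ref{Mp} and H\"older, then invoke Theorem \ref{FT}. The paper's proof is terser and treats both hypotheses uniformly by observing $h_{x'}\in L^4(G)$ in either case, whereas you split the cases and use $h_{x'}\in L^\infty(G)$ in the $k$-scalarly bounded case; this is a cosmetic difference only.
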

\begin{proof}
	Let either $\nu\in M_4(G,X)$ or $\nu$ is $k$-scalarly bounded by $m_G.$ Let $f\in L^1_w(\nu)\cap L^4(G)$ and $x^\prime\in X^\prime.$  Then, by Lemma \ref{Mp}, we have $h_{x^\prime}\in L^4(G).$ Thus, by the H\"older's inequality, we have $fh_{x^\prime}\in L^2(G).$ Hence by Theorem \ref{FT}, the result follows.
\end{proof}

\section{Invariant measures}
In this section, we introduce the notion of invariant vector measures with respect to a homeomorphism. The results of this section will be used later.

Let $h:G\rightarrow G$ be a homeomorphism, for example translation ($\tau_t(s)=st$) or inversion ($i(t)=t^{-1}$). For a measurable function  $f:G\rightarrow\mathbb{C},$ the function $f_h:G\rightarrow\mathbb{C}$ given by $f_h=f\circ h^{-1}$, is also a measurable function. For example $\tau_tf(s):=f_{\tau_t}(s)=f(st^{-1})$ and $\tilde{f}(t):=f_i(t)=f(t^{-1}).$ Define $\nu_h(A)=\nu(h(A)),~A\in\mathfrak{B}(G).$ Then $\nu_h$ is also a vector measure.

Now we define the notion of invariance of a vector measure.
\begin{defn}A vector measure $\nu$ is said to be semivariation $h$-invariant if $\|(\nu_h)_\phi\|=\|\nu_\phi\|,~\forall~\phi\in S(G).$
\end{defn}
\begin{prop}\label{A4}
A vector measure $\nu$ is semivariation $h$-invariant if and only if $L^1(\nu)=L^1(\nu_h)$ isometrically.
\end{prop}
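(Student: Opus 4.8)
The plan is to observe that, once the definitions are unwound, semivariation $h$-invariance says exactly that the norms $\|\cdot\|_\nu$ and $\|\cdot\|_{\nu_h}$ coincide on the simple functions, and then to upgrade this to the isometric identification of the two $L^1$-spaces using density of $S(G)$ together with completeness.

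First I would reduce the hypothesis to a statement about norms. Every $\phi\in S(G)$ lies in both $L^1(\nu)$ and $L^1(\nu_h)$, and by the relation $\|\nu_f\|=\|f\|_\nu$ recorded in Section 2 we have $\|\nu_\phi\|=\|\phi\|_\nu$ and $\|(\nu_h)_\phi\|=\|\phi\|_{\nu_h}$. Hence the defining condition $\|(\nu_h)_\phi\|=\|\nu_\phi\|$ for all $\phi\in S(G)$ is equivalent to
$$\|\phi\|_\nu=\|\phi\|_{\nu_h}\qquad\text{for every }\phi\in S(G).$$
This reformulation immediately settles the backward implication: if $L^1(\nu)=L^1(\nu_h)$ isometrically, then since $S(G)$ sits inside this common space with a single norm, the displayed equality holds, and running the reduction in reverse gives $\|(\nu_h)_\phi\|=\|\nu_\phi\|$.

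For the forward implication I would assume the displayed identity and exploit that $S(G)$ is $\|\cdot\|_\nu$-dense in $L^1(\nu)$ and $\|\cdot\|_{\nu_h}$-dense in $L^1(\nu_h)$, both spaces being complete. Given $f\in L^1(\nu)$, pick simple $\phi_n\to f$ in $\|\cdot\|_\nu$; as differences of simple functions are simple, $(\phi_n)$ is $\|\cdot\|_\nu$-Cauchy, hence $\|\cdot\|_{\nu_h}$-Cauchy by the displayed identity, so $\phi_n\to g$ in $L^1(\nu_h)$ for some $g$. The norm equality $\|f\|_\nu=\lim\|\phi_n\|_\nu=\lim\|\phi_n\|_{\nu_h}=\|g\|_{\nu_h}$ is then automatic, and by the symmetry of the hypothesis in $\nu$ and $\nu_h$ it suffices to prove $g=f$ as functions to conclude $L^1(\nu)=L^1(\nu_h)$ isometrically.

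The main obstacle is precisely this identification $g=f$, i.e. checking that the two abstract completions of $S(G)$ are realized as the same concrete space of functions. I would resolve it in two steps. Taking $\phi=\chi_A$ in the displayed identity gives $\|\nu\|(A)=\|\nu_h\|(A)$ for all $A\in\mathfrak{B}(G)$, so the $\nu$-null and $\nu_h$-null sets coincide. Next, for a Rybakov control functional $x_0'$ one has $\int_G|\cdot|\,d|\langle\nu,x_0'\rangle|\le\|\cdot\|_\nu$, so $\|\cdot\|_\nu$-convergence forces convergence in $|\langle\nu,x_0'\rangle|$-measure and hence $\nu$-a.e. convergence along a subsequence (see \cite{ORP}); the same holds for $\nu_h$. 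Passing to a common subsequence, $\phi_n\to f$ $\nu$-a.e. and $\phi_n\to g$ $\nu_h$-a.e., and since the two families of null sets agree, $f=g$ off a single null set. This completes the forward direction.
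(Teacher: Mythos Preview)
Your proof is correct and follows the same core idea as the paper's: both reduce the statement to the identity $\|\phi\|_\nu=\|\phi\|_{\nu_h}$ on $S(G)$ via the relation $\|\nu_\phi\|=\|\phi\|_\nu$, and both invoke density of $S(G)$ for the forward direction. The paper simply writes ``by density, it is enough to prove for simple functions'' and records the chain $\|\phi\|_{\nu_h}=\|(\nu_h)_\phi\|=\|\nu_\phi\|=\|\phi\|_\nu$, whereas you additionally justify that the two completions coincide as concrete function spaces (equality of semivariations on sets, hence of null sets, plus Rybakov control to extract a.e.\ convergent subsequences); this is extra rigor the paper omits, not a different argument.
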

\begin{proof}
	By density, it is enough to prove for simple functions. Let $\phi\in S(G).$ Now let the measure $\nu$ be semivariation $h$-invariant. Then $\|\phi\|_{\nu_h}=\|(\nu_h)_\phi\|=\|\nu_\phi\|=\|\phi\|_\nu.$ Conversely, since $L^1(\nu)=L^1(\nu_h)$ isometrically, it follows that $\|(\nu_h)_\phi\|=\|\phi\|_{\nu_h}=\|\phi\|_\nu=\|\nu_\phi\|.$
\end{proof}
\begin{rem}\label{r2}If $\nu$ is semivariation $h$-invariant, then for $1\leq p<\infty,$ $$\|\phi\|_{\nu,p}=\|\phi^p\|_\nu^{1/p}=\|\nu_{\phi^p}\|^{1/p}=\|(\nu_{h})_{\phi^p}\|^{1/p}=\|\phi^p\|_{\nu_h}^{1/p}=\|\phi\|_{\nu_h,p},~\phi\in S(G).$$ By density of simple functions we have $L^p(\nu)=L^p(\nu_h)$ isometrically for $1\leq p<\infty$.
\end{rem}
\begin{defn}
A Banach function space $Z$ is said to be norm $h$-invariant if for each $f\in Z$ we have $f_h\in Z$ and $\|f_h\|_Z=\|f\|_Z.$
\end{defn}
\begin{prop}\label{R1}
	Let $\nu$ be a semivariation $h$-invariant vector measure and $1\leq p<\infty$. Then $L^p(\nu)$ is norm $h$-invariant.
\end{prop}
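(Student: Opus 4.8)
The plan is to reduce the statement to simple functions, where it follows from a change-of-variables identity combined with Proposition \ref{A4} and Remark \ref{r2}, and then to extend by density. Fix $1\leq p<\infty$ and $f\in L^p(\nu)$, and recall that $f_h=f\circ h^{-1}$ and that $S(G)$ is dense in $L^p(\nu)$.

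First I would record the basic change-of-variables identity for the induced measures $\nu_\phi$. For a simple function $\phi=\sum_j\alpha_j\chi_{A_j}$ one has $\phi_h=\sum_j\alpha_j\chi_{h(A_j)}$, and a direct computation on Borel sets yields
\[
(\nu_h)_\phi=(\nu_{\phi_h})_h,
\]
which one verifies first for $\phi=\chi_A$ (both sides send $B$ to $\nu(h(A)\cap h(B))$) and then extends by linearity. Next I would observe that the homeomorphism $h$ leaves the total semivariation of any $X$-valued vector measure $\lambda$ unchanged, $\|\lambda_h\|=\|\lambda\|$: indeed $\langle\lambda_h,x^\prime\rangle=(\langle\lambda,x^\prime\rangle)_h$, and the total variation of a scalar measure over $G$ is invariant under the Borel isomorphism $h$, since $\{E_i\}$ partitions $G$ if and only if $\{h(E_i)\}$ does. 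Combining these two facts with the identity $\|\nu_g\|=\|g\|_\nu$ gives, for every $\phi\in S(G)$,
\[
\|\phi_h\|_\nu=\|\nu_{\phi_h}\|=\|(\nu_{\phi_h})_h\|=\|(\nu_h)_\phi\|=\|\phi\|_{\nu_h}.
\]
Raising to the power $p$ and using $(\phi_h)^p=(\phi^p)_h$ with the same identity applied to the simple function $\phi^p$ yields $\|\phi_h\|_{\nu,p}=\|\phi\|_{\nu_h,p}$. Since $\nu$ is semivariation $h$-invariant, Remark \ref{r2} gives $\|\phi\|_{\nu_h,p}=\|\phi\|_{\nu,p}$, so that $\|\phi_h\|_{\nu,p}=\|\phi\|_{\nu,p}$ for all $\phi\in S(G)$.

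Finally I would extend this isometry from $S(G)$ to all of $L^p(\nu)$. Choosing simple $\phi_n\to f$ in $L^p(\nu)$, linearity of $g\mapsto g_h$ together with the displayed isometry shows that $((\phi_n)_h)$ is Cauchy in $L^p(\nu)$, hence converges to some $g$ with $\|g\|_{\nu,p}=\|f\|_{\nu,p}$. The main obstacle is identifying this abstract limit $g$ with the concrete function $f_h=f\circ h^{-1}$, since the $L^p(\nu)$-norm need not come from a single control measure. I would handle this by passing to a subsequence along which $\phi_n\to f$ holds $\nu$-a.e. (via a Rybakov control measure), so that $(\phi_n)_h\to f_h$ pointwise off the image $h(N)$ of the $\nu$-null exceptional set $N$; it then remains to check that $h(N)$ is again $\nu$-null. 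This follows because semivariation $h$-invariance gives $\|\nu_h\|(A)=\|\nu\|(A)$ for all $A$ (take $\phi=\chi_A$ in the definition), whence $\|\nu\|(h(N))=\|\nu_h\|(N)=\|\nu\|(N)=0$. Thus $g=f_h$ $\nu$-a.e., so $f_h\in L^p(\nu)$ with $\|f_h\|_{\nu,p}=\|f\|_{\nu,p}$, which is exactly the assertion that $L^p(\nu)$ is norm $h$-invariant.
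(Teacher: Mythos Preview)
Your proof is correct and follows essentially the same route as the paper's: establish $\|\phi_h\|_\nu=\|\phi\|_{\nu_h}$ for simple $\phi$ via the change-of-variables relation $(\nu_h)_\phi=(\nu_{\phi_h})_h$, combine with Remark~\ref{r2}, and extend by density. The paper's own argument is terser---it records $\|(\nu_h)_\phi\|=\|\nu_{\phi_h}\|$ without the intermediate steps and simply invokes density of $S(G)$ without the careful identification of the abstract limit with $f_h$ that you supply.
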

\begin{proof}
	By density, it is enough to prove for simple functions. Let $\phi\in S(G).$ Note that $$\|\phi\|_{\nu_h}=\|(\nu_h)_\phi\|=\|\nu_{\phi_h}\|=\|\phi_h\|_\nu.$$ Then, by Remark $\ref{r2}$ we have,
	\begin{align*}
	\|\phi_h\|_{\nu,p}=&\|(\phi_h)^p\|_\nu^{1/p}=\|(\phi^p)_h\|_\nu^{1/p}=\|\phi^p\|_{\nu_h}^{1/p}=\|\phi\|_{\nu_h,p}=\|\phi\|_{\nu,p}.\qedhere
	\end{align*}
\end{proof}
The proof of the next theorem is analogous to \cite[Theorem 5.10]{B} and hence we omit it.
\begin{thm}\label{A3}Let $1\leq p<\infty$ and let $\nu\in\mathcal{M}(G,X)$ be a semivariation translation invariant vector measure with $\nu(G)\neq 0$. Then $L^p(\nu)\subset L^p(G).$ Further $\|f\|_p\leq\|f\|_{\nu,p}\|\nu(G)\|^{-1/p}.$
\end{thm}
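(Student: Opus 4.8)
The plan is to prove the norm inequality on the dense subspace $C(G)$ of $L^p(\nu)$ (available by Lemma \ref{dense}) and then to extend it. Fix $f\in C(G)$ and set $g=|f|^p\in C(G)$, so that $\int_G g\,dm_G=\|f\|_p^p$ and, by the definition of the $L^p(\nu)$-norm, $\|g\|_\nu=\|f\|_{\nu,p}^p$. Being bounded, $g\in L^1(\nu)$; and since $\nu$ is semivariation translation invariant, Proposition \ref{A4} gives $g\in L^1(\nu_{\tau_t})$ for every $t$. The whole theorem then reduces to the single estimate $\|\nu(G)\|\,\|f\|_p^p\le\|f\|_{\nu,p}^p$.

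The heart of the argument is an averaging identity: I claim
\[
\int_G\Big(\int_G g\,d\nu_{\tau_t}\Big)\,dm_G(t)=\Big(\int_G g\,dm_G\Big)\,\nu(G),
\]
where the left-hand side is the (Bochner) integral of the continuous, hence bounded and strongly measurable, $X$-valued map $t\mapsto\int_G g\,d\nu_{\tau_t}$. To verify it I would test against an arbitrary $x'\in X'$. Using $\int_G g\,d\nu_{\tau_t}=\int_G g(st^{-1})\,d\nu(s)$, pairing with $x'$ turns both sides into scalar integrals against the finite product $\langle\nu,x'\rangle\times m_G$, where classical Fubini applies to the bounded continuous integrand $(s,t)\mapsto g(st^{-1})$. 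The inner integral $\int_G g(st^{-1})\,dm_G(t)$ equals $\int_G g\,dm_G$ for every fixed $s$, because $t\mapsto st^{-1}$ preserves $m_G$ (inversion preserves Haar measure on the unimodular compact group $G$, followed by a left translation). Since the resulting scalar identity holds for all $x'$, the vector identity follows.

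With the identity in hand I would take norms. The triangle inequality for the Bochner integral yields
\[
\|\nu(G)\|\int_G g\,dm_G=\Big\|\int_G\Big(\int_G g\,d\nu_{\tau_t}\Big)dm_G(t)\Big\|\le\int_G\Big\|\int_G g\,d\nu_{\tau_t}\Big\|\,dm_G(t).
\]
For each $t$ the vector $\int_G g\,d\nu_{\tau_t}$ is the total mass of the measure $(\nu_{\tau_t})_g$, so its norm is at most the semivariation $\|(\nu_{\tau_t})_g\|=\|g\|_{\nu_{\tau_t}}$; and semivariation translation invariance, through Proposition \ref{A4}, gives $\|g\|_{\nu_{\tau_t}}=\|g\|_\nu$ for every $t$. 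As $m_G(G)=1$, the right-hand side equals $\|g\|_\nu=\|f\|_{\nu,p}^p$, which delivers $\|\nu(G)\|\,\|f\|_p^p\le\|f\|_{\nu,p}^p$, i.e.\ the asserted inequality for continuous $f$.

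Finally I would remove the continuity hypothesis. The inequality on $C(G)$ shows that the inclusion $C(G)\hookrightarrow L^p(G)$ is bounded for $\|\cdot\|_{\nu,p}$-to-$\|\cdot\|_p$, so by density it extends to a bounded operator $L^p(\nu)\to L^p(G)$. To identify this operator with the genuine inclusion, I would take $f_n\in C(G)$ with $f_n\to f$ in $L^p(\nu)$, pass to a subsequence converging $\nu$-a.e.\ to $f$ and, since it is $\|\cdot\|_p$-Cauchy, a further subsequence converging $m_G$-a.e.\ to its $L^p(G)$-limit; the two limits agree off a set that is simultaneously $\nu$-null and $m_G$-null, so the $L^p(G)$-limit is $f$ and the inequality passes to the limit. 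I expect the main obstacle to be the careful justification of the vector-valued Fubini step in the averaging identity, and secondarily the clean matching of the $\nu$-a.e.\ and $m_G$-a.e.\ equivalence classes in this last limiting argument.
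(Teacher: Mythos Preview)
Your proposal is correct, and the averaging identity
\[
\int_G\Big(\int_G g\,d\nu_{\tau_t}\Big)\,dm_G(t)=\Big(\int_G g\,dm_G\Big)\,\nu(G)
\]
is exactly the key step one expects here. The paper itself omits the proof, deferring to \cite[Theorem 5.10]{B} for the abelian case; your argument is the natural nonabelian adaptation of that proof, so the approaches coincide in spirit. The two subtleties you flag are handled adequately: the weak verification of the vector identity via $x'\in X'$ together with continuity of $t\mapsto\int_G\tau_tg\,d\nu$ (which follows from uniform continuity of $g$ and boundedness of the integration map $L^1(\nu)\to X$) legitimizes the Bochner step, and the subsequence extraction suffices for the density extension.
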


Our next result is an analogue of \cite[Proposition 2.41]{F}. As the proof of this proposition is just a routine check, we shall omit the proof.

\begin{prop}\label{A5}Let $\nu\in\mathcal{M}(G,X)$ be a semivariation translation invariant vector measure. If $f\in L^p(\nu),\ 1\leq p<\infty,$ then the mapping $s\mapsto\tau_sf$ is uniformly continuous from $G$ into $L^p(\nu).$
\end{prop}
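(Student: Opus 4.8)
The plan is to run a standard $\epsilon/3$ approximation argument: continuous functions are dense in $L^p(\nu)$, the $L^p(\nu)$-norm is translation invariant, and on a compact group a continuous function is automatically uniformly continuous, which is what upgrades pointwise continuity to the uniform continuity in the statement.

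First I would record the three ingredients. Since $\nu\in\mathcal{M}(G,X)$ is regular, Lemma \ref{dense} gives that $C(G)$ is dense in $L^p(\nu)$. Since $\nu$ is semivariation translation invariant, Proposition \ref{R1} gives that $L^p(\nu)$ is norm $\tau_s$-invariant for every $s\in G$; in particular $\tau_s f\in L^p(\nu)$ and $\|\tau_s\psi\|_{\nu,p}=\|\psi\|_{\nu,p}$ for all $\psi\in L^p(\nu)$ and all $s$. Finally I would note the elementary domination $\|\phi\|_{\nu,p}\leq\|\nu\|^{1/p}\|\phi\|_\infty$ for bounded $\phi$, which follows from
$$\|\phi\|_{\nu,p}^p=\sup_{x'\in B_{X'}}\int_G|\phi|^p\,d|\langle\nu,x'\rangle|\leq\|\phi\|_\infty^p\,\|\nu\|(G),$$
the semivariation $\|\nu\|=\|\nu\|(G)$ being finite because $\nu$ is $\sigma$-additive.

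Next, I would fix $f\in L^p(\nu)$ and $\epsilon>0$, and by density choose $g\in C(G)$ with $\|f-g\|_{\nu,p}<\epsilon/3$. For any $s,s'\in G$ the triangle inequality splits
$$\|\tau_s f-\tau_{s'} f\|_{\nu,p}\leq\|\tau_s(f-g)\|_{\nu,p}+\|\tau_s g-\tau_{s'} g\|_{\nu,p}+\|\tau_{s'}(g-f)\|_{\nu,p}.$$
By the norm translation invariance the first and third terms each equal $\|f-g\|_{\nu,p}<\epsilon/3$, uniformly in $s,s'$, so everything reduces to the middle term. For that I would invoke uniform continuity of $g$ on the compact group $G$: given $\eta>0$ there is a neighborhood $V$ of the identity $e$ with $|g(x)-g(y)|<\eta$ whenever $y^{-1}x\in V$. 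Since $\tau_s g(t)-\tau_{s'} g(t)=g(ts^{-1})-g(ts'^{-1})$ and the quotient $(ts'^{-1})^{-1}(ts^{-1})=s's^{-1}$ is independent of $t$, the condition $s's^{-1}\in V$ forces $\|\tau_s g-\tau_{s'} g\|_\infty<\eta$, hence $\|\tau_s g-\tau_{s'} g\|_{\nu,p}\leq\|\nu\|^{1/p}\eta$ by the domination above. Choosing $\eta=\epsilon/(3\|\nu\|^{1/p})$ then yields $\|\tau_s f-\tau_{s'} f\|_{\nu,p}<\epsilon$ whenever $s's^{-1}\in V$; as $V$ depends only on $\epsilon$ and the constraint is on $s's^{-1}$, this is exactly uniform continuity of $s\mapsto\tau_s f$.

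The only delicate point is bookkeeping: checking that the argument-difference $(ts'^{-1})^{-1}(ts^{-1})$ collapses to $s's^{-1}$ independently of $t$ (this is precisely what lets ordinary continuity become uniform), and confirming $\|\nu\|<\infty$ so the sup-norm domination is meaningful. Neither is a genuine obstacle, so I expect the substance to lie entirely in combining the translation invariance furnished by Proposition \ref{R1} with the density furnished by Lemma \ref{dense}.
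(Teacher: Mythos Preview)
Your argument is correct and is exactly the routine $\epsilon/3$ check the paper has in mind; the authors omit the proof entirely, referring to \cite[Proposition 2.41]{F}, whose proof in the scalar case proceeds by the same density-plus-translation-invariance scheme you wrote out. The only cosmetic point is the trivial case $\|\nu\|=0$, where $L^p(\nu)=\{0\}$ and there is nothing to prove.
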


\section{Convolution of functions associated to vector measures}
In this section, we define the notion of convolution of functions arising from $L^p$-spaces with respect to a vector measure. Our aim is to prove an analogue of the Young's inequality.

In Section 4, for $\nu\in M_{ac}(G,X),$ we observed that if $g\in L^1_w(\nu)$ then for each $x^\prime\in X^\prime$ there exists $h_{x^\prime}\in L^1(G)$ such that $gh_{x^\prime}\in L^1(G).$ With this observation in mind, we now define the convolution of two functions.
\begin{defn}\label{d1}
	Let $1\leq p\leq\infty.$ The convolution of the functions $f\in L^p(G)$ and $g\in L^1_w(\nu)$ with respect to the vector measure $\nu\in M_{ac}(G,X)$ is defined by $$f*_\nu g(x^\prime)=f*(gh_{x^\prime}),~x^\prime\in X^\prime.$$ 
\end{defn}
\begin{lem}\label{bddconvo}
	Let $1\leq p\leq\infty.$ If $f\in L^p(G)$ and $g\in L^1_w(\nu),$ then $f*_\nu g\in\mathcal{B}(X^\prime, L^p(G))$ with $\|f*_\nu g\|_{\mathcal{B}(X^\prime,L^p(G))}\leq\|f\|_p\|g\|_\nu.$ 
\end{lem}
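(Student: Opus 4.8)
The plan is to estimate the $\mathcal{B}(X', L^p(G))$-norm of $f *_\nu g$ by controlling, uniformly over $x' \in B_{X'}$, the $L^p(G)$-norm of the scalar convolution $f * (g h_{x'})$. By Definition \ref{d1}, for each $x' \in X'$ we have $(f *_\nu g)(x') = f * (g h_{x'})$, where $h_{x'} \in L^1(G)$ is the Radon--Nikodym derivative satisfying $d\langle \nu, x'\rangle = h_{x'}\, dm_G$, which exists since $\nu \in M_{ac}(G,X)$. The key point is that $g h_{x'} \in L^1(G)$, so the classical Young inequality applies to the convolution of $f \in L^p(G)$ with $g h_{x'} \in L^1(G)$.

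First I would fix $x' \in B_{X'}$ and invoke the classical Young inequality in the form $\|f * u\|_p \le \|f\|_p \|u\|_1$ for $f \in L^p(G)$ and $u \in L^1(G)$, applied with $u = g h_{x'}$. This yields
\begin{equation*}
\|(f *_\nu g)(x')\|_p = \|f * (g h_{x'})\|_p \le \|f\|_p \, \|g h_{x'}\|_1.
\end{equation*}
Next I would identify the factor $\|g h_{x'}\|_1$ using the observation recorded just before Definition \ref{wFT}: since $d\langle \nu, x'\rangle = h_{x'}\, dm_G$, one has
\begin{equation*}
\|g h_{x'}\|_1 = \int_G |g h_{x'}|\, dm_G = \int_G |g|\, d|\langle \nu, x'\rangle|.
\end{equation*}
For $x'$ in the closed unit ball $B_{X'}$, taking the supremum over all such $x'$ gives exactly the norm $\|g\|_\nu = \sup_{x' \in B_{X'}} \int_G |g|\, d|\langle \nu, x'\rangle|$ from the definition of $L^1_w(\nu)$.

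Combining these two estimates, for every $x' \in B_{X'}$ I obtain $\|(f *_\nu g)(x')\|_p \le \|f\|_p \|g\|_\nu$, and hence
\begin{equation*}
\|f *_\nu g\|_{\mathcal{B}(X', L^p(G))} = \sup_{x' \in B_{X'}} \|(f *_\nu g)(x')\|_p \le \|f\|_p \|g\|_\nu.
\end{equation*}
Linearity of $x' \mapsto (f *_\nu g)(x') = f * (g h_{x'})$ is immediate from the linearity of $x' \mapsto h_{x'}$, so the estimate simultaneously establishes boundedness and the desired norm bound, confirming $f *_\nu g \in \mathcal{B}(X', L^p(G))$.

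I do not anticipate a genuine obstacle here: the proof is essentially a one-line reduction to the classical Young inequality followed by the identification of $\|g h_{x'}\|_1$ with the weak integral. The only point requiring a little care is verifying that $x' \mapsto h_{x'}$ may be chosen so that the map $x' \mapsto f *_\nu g(x')$ is genuinely linear (not merely defined pointwise), which follows because the Radon--Nikodym derivative depends linearly on $x'$ and $f * (\cdot)$ is linear; this is where I would be most explicit to ensure the target really lands in $\mathcal{B}(X', L^p(G))$ rather than just a bounded set-valued assignment.
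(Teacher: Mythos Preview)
Your proof is correct and follows essentially the same route as the paper: fix $x'\in B_{X'}$, apply the classical Young inequality $\|f*(gh_{x'})\|_p\le\|f\|_p\|gh_{x'}\|_1$, and then bound $\|gh_{x'}\|_1\le\|g\|_\nu$. The paper compresses this into a single line and does not comment on linearity, but the argument is identical.
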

\begin{proof}
	Let $x^\prime\in B_{X^\prime}.$ Then we have $\|f*_\nu g(x^\prime)\|_p=\|f*(gh_{x^\prime})\|_p\leq\|f\|_p\|gh_{x^\prime}\|_1\leq\|f\|_p\|g\|_\nu.$
\end{proof}
\begin{rem}\label{R2}
	Let $1\leq p<\infty.$ Observe that, if $\nu\in M_{ac}(G,X)$ is a semivariation translation invariant vector measure with $\nu(G)\neq 0$ then, by Theorem \ref{A3}, Definition \ref{d1} makes sense even if $f\in L^p(\nu)$ and $g\in L^1_w(\nu).$ In fact, we have $f*_\nu g\in\mathcal{B}(X^\prime, L^p(G))$ with $$\|f*_\nu g\|_{\mathcal{B}(X^\prime,L^p(G))}\leq\|f\|_{\nu,p}\|g\|_\nu\|\nu(G)\|^{-1/p}.$$
\end{rem}
The following theorem improves the above remark.
\begin{thm}\label{T1}
	Let $1\leq p<\infty$ and let $\nu\in M_{ac}(G,X)$ be a semivariation translation invariant vector measure with $\nu(G)\neq 0$. If $f\in L^p(\nu)$ and $g\in L^1_w(\nu),$ then $f*_\nu g\in \mathcal{B}(X^\prime,L^p_w(\nu))$ with $\|f*_\nu g\|_{\mathcal{B}(X^\prime,L^p_w(\nu))}\leq\|f\|_{\nu,p}\|g\|_\nu.$ In particular, if $g\in L^1(\nu),$ then $f*_\nu g\in \mathcal{B}(X^\prime,L^p(\nu)).$
\end{thm}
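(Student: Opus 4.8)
The plan is to fix $x'\in X'$ and first reduce the statement to a concrete estimate. Since $\nu\in M_{ac}(G,X)\subset\mathcal{M}(G,X)$ is semivariation translation invariant with $\nu(G)\neq0$, Theorem~\ref{A3} gives $f\in L^p(\nu)\subset L^p(G)$, while $gh_{x'}\in L^1(G)$ as recorded in Section~4. Hence $f*_\nu g(x')=f*(gh_{x'})$ is a genuine element of $L^p(G)$, and the first assertion amounts to bounding $\int_G|f*(gh_{x'})|^p\,d|\langle\nu,y'\rangle|$ uniformly over $y'\in B_{X'}$. Because $x'\mapsto h_{x'}$ is linear (by uniqueness of Radon--Nikodym derivatives) and convolution is linear, the map $x'\mapsto f*_\nu g(x')$ is automatically linear, so only the norm bound must be established.

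For the main assertion I would apply Minkowski's integral inequality. Writing $f*(gh_{x'})(s)=\int_G \tau_tf(s)\,(gh_{x'})(t)\,dm_G(t)$ and treating $t$ as the parameter, for each fixed $y'\in B_{X'}$ one obtains
\[
\left(\int_G\bigl|f*(gh_{x'})\bigr|^p\,d|\langle\nu,y'\rangle|\right)^{1/p}\leq\int_G\|\tau_tf\|_{L^p(|\langle\nu,y'\rangle|)}\,|(gh_{x'})(t)|\,dm_G(t).
\]
Since $\|\tau_tf\|_{L^p(|\langle\nu,y'\rangle|)}\leq\|\tau_tf\|_{\nu,p}$ and $\nu$ is semivariation translation invariant, Proposition~\ref{R1} yields $\|\tau_tf\|_{\nu,p}=\|f\|_{\nu,p}$, so the right-hand side is at most $\|f\|_{\nu,p}\,\|gh_{x'}\|_1$. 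As $\|gh_{x'}\|_1=\int_G|g|\,d|\langle\nu,x'\rangle|\leq\|g\|_\nu\|x'\|$, taking the supremum over $y'\in B_{X'}$ gives $\|f*_\nu g(x')\|_{\nu,p}\leq\|f\|_{\nu,p}\|g\|_\nu\|x'\|$. This shows $f*_\nu g(x')\in L^p_w(\nu)$ and that $f*_\nu g\in\mathcal{B}(X',L^p_w(\nu))$ with the stated operator norm bound.

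For the refinement when $g\in L^1(\nu)$, I would upgrade weak integrability to integrability by realizing the convolution as a Bochner integral inside $L^p(\nu)$. By Proposition~\ref{A5}, $t\mapsto\tau_tf$ is continuous from $G$ into $L^p(\nu)$, and the identity $\|\tau_tf\,(gh_{x'})(t)\|_{\nu,p}=|(gh_{x'})(t)|\,\|f\|_{\nu,p}$ together with $gh_{x'}\in L^1(G)$ shows that $t\mapsto\tau_tf\,(gh_{x'})(t)$ is Bochner integrable in $L^p(\nu)$; denote its integral by $\Phi$. Applying the continuous inclusion $\iota:L^p(\nu)\hookrightarrow L^p(G)$ from Theorem~\ref{A3}, which commutes with the Bochner integral and with translation, identifies $\iota(\Phi)$ with $f*(gh_{x'})$ in $L^p(G)$; since $\nu\ll m_G$, the $m_G$-a.e. equality forces $f*_\nu g(x')=\Phi$ $\nu$-a.e., whence $f*_\nu g(x')\in L^p(\nu)$. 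The norm bound from the first part then transfers verbatim, giving $f*_\nu g\in\mathcal{B}(X',L^p(\nu))$. A parallel route uses density of $S(G)$ in $L^1(\nu)$, boundedness of $g\mapsto f*_\nu g(x')$ from $L^1_w(\nu)$ to $L^p_w(\nu)$ from the first part, and closedness of $L^p(\nu)$ in $L^p_w(\nu)$.

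I expect the principal obstacle to be the careful justification of the Minkowski step: verifying joint measurability of $(s,t)\mapsto f(st^{-1})(gh_{x'})(t)$ and, more importantly, passing from the translation invariance of the full $L^p(\nu)$ norm supplied by Proposition~\ref{R1} to the pointwise-in-$y'$ domination $\|\tau_tf\|_{L^p(|\langle\nu,y'\rangle|)}\leq\|f\|_{\nu,p}$, which is exactly what allows one inequality to hold uniformly across $B_{X'}$. The secondary delicate point is confirming that the $L^p(\nu)$-valued Bochner integral maps, under $\iota$, to the pointwise convolution in $L^p(G)$.
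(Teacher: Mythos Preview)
Your first part is essentially the paper's argument: Minkowski's integral inequality, the domination $\|\tau_tf\|_{L^p(|\langle\nu,y'\rangle|)}\leq\|\tau_tf\|_{\nu,p}$, and Proposition~\ref{R1}. The only cosmetic difference is that the paper writes the inner integral against $d\langle\nu,x'\rangle$ rather than against $(gh_{x'})\,dm_G$.

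For the second assertion your Bochner-integral argument is a genuinely different route. The paper proceeds by a \emph{double} density argument: choose simple $\phi_n\to f$ in $L^p(\nu)$ and simple $\psi_n\to g$ in $L^1(\nu)$; since $\phi_n\in L^\infty(G)$ and $\psi_n h_{x'}\in L^1(G)$, the convolution $\phi_n*(\psi_n h_{x'})$ is bounded, hence lies in $L^p(\nu)$, and one passes to the limit using the estimate from the first part together with closedness of $L^p(\nu)$ in $L^p_w(\nu)$. Your approach via Proposition~\ref{A5} and the Bochner integral in $L^p(\nu)$ avoids any approximation; in fact it only uses $gh_{x'}\in L^1(G)$, so it actually delivers $f*_\nu g(x')\in L^p(\nu)$ already for $g\in L^1_w(\nu)$, a mild sharpening the paper's density method does not reach. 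Your briefly sketched ``parallel route'' is close to the paper's, but note that approximating only $g$ is not quite enough: one must still know that $f*_\nu\psi_n(x')\in L^p(\nu)$ for simple $\psi_n$, and this is precisely why the paper also replaces $f$ by simple (hence bounded) functions.
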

\begin{proof}
	Let $x^\prime, y^\prime\in B_{X^\prime}$ and $f\in L^p(\nu).$ For $g\in L^1_w(\nu),$ by the Minkowski's Integral Inequality and Proposition \ref{R1} we have, 
	\begin{align*}
	\|f*_\nu g(x^\prime)\|_{L^p(|\langle\nu,y^\prime\rangle|)}=&\|f*(gh_{x^\prime})\|_{L^p(|\langle\nu,y^\prime\rangle|)}=\left\|\int_G\tau_sfg(s)\,d\langle\nu,x^\prime\rangle(s)\right\|_{L^p(|\langle\nu,y^\prime\rangle|)}\\\leq&\int_G\|\tau_sf\|_{L^p(|\langle\nu,y^\prime\rangle|)}|g(s)|\,d|\langle\nu,x^\prime\rangle|(s)\\\leq&\int_G\|\tau_sf\|_{\nu,p}|g(s)|\,d|\langle\nu,x^\prime\rangle|(s)\leq\|f\|_{\nu,p}\|g\|_\nu.
	\end{align*} 
	Therefore $f*_\nu g\in \mathcal{B}(X^\prime,L^p_w(\nu))$ with $\|f*_\nu g\|_{\mathcal{B}(X^\prime,L^p_w(\nu))}\leq\|f\|_{\nu,p}\|g\|_\nu.$
	
	Now, let $g\in L^1(\nu).$ Then there exists two sequences ($\phi_n$) and ($\psi_n$) of simple functions converging to $f$ in $L^p(\nu)$ and $g$ in $L^1(\nu)$ respectively. For each $n\in\mathbb{N},$ it is clear that $\phi_n\in L^\infty(G)$ and $\psi_nh_{x^\prime}\in L^1(G)$. Thus, it follows that $\phi_n*(\psi_nh_{x^\prime})$ is bounded and hence  $\phi_n*_\nu\psi_n(x^\prime)=\phi_n*(\psi_nh_{x^\prime})\in L^p(\nu).$ Further, 
	\begin{align*}
	\|\phi_n*_\nu\psi_n(x^\prime)-f*_\nu g(x^\prime)\|_{\nu,p}\leq&\|(\phi_n-f)*_\nu\psi_n(x^\prime)\|_{\nu,p}+\|f*_\nu(\psi_n-g)(x^\prime)\|_{\nu,p}\\\leq&\|\phi_n-f\|_{\nu,p}\|\psi_n\|_\nu+\|f\|_{\nu,p}\|\psi_n-g\|_\nu.
	\end{align*}
	Thus the sequence ($\phi_n*_\nu\psi_n(x^\prime)$) converges to $f*_\nu g(x^\prime)$ in $\|\cdot\|_{\nu,p}$ norm. Since $L^p(\nu)$ is a closed subspace of $L^p_w(\nu),$ it follows that $f*_\nu g\in \mathcal{B}(X^\prime, L^p(\nu)).$
\end{proof}
Here is an analogue of the Young's inequality for the convolution of functions with respect to a vector measure.
\begin{cor}\label{c1}Let $1\leq p<\infty$ and let $\nu\in M_{ac}(G,X)$ be a semivariation translation and inversion invariant vector measure with $\nu(G)\neq 0$. If $f\in L^p(\nu)$ and $g\in L^q_w(\nu)$ with $1\leq q\leq p^\prime,$ then $f*_\nu g\in \mathcal{B}(X^\prime,L^r_w(\nu))$ with $\|f*_\nu g\|_{\mathcal{B}(X^\prime,L^r_w(\nu))}\leq\|f\|_{\nu,p}\|g\|_{\nu,q}$ where $\frac{1}{p}+\frac{1}{q}=1+\frac{1}{r}.$ In particular, if $g\in L^q(\nu),$ then $f*_\nu g\in \mathcal{B}(X^\prime,L^r(\nu)).$
\end{cor}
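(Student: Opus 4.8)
The plan is to reduce the bound to a pointwise, classical Young-type estimate for the ordinary convolution and then to take the two suprema defining the $L^r_w(\nu)$ norm. Fix $x',y'\in B_{X'}$ and write $\mu=|\langle\nu,x'\rangle|$ and $\lambda=|\langle\nu,y'\rangle|$. Since $f*_\nu g(x')=f*(gh_{x'})$ and $d\langle\nu,x'\rangle=h_{x'}\,dm_G$, one has the pointwise bound
\[
|f*_\nu g(x')(t)|\le\int_G|\tau_sf(t)|\,|g(s)|\,d\mu(s),\qquad t\in G.
\]
With the convention $f^p=|f|^p$, set $\beta=\tfrac{pr}{r-p}$ and $\gamma=\tfrac{qr}{r-q}$; the Young relation $\tfrac1p+\tfrac1q=1+\tfrac1r$ is exactly $\tfrac1r+\tfrac1\beta+\tfrac1\gamma=1$, and $1\le q\le p'$ forces $p\le r$ and $q\le r$, so these exponents are admissible. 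Applying the three-factor Hölder inequality in $L^r(\mu),L^\beta(\mu),L^\gamma(\mu)$ to the splitting $|\tau_sf(t)||g(s)|=(|\tau_sf(t)|^p|g(s)|^q)^{1/r}\,|\tau_sf(t)|^{p/\beta}\,|g(s)|^{q/\gamma}$ gives
\[
|f*_\nu g(x')(t)|\le\Big(\int_G|\tau_sf(t)|^p|g(s)|^q\,d\mu(s)\Big)^{1/r}\Big(\int_G|\tau_sf(t)|^p\,d\mu(s)\Big)^{1/\beta}\|g\|_{\nu,q}^{q/\gamma},
\]
where the last factor uses $\int_G|g|^q\,d\mu\le\|g\|_{\nu,q}^q$.

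The crux is to control the middle factor uniformly in $t$. Here I would use that the map $s\mapsto ts^{-1}$ is the composition of an inversion and a right translation: writing $F=|f|^p\in L^1(\nu)$ and $\tilde F=F_i$, the function $s\mapsto|f(ts^{-1})|^p$ equals $\tau_t\tilde F$. Norm inversion- and translation-invariance of $L^1(\nu)$ (Proposition \ref{R1} with $p=1$, available since $\nu$ is semivariation translation and inversion invariant) then give $\|\tau_t\tilde F\|_\nu=\|\tilde F\|_\nu=\|F\|_\nu=\|f\|_{\nu,p}^p$, and hence $\int_G|f(ts^{-1})|^p\,d\mu(s)\le\|f\|_{\nu,p}^p$ for every $t$. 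This is precisely the step forcing the inversion hypothesis, which was not needed in Theorem \ref{T1} (there $q=1$, $\beta=\infty$, and the middle factor is absent). Substituting this bound, raising to the $r$-th power, integrating $d\lambda(t)$ and applying Tonelli reduces matters to
\[
\int_G|g(s)|^q\Big(\int_G|f(ts^{-1})|^p\,d\lambda(t)\Big)d\mu(s).
\]
For fixed $s$ the inner integrand is the right translate $\tau_s F$, so norm translation invariance alone gives $\int_G|f(ts^{-1})|^p\,d\lambda(t)\le\|f\|_{\nu,p}^p$; together with $\int_G|g|^q\,d\mu\le\|g\|_{\nu,q}^q$ and the bookkeeping $\tfrac{pr}{\beta}=r-p$, $\tfrac{qr}{\gamma}=r-q$, this yields $\int_G|f*_\nu g(x')|^r\,d\lambda\le\|f\|_{\nu,p}^r\|g\|_{\nu,q}^r$. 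Taking the $r$-th root and the supremum over $y'\in B_{X'}$ (then over $x'\in B_{X'}$) proves $f*_\nu g\in\mathcal{B}(X',L^r_w(\nu))$ with the asserted norm bound.

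Two points remain. The endpoint $q=p'$ (that is $r=\infty$) is not covered by the $r$-th power argument and I would treat it directly: ordinary Hölder in $d\mu$ together with the same uniform estimate $\int_G|f(ts^{-1})|^p\,d\mu\le\|f\|_{\nu,p}^p$ gives $\|f*_\nu g(x')\|_\infty\le\|f\|_{\nu,p}\|g\|_{\nu,p'}$. Finally, for the ``in particular'' statement, when $g\in L^q(\nu)$ I would mimic the second half of the proof of Theorem \ref{T1}: choose simple $\phi_n\to f$ in $L^p(\nu)$ and $\psi_n\to g$ in $L^q(\nu)$, note that $\phi_n*_\nu\psi_n(x')=\phi_n*(\psi_nh_{x'})$ is bounded and hence lies in $L^r(\nu)$, and use the already established weak-norm bilinear estimate on the differences to conclude that $\phi_n*_\nu\psi_n(x')\to f*_\nu g(x')$ in $L^r_w(\nu)$; since $L^r(\nu)$ is closed in $L^r_w(\nu)$, the limit lies in $L^r(\nu)$. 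The main obstacle throughout is the uniform-in-$t$ estimate on $\int_G|f(ts^{-1})|^p\,d|\langle\nu,x'\rangle|(s)$; everything else is exponent bookkeeping and a routine Tonelli argument.
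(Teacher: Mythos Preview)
Your argument is correct, but it proceeds by a genuinely different route from the paper's. The paper fixes $x'\in B_{X'}$, views $g\mapsto f*_\nu g(x')$ as a linear operator $T_{f,x'}$, and establishes only the two endpoint estimates: $T_{f,x'}:L^1_w(\nu)\to L^p_w(\nu)$ (this is Theorem~\ref{T1}) and $T_{f,x'}:L^{p'}_w(\nu)\to L^\infty_w(\nu)$ (this is exactly your $r=\infty$ computation, using inversion invariance via $\|\tau_t\tilde f\|_{\nu,p}=\|f\|_{\nu,p}$). The intermediate cases $1<q<p'$ are then obtained in one stroke by invoking the complex interpolation theorem for $L^p$-spaces of a vector measure \cite[Theorem 3.4]{FMNP}; the ``in particular'' clause is handled the same way, interpolating the $L^1(\nu)\to L^p(\nu)$ and $L^{p'}(\nu)\to L^\infty(\nu)$ maps. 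By contrast, you run the classical three-exponent H\"older proof of Young's inequality directly at the level of the measures $|\langle\nu,x'\rangle|$ and $|\langle\nu,y'\rangle|$, with the invariance hypotheses entering through the two uniform bounds $\int_G|f(ts^{-1})|^p\,d|\langle\nu,x'\rangle|(s)\le\|f\|_{\nu,p}^p$ and $\int_G|f(ts^{-1})|^p\,d|\langle\nu,y'\rangle|(t)\le\|f\|_{\nu,p}^p$. Your approach is entirely elementary and self-contained (no external interpolation theorem needed), at the cost of the exponent bookkeeping and the separate handling of the degenerate cases $q=1$ and $q=p'$; the paper's approach is shorter and more structural but imports the interpolation machinery of \cite{FMNP}. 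Both arguments use the inversion hypothesis at exactly the same place, namely to control $s\mapsto f(ts^{-1})$ uniformly in $t$.
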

\begin{proof}
	Let $x^\prime\in B_{X^\prime}$ and $f\in L^p(\nu).$ Let $T_{f,x^\prime}$ denote a linear operator on some function space given by $T_{f,x^\prime}(g)=f*_\nu g(x^\prime).$ Then by Theorem \ref{T1}, $T_{f,x^\prime}$ is a bounded linear operator from $L^1_w(\nu)$ to $L^p_w(\nu)$ with $\|T_{f,x^\prime}\|_{\mathcal{B}(L^1_w(\nu),L^p_w(\nu))}\leq\|f\|_{\nu,p}.$ Now let $g\in L^{p^\prime}_w(\nu).$ Then, by the H\"older's inequality and Proposition \ref{R1} we have,
	\begin{align*}
	|f*_\nu g(x^\prime)(t)|=&|f*(gh_{x^\prime})|=\left|\int_Gf(ts^{-1})g(s)\,d\langle\nu,x^\prime\rangle\right|\\=&\int_G|\tau_t\tilde{f}(s)||g(s)|\,d|\langle\nu,x^\prime\rangle|\leq\|\tau_t\tilde{f}\|_{\nu,p}\|g\|_{\nu,p^\prime}=\|f\|_{\nu,p}\|g\|_{\nu,p^\prime}.
	\end{align*}
	It follows that $T_{f,x^\prime}$ is also a bounded linear operator from $L^{p^\prime}(\nu)$ to $L^\infty_w(\nu)$ with $\|T_{f,x^\prime}\|_{\mathcal{B}(L^{p^\prime}(\nu),L^\infty_w(\nu))}\leq\|f\|_{\nu,p}.$ Hence the proof follows by an application of the interpolation theorem \cite[Theorem 3.4]{FMNP}. By Theorem \ref{T1}, the second part also follows similarly as above.
\end{proof}
\begin{prop}\label{A6}
	Let $\nu\in M_{ac}(G,X)$ be a semivariation translation invariant vector measure with $\nu(G)\neq 0$. If $f,g\in L^1(\nu)$ then for $\phi\in L^\infty(G)\mbox{ and }x^\prime\in X^\prime,$
	$$\langle f*_\nu g(x^\prime),\phi\rangle=\left\langle\int_G(\tilde{f}*\phi)g\,d\nu,x^\prime\right\rangle.$$
\end{prop}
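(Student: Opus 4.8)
The plan is to reduce the identity to an application of the classical Fubini theorem on $G\times G$ together with the definition of the $\nu$-integral. First I would unwind the left-hand side. Since $\nu$ is semivariation translation invariant with $\nu(G)\neq 0$, Theorem \ref{A3} gives $f\in L^1(\nu)\subset L^1(G)$, so that $f*_\nu g(x^\prime)=f*(gh_{x^\prime})\in L^1(G)$ and its pairing with $\phi\in L^\infty(G)$ is the usual bilinear $L^1$--$L^\infty$ duality. Writing out both the convolution and the pairing as Haar integrals, and using $d\langle\nu,x^\prime\rangle=h_{x^\prime}\,dm_G$, the left-hand side becomes
\begin{equation*}
\langle f*_\nu g(x^\prime),\phi\rangle=\int_G\left(\int_G f(ts^{-1})g(s)\,d\langle\nu,x^\prime\rangle(s)\right)\phi(t)\,dm_G(t).
\end{equation*}

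Next I would interchange the order of integration. After Fubini the variable $s$ becomes the outer one, and the inner integral is $\int_G f(ts^{-1})\phi(t)\,dm_G(t)$. Comparing with the definition $\tilde{f}(t)=f(t^{-1})$, one checks directly that $(\tilde{f}*\phi)(s)=\int_G f(rs^{-1})\phi(r)\,dm_G(r)$, so this inner integral equals $(\tilde{f}*\phi)(s)$; no change of variables is needed, only the matching of the two expressions. Hence
\begin{equation*}
\langle f*_\nu g(x^\prime),\phi\rangle=\int_G(\tilde{f}*\phi)(s)\,g(s)\,d\langle\nu,x^\prime\rangle(s).
\end{equation*}
Since $\tilde{f}\in L^1(G)$ and $\phi\in L^\infty(G)$, the function $\tilde{f}*\phi$ is bounded, hence $\nu$-a.e.\ bounded because $\nu\ll m_G$; therefore $(\tilde{f}*\phi)g\in L^1(\nu)$ as the product of an $L^\infty(\nu)$ function with an $L^1(\nu)$ function. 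The definition of the vector integral then rewrites the last display as $\big\langle\int_G(\tilde{f}*\phi)g\,d\nu,\,x^\prime\big\rangle$, which is the asserted identity.

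The step requiring care is the Fubini interchange, and this is where I expect the only real work to lie. I would justify it by a Tonelli estimate: bounding $|\phi(t)|\leq\|\phi\|_\infty$ and using the right invariance of the (unimodular) Haar measure on the compact group $G$, the double integral of $|f(ts^{-1})g(s)h_{x^\prime}(s)|$ over $G\times G$ factors, the inner integral $\int_G|f(ts^{-1})|\,dm_G(t)$ equals $\|f\|_1$ for every $s$, and the outer integral is $\|gh_{x^\prime}\|_1$. Thus the integrand is absolutely integrable on $G\times G$ with total mass at most $\|\phi\|_\infty\|f\|_1\|gh_{x^\prime}\|_1<\infty$, so Fubini applies and the interchange is legitimate.
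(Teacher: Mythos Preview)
Your proof is correct and follows exactly the approach of the paper: use Theorem~\ref{A3} to get $f\in L^1(G)$, observe that $\tilde{f}*\phi\in L^\infty(G)$ so that $(\tilde{f}*\phi)g\in L^1(\nu)$, and reduce the identity to Fubini's theorem. The paper's own proof is in fact a one-line sketch (``the proof of this follows from the Fubini's theorem''), and you have carefully supplied the details of the interchange and its Tonelli justification that the authors omit.
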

\begin{proof}
	Let $f\in L^1(\nu).$ Then by Theorem \ref{A3}, $f\in L^1(G).$ For $\phi\in L^\infty(G)$ we have $\tilde{f}*\phi\in L^\infty(G).$ Therefore $(\tilde{f}*\phi)g\in L^1(\nu).$ Then the proof of this follows from the Fubini's theorem.
\end{proof}
Now we find the Fourier transform of the convolution.
\begin{thm}
	Let $\nu\in M_{ac}(G,X)$ be a semivariation translation invariant vector measure with $\nu(G)\neq 0$ If $f,g\in L^1(\nu),$ then for $[\pi]\in\widehat{G}$ and $x^\prime\in X^\prime,$
	$$\widehat{f*_\nu g(x^\prime)}(\pi)=d_\pi\langle\langle\widehat{g}^\nu(\pi)\widehat{f}(\pi),x^\prime\rangle\rangle.$$
	Further, for $1\leq i,j\leq d_\pi,$ the $(i,j)^{\mbox{th}}$-entry of $\widehat{g}^\nu(\pi)\widehat{f}(\pi)$ is given by  $$(\widehat{g}^\nu(\pi)\widehat{f}(\pi))_{ij}=\frac{1}{d_\pi^2}\int_G\tilde{f}*\overline{\pi(t)_{ji}}g(t)\,d\nu(t).$$
\end{thm}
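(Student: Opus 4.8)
The plan is to reduce the whole statement to the classical convolution theorem on $L^1(G)$, which becomes available because translation invariance forces $L^1(\nu)\subseteq L^1(G)$. First I would record that, by Theorem \ref{A3}, both $f$ and $g$ belong to $L^1(G)$, and that for each $x^\prime\in X^\prime$ one has $gh_{x^\prime}\in L^1(G)$, since $\int_G|gh_{x^\prime}|\,dm_G=\int_G|g|\,d|\langle\nu,x^\prime\rangle|\leq\|g\|_\nu\|x^\prime\|<\infty$. Hence $f*_\nu g(x^\prime)=f*(gh_{x^\prime})$ is a genuine convolution of two $L^1(G)$-functions, and the classical identity $\widehat{a*b}(\pi)=d_\pi\widehat{b}(\pi)\widehat{a}(\pi)$ from Section 2 applies with $a=f$ and $b=gh_{x^\prime}$, giving $$\widehat{f*_\nu g(x^\prime)}(\pi)=d_\pi\,\widehat{gh_{x^\prime}}(\pi)\,\widehat{f}(\pi).$$

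Next I would invoke Remark \ref{E3}, which identifies $\widehat{gh_{x^\prime}}(\pi)$ with the matrix pairing $\langle\langle\widehat{g}^\nu(\pi),x^\prime\rangle\rangle$. Since $\widehat{f}(\pi)\in M_{d_\pi}$ is a scalar matrix and each functional $\langle\cdot,x^\prime\rangle$ is linear, evaluating the pairing entrywise commutes with right multiplication by $\widehat{f}(\pi)$; concretely, the $(i,j)$-entry of both $\langle\langle\widehat{g}^\nu(\pi),x^\prime\rangle\rangle\widehat{f}(\pi)$ and $\langle\langle\widehat{g}^\nu(\pi)\widehat{f}(\pi),x^\prime\rangle\rangle$ equals $\sum_k\langle\widehat{g}^\nu(\pi)_{ik},x^\prime\rangle\widehat{f}(\pi)_{kj}$. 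Substituting this into the displayed identity yields the first assertion $\widehat{f*_\nu g(x^\prime)}(\pi)=d_\pi\langle\langle\widehat{g}^\nu(\pi)\widehat{f}(\pi),x^\prime\rangle\rangle$.

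For the entry formula I would compute the scalar function $\tilde f*\overline{\pi(\cdot)_{ji}}$ explicitly and then integrate against $\nu$. Using $\tilde f(ts^{-1})=f(st^{-1})$ one has $(\tilde f*\overline{\pi(\cdot)_{ji}})(t)=\int_G f(st^{-1})\overline{\pi(s)_{ji}}\,dm_G(s)$; inserting the multiplicativity relation $\overline{\pi(s)_{ji}}=\sum_k\overline{\pi(st^{-1})_{jk}}\,\overline{\pi(t)_{ki}}$, substituting $u=st^{-1}$ (the bi-invariance of $m_G$ on the compact group $G$ justifies this), and recognizing $\int_G f(u)\overline{\pi(u)_{jk}}\,dm_G(u)=d_\pi\widehat{f}(\pi)_{kj}$ gives $(\tilde f*\overline{\pi(\cdot)_{ji}})(t)=d_\pi\sum_k\overline{\pi(t)_{ki}}\widehat{f}(\pi)_{kj}$. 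Multiplying by $g(t)$, integrating against $\nu$, pulling the finite sum outside the integral, and using $\int_G g(t)\overline{\pi(t)_{ki}}\,d\nu(t)=d_\pi\widehat{g}^\nu(\pi)_{ik}$ (Remark \ref{RforFTM}) collapses the result to $d_\pi^2\sum_k\widehat{g}^\nu(\pi)_{ik}\widehat{f}(\pi)_{kj}=d_\pi^2(\widehat{g}^\nu(\pi)\widehat{f}(\pi))_{ij}$, which is exactly the claimed formula. The only points needing care are the index-and-conjugate bookkeeping above, and checking that the $\nu$-integral is legitimate: $\tilde f*\overline{\pi(\cdot)_{ji}}\in L^\infty(G)$ (a convolution of an $L^1$-function with a bounded one) and $g\in L^1(\nu)$, so their product lies in $L^1(\nu)$, exactly as in Proposition \ref{A6}. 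I expect this bookkeeping, rather than any genuine analytic difficulty, to be the main obstacle.
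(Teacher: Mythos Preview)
Your argument is correct. For the first identity, your approach coincides with the paper's: both reduce to the classical convolution theorem via $f*_\nu g(x^\prime)=f*(gh_{x^\prime})$ and then identify $\widehat{gh_{x^\prime}}(\pi)$ with $\langle\langle\widehat{g}^\nu(\pi),x^\prime\rangle\rangle$ entrywise.

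For the entry formula, you take a different route than the paper. The paper invokes Proposition~\ref{A6} with $\phi=\overline{\pi(\cdot)_{ji}}$ to rewrite $\langle\int_G(\tilde f*\overline{\pi(\cdot)_{ji}})g\,d\nu,x^\prime\rangle$ as $\langle f*_\nu g(x^\prime),\overline{\pi(\cdot)_{ji}}\rangle=d_\pi\widehat{f*_\nu g(x^\prime)}(\pi)_{ij}$, and then feeds in the first identity already established; the formula thus falls out of a duality/Fubini step together with the first half of the theorem. You instead compute $\tilde f*\overline{\pi(\cdot)_{ji}}$ directly via the multiplicativity relation $\overline{\pi(s)_{ji}}=\sum_k\overline{\pi(st^{-1})_{jk}}\,\overline{\pi(t)_{ki}}$ and a change of variable, and then integrate against $\nu$. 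Your computation is self-contained and avoids Proposition~\ref{A6} entirely, at the cost of a little more index bookkeeping; the paper's argument is shorter once Proposition~\ref{A6} is in hand and makes the dependence on the first identity transparent. Both are perfectly valid.
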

\begin{proof}
	Let $f,g\in L^1(\nu).$ Then for $x^\prime\in X^\prime$ and $[\pi]\in\widehat{G},$ we have, 
	\begin{align*}
	\widehat{f*_\nu g(x^\prime)}(\pi)_{ij}=&\widehat{f*gh_{x^\prime}}(\pi)_{ij}=(d_\pi\widehat{gh_{x^\prime}}(\pi)\widehat{f}(\pi))_{ij}=d_\pi\sum_{k=1}^{d_\pi}\widehat{gh_{x^\prime}}(\pi)_{ik}\widehat{f}(\pi)_{kj}\\=&d_\pi\sum_{k=1}^{d_\pi}\langle\widehat{g}^\nu(\pi)_{ik}
	,x^\prime\rangle\widehat{f}(\pi)_{kj}
	=d_\pi\sum_{k=1}^{d_\pi}\langle\widehat{g}^\nu(\pi)_{ik}\widehat{f}(\pi)_{kj},x^\prime\rangle\\=&d_\pi\left\langle\sum_{k=1}^{d_\pi}\widehat{g}
	^\nu(\pi)_{ik}\widehat{f}(\pi)_{kj},x^\prime\right\rangle=d_\pi\langle(\widehat{g}^\nu(\pi)\widehat{f}(\pi))_{ij},x^\prime\rangle.
	\end{align*}
	Hence $\widehat{f*_\nu g(x^\prime)}(\pi)=[d_\pi\langle(\widehat{g}^\nu(\pi)\widehat{f}(\pi))_{ij},x^\prime\rangle]=d_\pi\langle\langle\widehat{g}^\nu(\pi)\widehat{f}(\pi),x^\prime\rangle\rangle.$ By taking $\phi(\cdot)=\overline{\pi(\cdot)_{ji}}$ in Proposition \ref{A6} we have,
	\begin{align*}
	\left\langle\int_G(\tilde{f}*\overline{\pi(\cdot)_{ji}})g\,d\nu,x^\prime
	\right\rangle=&\langle f*_\nu g(x^\prime),\overline{\pi(\cdot)_{ji}}\rangle=\int_Gf*_\nu g(x^\prime)(t)\overline{\pi(t)_{ji}}\,dm_G(t)\\=&d_\pi\widehat{f*_\nu g(x^\prime)}(\pi)_{ij}=\langle d_\pi^2(\widehat{g}^\nu(\pi)\widehat{f}(\pi))_{ij},x^\prime\rangle.\qedhere
	\end{align*}
\end{proof}
Now we define the vector valued convolution.
\begin{defn}The vector valued convolution, with respect to $\nu,$ of two measurable functions $f$ and $g,$ denoted $f*^\nu g,$ is defined as $$f*^\nu g(t)=\int_Gf(ts^{-1})g(s)d\nu(s),$$ provided that the mapping $s\mapsto f(ts^{-1})g(s)$ belongs to $L^1(\nu)$ for $m_G$-almost everywhere $t\in G.$
\end{defn}
\begin{rem}\label{E1}
	Let $\nu\in M_{ac}(G,X).$ If $f\in L^p(G),\ 1\leq p<\infty$ and $g\in L^1_w(\nu)$ are such that the mapping $s\mapsto f(ts^{-1})g(s)\in L^1(\nu)$ for $m_G$-almost everywhere $t\in G$ then for $x^\prime\in X^\prime,$ we have $f*_\nu g(x^\prime)=\langle f*^\nu g,x^\prime\rangle.$
\end{rem}
Before, we proceed to the main results, here are some definitions.

Let $1\leq p<\infty.$ A function $f:G\rightarrow X$ is said to be Dunford $p$-integrable (for $p=1$ we say Dunford integrable) if $\langle f,x^\prime\rangle\in L^p(G),~x^\prime\in X^\prime.$ We denote by $L^p_w(G,X)$ the space of Dunford $p$-integrable functions equipped with the norm $$\|f\|_{L^p_w(G,X)}=\underset{x^\prime\in B_{X^\prime}}{\sup}\|\langle f,x^\prime\rangle\|_p.$$ A Dunford $p$-integrable function $f$ is said to be Pettis $p$-integrable (for $p=1$ we say Pettis integrable) if for each $A\in\mathfrak{B}(G)$ there exists a unique $x_A\in X$ such that $\int_A\langle f,x^\prime\rangle\,dm_G=\langle x_A,x^\prime\rangle,~x^\prime\in X^\prime.$ The vector $x_A$ is denoted by $(P)\int_Af\,dm_G.$ For more information on Dunford and Pettis integrability, see \cite{DU, T}.

\begin{thm}\label{T2}
	Let $\nu\in M_{ac}(G,X).$ If $f\in L^1(G)$ and $g\in L^1_w(\nu)$ such that the mapping $s\mapsto f(ts^{-1})g(s)$ is in $L^1(\nu)$ for $m_G$-almost everywhere $t\in G,$ then $f*^\nu g$ is Dunford integrable with $\|f*^\nu g\|_{L^1_w(G,X)}\leq\|f\|_1\|g\|_\nu.$ In particular, if $g\in L^1(\nu),$ then $f*^\nu g$ is Pettis integrable with $$(P)\int_Gf*^\nu g\,dm_G=\int_Gf\,dm_G\int_Gg\,d\nu.$$
\end{thm}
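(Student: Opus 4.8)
The plan is to reduce everything to the scalar picture by pairing with functionals $x'\in X'$. The key identity, supplied by Remark \ref{E1}, is that for each $x'\in X'$ one has $\langle f*^\nu g, x'\rangle = f*_\nu g(x') = f*(gh_{x'})$, where $gh_{x'}\in L^1(G)$ because $g\in L^1_w(\nu)$. For the Dunford assertion I would simply quote Lemma \ref{bddconvo} with $p=1$: it gives $f*_\nu g\in\mathcal{B}(X',L^1(G))$ with $\|f*_\nu g\|_{\mathcal{B}(X',L^1(G))}\le\|f\|_1\|g\|_\nu$. Via the identification above this says exactly that $\langle f*^\nu g, x'\rangle\in L^1(G)$ for every $x'$, i.e. $f*^\nu g$ is Dunford integrable, and that $\sup_{x'\in B_{X'}}\|\langle f*^\nu g, x'\rangle\|_1\le\|f\|_1\|g\|_\nu$, which is the claimed norm bound.

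For the second part I would assume $g\in L^1(\nu)$, fix $A\in\mathfrak{B}(G)$, and compute the Dunford integral over $A$ explicitly. Starting from $\int_A\langle f*^\nu g, x'\rangle\,dm_G=\int_A\big(f*(gh_{x'})\big)(t)\,dm_G(t)$, I would expand the inner convolution and interchange the order of integration. Fubini is justified by Tonelli, since the absolute double integral is dominated by $\|f\|_1\|gh_{x'}\|_1<\infty$. This produces $\int_G g(s)h_{x'}(s)F_A(s)\,dm_G(s)$, where $F_A(s)=\int_A f(ts^{-1})\,dm_G(t)$.

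The next step is to recognise $F_A$ as a multiplier. By right-invariance of $m_G$ (the group, being compact, is unimodular) one gets $|F_A(s)|\le\int_G|f(ts^{-1})|\,dm_G(t)=\|f\|_1$, so $F_A\in L^\infty(G)$; since $\nu\in M_{ac}(G,X)$ forces every $m_G$-null set to be $\nu$-null, $F_A$ also lies in $L^\infty(\nu)$. As $L^1(\nu)$ is a module over $L^\infty(\nu)$, the product $gF_A$ belongs to $L^1(\nu)$, and using $d\langle\nu,x'\rangle=h_{x'}\,dm_G$ together with the definition of the $\nu$-integral I can rewrite the quantity above as $\int_G gF_A\,d\langle\nu,x'\rangle=\big\langle\int_G gF_A\,d\nu, x'\big\rangle$. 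Thus $x_A:=\int_G gF_A\,d\nu\in X$ satisfies $\int_A\langle f*^\nu g, x'\rangle\,dm_G=\langle x_A, x'\rangle$ for all $x'$, with uniqueness automatic since $X'$ separates the points of $X$; this is precisely Pettis integrability. Taking $A=G$, right-invariance gives that $F_G(s)=\int_G f\,dm_G$ is constant in $s$, so the scalar pulls out and $x_G=\big(\int_G f\,dm_G\big)\int_G g\,d\nu$, which is the stated formula.

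The routine but genuinely load-bearing points are the Fubini interchange (controlled by Tonelli as above) and the verification that $gF_A\in L^1(\nu)$, i.e. that $F_A\in L^\infty(\nu)$ and that $L^1(\nu)$ is stable under multiplication by bounded functions; once these are in place the computation is mechanical. I expect the $L^\infty(\nu)$-module property and the clean passage between $dm_G$ and $d\langle\nu,x'\rangle$ to be the only spots needing care, and both follow from the absolute continuity of $\nu$ with respect to $m_G$.
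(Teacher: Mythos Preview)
Your proposal is correct and follows essentially the same route as the paper's own proof: both use Remark~\ref{E1} together with Lemma~\ref{bddconvo} (at $p=1$) for the Dunford part, and for the Pettis part both observe that $F_A(s)=\int_A f(ts^{-1})\,dm_G(t)$ is bounded measurable, set $x_A=\int_G gF_A\,d\nu\in X$, and verify $\int_A\langle f*^\nu g,x'\rangle\,dm_G=\langle x_A,x'\rangle$ via Fubini. Your write-up is slightly more explicit about the justifications (Tonelli for the interchange, the $L^\infty(\nu)$-module property of $L^1(\nu)$), but the architecture is identical to the paper's.
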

\begin{proof}
	Let $x^\prime\in X^\prime.$ Note that, by Radon-Nikodym theorem, there exists $h_{x^\prime}\in L^1(G)$ such that $d\langle\nu,x^\prime\rangle=h_{x^\prime}dm_G.$ Then by Remark \ref{E1}, $\langle f*^\nu g(t),x^\prime\rangle=f*gh_{x^\prime}(t),$ $m_G$-a.e. $t\in G$. Therefore the mapping $t\mapsto\langle f*^\nu g(t),x^\prime \rangle$ is measurable. Further, by Lemma \ref{bddconvo}, we have, $\|\langle f*^\nu g,x^\prime \rangle\|_1\leq\|f\|_1\|g\|_\nu\|x^\prime\|_{X^\prime}.$ Thus $f*^\nu g$ is Dunford integrable and $\|f*^\nu g\|_{L^1_w(G,X)}\leq\|f\|_1\|g\|_\nu.$
	
	We now prove the second statement. Let $g\in L^1(\nu).$ Note that for any $f\in L^1(G)$ and $A\in\mathfrak{B}(G),$ the mapping $s\mapsto\int_A f(ts^{-1})\ dm_G(t)$ is a bounded measurable function and therefore $$\int_G\int_A f(ts^{-1})\ dm_G(t)g(s)\,d\nu(s)\in X.$$ Let $x_A=\int_G\int_A f(ts^{-1})\,dm_G(t)g(s)\,d\nu(s).$ It follows, by an application of the Fubini's theorem, that $$\int_A\langle f*^\nu g,x^\prime \rangle\,dm_G=\langle x_A,x^\prime \rangle.$$ Thus $f*^\nu g$ is Pettis integrable and 
	\begin{equation*}
	(P)\int_Gf*^\nu g\,dm_G=x_G=\int_Gf\,dm_G\int_Gg\,d\nu.\qedhere
	\end{equation*}
\end{proof}
\begin{prop}\label{c2}Let $1\leq p<\infty$ and let $\nu\in M_{ac}(G,X)$ be a semivariation translation invariant vector measure with $\nu(G)\neq 0$. If $f\in L^p(\nu)$ and $g\in L^1_w(\nu)$ are such that the mapping $s\mapsto f(ts^{-1})g(s)$ belongs to $L^1(\nu)$ for $m_G$-almost everywhere $t\in G,$ then $f*^\nu g$ is Dunford $p$-integrable with $\|f*^\nu g\|_{L^p_w(G,X)}\leq \|f\|_{\nu,p}\|g\|_\nu\|\nu(G)\|^{-1/p}.$ In particular, if $g\in L^1(\nu),$ then $f*^\nu g$ is Pettis $p$-integrable.
\end{prop}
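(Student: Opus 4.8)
The plan is to reduce the entire statement to the scalar-valued convolution $f *_\nu g$ and then invoke results already established in this section. The key preliminary observation is that the standing hypotheses on $\nu$ allow me to place $f$ in an ordinary Lebesgue space: since $M_{ac}(G,X)\subset\mathcal{M}(G,X)$, Theorem \ref{A3} applies and gives $L^p(\nu)\subset L^p(G)$, so $f\in L^p(G)$; because $G$ is compact this further yields $f\in L^1(G)$. With $f\in L^p(G)$ and the integrability hypothesis on $s\mapsto f(ts^{-1})g(s)$ in force, Remark \ref{E1} applies and furnishes the pointwise identity $\langle f*^\nu g,x'\rangle = f*_\nu g(x')$ for every $x'\in X'$, which is the bridge between the vector-valued and scalar-valued convolutions.

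For the first assertion I would simply feed this identity into Remark \ref{R2}. That remark already guarantees $f*_\nu g\in\mathcal{B}(X',L^p(G))$ with $\|f*_\nu g\|_{\mathcal{B}(X',L^p(G))}\le\|f\|_{\nu,p}\|g\|_\nu\|\nu(G)\|^{-1/p}$. Hence each $\langle f*^\nu g,x'\rangle = f*_\nu g(x')\in L^p(G)$, which is precisely the statement that $f*^\nu g$ is Dunford $p$-integrable. Taking the supremum over $x'\in B_{X'}$ of $\|\langle f*^\nu g,x'\rangle\|_p$ identifies $\|f*^\nu g\|_{L^p_w(G,X)}$ with $\|f*_\nu g\|_{\mathcal{B}(X',L^p(G))}$, which delivers the claimed norm bound verbatim.

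For the ``in particular'' statement I would assume $g\in L^1(\nu)$ and appeal to the second part of Theorem \ref{T2}. Since $f\in L^1(G)$ as noted above, Theorem \ref{T2} produces, for each $A\in\mathfrak{B}(G)$, a representing vector $x_A=\int_G\bigl(\int_A f(ts^{-1})\,dm_G(t)\bigr)g(s)\,d\nu(s)\in X$ satisfying $\int_A\langle f*^\nu g,x'\rangle\,dm_G=\langle x_A,x'\rangle$ for all $x'$. The construction of these $x_A$ does not depend on the exponent $p$; combining their existence with the Dunford $p$-integrability already proved is exactly the definition of Pettis $p$-integrability.

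The proof is essentially bookkeeping, since nearly all of the analytic content has been distilled into Remark \ref{R2} and Theorem \ref{T2}. The only point requiring genuine care is the very first step: one must invoke Theorem \ref{A3} to upgrade $f\in L^p(\nu)$ to $f\in L^p(G)\subset L^1(G)$, because this is precisely what licenses the use of Remark \ref{E1} and Theorem \ref{T2}, both of which are stated for functions lying in the ordinary Lebesgue spaces over $G$ rather than in the spaces associated to $\nu$.
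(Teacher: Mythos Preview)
Your proof is correct and follows essentially the same strategy as the paper: identify $\langle f*^\nu g,x'\rangle$ with $f*_\nu g(x')$ via Remark~\ref{E1}, bound the latter, and then reuse the representing vectors $x_A$ from Theorem~\ref{T2} for the Pettis part. The only difference is in the packaging of the norm estimate: you invoke Remark~\ref{R2} to land directly in $L^p(G)$ with the factor $\|\nu(G)\|^{-1/p}$, whereas the paper first bounds $\|f*_\nu g(x')\|_{\nu,p}$ by $\|f\|_{\nu,p}\|g\|_\nu$ using Theorem~\ref{T1} and then applies Theorem~\ref{A3} to the convolution to pass from $L^p(\nu)$ to $L^p(G)$. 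Both routes are equally short and use the same ingredients (Theorem~\ref{A3} and the scalar convolution estimates); your version is arguably slightly cleaner since Remark~\ref{R2} already records exactly the bound needed. Your observation that Theorem~\ref{A3} is needed up front to place $f$ in $L^p(G)\subset L^1(G)$, so that Remark~\ref{E1} and Theorem~\ref{T2} are applicable, is a point the paper leaves implicit.
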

\begin{proof}
	Let $x^\prime\in B_{X^\prime}.$ By Remark \ref{E1} and Theorem \ref{T1}, $\|\langle f*^\nu g,x^\prime\rangle\|_{\nu,p}=\|f*_\nu g(x^\prime)\|_{\nu,p}\leq\|f\|_{\nu,p}\|g\|_\nu.$ Then, by Theorem \ref{A3}, $f*^\nu g$ is Dunford $p$-integrable with $\|f*^\nu g\|_{L^p_w(G,X)}\leq \|f\|_{\nu,p}\|g\|_\nu\|\nu(G)\|^{-1/p}.$ 
	
	For $g\in L^1(\nu),$ as in the proof of Theorem \ref{T2}, we have, for each $A\in \mathfrak{B}(G)$ there exists $x_A=\int_G\int_A f(ts^{-1})\,dm_G(t)g(s)\,d\nu(s)$ such that $\int_A\langle f*^\nu g,x^\prime \rangle\,dm_G=\langle x_A,x^\prime \rangle.$ Thus $f*^\nu g$ is Pettis $p$-integrable.
\end{proof}
Here is an analogue of the Young's inequality for the vector-valued convolution.
\begin{prop}\label{c3}Let $1\leq p<\infty$ and let $\nu\in M_{ac}(G,X)$ be a semivariation translation and inversion invariant vector measure with $\nu(G)\neq 0$. If $f\in L^p(\nu)$ and $g\in L^q_w(\nu),\ 1\leq q\leq p^\prime$ satisfying that  $s\mapsto f(ts^{-1})g(s)\in L^1(\nu)$ for $m_G$-almost everywhere $t\in G,$ then $f*^\nu g$ is Dunford $r$-integrable with $\|f*^\nu g\|_{L^r_w(G,X)}\leq\|f\|_{\nu,p}\|g\|_{\nu,q}\|\nu(G)\|^{-1/r}$ where $\frac{1}{p}+\frac{1}{q}=1+\frac{1}{r}.$ In particular, if $g\in L^q(\nu),$ then $f*^\nu g$ is Pettis $r$-integrable.
\end{prop}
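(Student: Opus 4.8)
The plan is to mirror the proof of Proposition \ref{c2}, with the Young-type estimate of Corollary \ref{c1} playing the role that Theorem \ref{T1} played there. Fix $x^\prime\in B_{X^\prime}$. Since $\nu$ is semivariation translation invariant with $\nu(G)\neq0$, Theorem \ref{A3} gives $f\in L^p(\nu)\subset L^p(G)$, and as $G$ is compact we also have $g\in L^q_w(\nu)\subset L^1_w(\nu)$. Hence the hypotheses of Remark \ref{E1} are met, and it yields the pointwise identity $\langle f*^\nu g(t),x^\prime\rangle=f*_\nu g(x^\prime)(t)$ for $m_G$-a.e. $t\in G$. In particular $t\mapsto\langle f*^\nu g(t),x^\prime\rangle$ coincides a.e. with the classical convolution $f*(gh_{x^\prime})$ and so is measurable.

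Next I would invoke Corollary \ref{c1}: because $1\leq q\leq p^\prime$ with $\tfrac1p+\tfrac1q=1+\tfrac1r$, we obtain $f*_\nu g\in\mathcal{B}(X^\prime,L^r_w(\nu))$ and $\|f*_\nu g(x^\prime)\|_{\nu,r}\leq\|f\|_{\nu,p}\|g\|_{\nu,q}$ for $x^\prime\in B_{X^\prime}$. Applying the norm comparison of Theorem \ref{A3} to $f*_\nu g(x^\prime)$ then gives
\begin{equation*}
\|\langle f*^\nu g,x^\prime\rangle\|_r=\|f*_\nu g(x^\prime)\|_r\leq\|f*_\nu g(x^\prime)\|_{\nu,r}\,\|\nu(G)\|^{-1/r}\leq\|f\|_{\nu,p}\|g\|_{\nu,q}\|\nu(G)\|^{-1/r}.
\end{equation*}
Thus $\langle f*^\nu g,x^\prime\rangle\in L^r(G)$ for every $x^\prime\in X^\prime$, so $f*^\nu g$ is Dunford $r$-integrable, and taking the supremum over $x^\prime\in B_{X^\prime}$ yields the asserted bound on $\|f*^\nu g\|_{L^r_w(G,X)}$.

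For the final claim, suppose $g\in L^q(\nu)$. The ``in particular'' part of Corollary \ref{c1} then improves the conclusion to $f*_\nu g(x^\prime)\in L^r(\nu)$, and Pettis $r$-integrability follows exactly as in the second half of the proof of Theorem \ref{T2}: for each $A\in\mathfrak{B}(G)$ the map $s\mapsto\int_A f(ts^{-1})\,dm_G(t)$ is a bounded measurable function, so $x_A=\int_G\int_A f(ts^{-1})\,dm_G(t)\,g(s)\,d\nu(s)$ defines an element of $X$, and Fubini's theorem gives $\int_A\langle f*^\nu g,x^\prime\rangle\,dm_G=\langle x_A,x^\prime\rangle$ for all $x^\prime\in X^\prime$.

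The only delicate point is the passage from the $\|\cdot\|_{\nu,r}$ bound provided by Corollary \ref{c1} to the $\|\cdot\|_r$ bound via Theorem \ref{A3}: since Corollary \ref{c1} lands in $L^r_w(\nu)$ rather than $L^r(\nu)$, one must check that the inequality $\|\phi\|_r\leq\|\phi\|_{\nu,r}\|\nu(G)\|^{-1/r}$ of Theorem \ref{A3} persists on all of $L^r_w(\nu)$. This is the very same step used tacitly in Proposition \ref{c2}, where it is applied to $f*_\nu g(x^\prime)\in L^p_w(\nu)$, so it transfers without change; everything else is routine.
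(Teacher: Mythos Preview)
Your proposal is correct and follows essentially the same approach as the paper: the paper's proof simply says to mirror Proposition \ref{c2} with Corollary \ref{c1} in place of Theorem \ref{T1}, which is exactly what you do. Your write-up is in fact more explicit than the paper's, and your closing remark about the tacit use of Theorem \ref{A3} on $L^r_w(\nu)$ correctly flags a point the paper glosses over both here and in Proposition \ref{c2}.
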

\begin{proof}
The proof of this follows exactly as in the previous Proposition, except that one will have to use Corollary \ref{c1} instead of Theorem \ref{T1}.
\end{proof}

\section{Fourier transform of vector measures}
In this section, we define the Fourier transform of a vector measure. We show that the Fourier transform, considered as an operator, is completely bounded. We find a sufficient condition on the space $X$ so that the Fourier transform of a vector measure satisfies the Riemann-Lebesgue Lemma.
\begin{defn}
The Fourier transform of a vector measure $\nu$ at $[\pi]\in\widehat{G}$ is defined by $$\widehat{\nu}(\pi)=\frac{1}{d_\pi}\int_G \pi(t)^*d\nu(t)\in\mathcal{B}(\mathcal{H}_\pi)\otimes_{min}X.$$
\end{defn}
\begin{rem}
As mentioned in Remark \ref{RforFTM}, the Fourier transform of a vector measure $\nu$ at $[\pi]\in\widehat{G}$ can be identified as a $d_\pi\times d_\pi$ matrix and for $1\leq i,j\leq d_\pi,$ the $(i,j)^{\mbox{th}}$-entry of $\widehat{\nu}(\pi)$ is given by $\frac{1}{d_\pi}\int_G\overline{\pi(t)_{ji}}\ d\nu(t).$
\end{rem}

The following proposition shows that the Fourier transform of $f\in L^1(\nu)$ and the corresponding measure $\nu_f$ coincide.
\begin{prop}\label{FTVMC}
If $f\in L^1(\nu),$ then $\widehat{f}^\nu=\widehat{\nu_f}.$ If $f\in L^1_w(\nu)$ and $\nu\in M_{ac}(G,X),$ then for $x^\prime\in X^\prime$ and $[\pi]\in\widehat{G},$ $\widehat{f}_\nu(x^\prime)(\pi)=\langle\langle\widehat{\nu_f}(\pi),x^\prime\rangle\rangle.$
\end{prop}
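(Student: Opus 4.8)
The plan is to prove the two assertions separately, each by reducing to the entrywise descriptions of the Fourier transforms that are already established in Remark \ref{RforFTM} and Definition \ref{wFT}. For the first claim, recall that if $f\in L^1(\nu)$ then $\nu_f$ is the $X$-valued measure given by $\nu_f(A)=\int_A f\,d\nu$ with $\|\nu_f\|=\|f\|_\nu$, as recorded just after Lemma \ref{dense}. I would compute the $(i,j)^{\mbox{th}}$-entry of $\widehat{\nu_f}(\pi)$ and compare it with the corresponding entry of $\widehat{f}^\nu(\pi)$ from Remark \ref{RforFTM}.

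First I would fix $[\pi]\in\widehat{G}$ and $1\leq i,j\leq d_\pi$ and write, using the definition of the Fourier transform of a measure together with its entrywise expression,
\begin{equation*}
\widehat{\nu_f}(\pi)_{ij}=\frac{1}{d_\pi}\int_G\overline{\pi(t)_{ji}}\,d\nu_f(t).
\end{equation*}
The main point is then to justify the identity $\int_G\overline{\pi(\cdot)_{ji}}\,d\nu_f=\int_G f(t)\overline{\pi(t)_{ji}}\,d\nu(t)$, i.e.\ that integrating the bounded scalar function $\overline{\pi(\cdot)_{ji}}$ against the measure $\nu_f$ returns the integral of $f\,\overline{\pi(\cdot)_{ji}}$ against $\nu$. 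This is exactly the standard change-of-measure formula $\int_G \varphi\,d\nu_f=\int_G \varphi f\,d\nu$ for $\varphi\in L^\infty(G)$, which holds first for simple $\varphi$ by the definition $\nu_f(A)=\int_A f\,d\nu$ and then for bounded $\varphi$ by density of simple functions and the fact that $\overline{\pi(\cdot)_{ji}}$ is continuous hence bounded; one verifies it by pairing with an arbitrary $x^\prime\in X^\prime$ and invoking the scalar identity $\int_G \varphi\,d\langle\nu_f,x^\prime\rangle=\int_G \varphi f\,d\langle\nu,x^\prime\rangle$. Comparing with the entrywise formula for $\widehat{f}^\nu(\pi)_{ij}$ in Remark \ref{RforFTM} gives $\widehat{\nu_f}(\pi)_{ij}=\widehat{f}^\nu(\pi)_{ij}$ for all $i,j$, and since this holds for every $[\pi]$ we conclude $\widehat{f}^\nu=\widehat{\nu_f}$.

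For the second assertion, assume $\nu\in M_{ac}(G,X)$ and $f\in L^1_w(\nu)$, and fix $x^\prime\in X^\prime$. Here $\nu_f$ is only an $X^{\prime\prime}$-valued measure, with $\langle \nu_f(A),x^\prime\rangle=\int_A f\,d\langle\nu,x^\prime\rangle$, as noted in the preliminaries. The plan is to unwind the matrix pairing $\langle\langle\widehat{\nu_f}(\pi),x^\prime\rangle\rangle$ entrywise: its $(i,j)^{\mbox{th}}$-entry is $\langle\widehat{\nu_f}(\pi)_{ij},x^\prime\rangle=\frac{1}{d_\pi}\int_G\overline{\pi(t)_{ji}}\,d\langle\nu_f,x^\prime\rangle(t)$. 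Using $d\langle\nu_f,x^\prime\rangle=f\,d\langle\nu,x^\prime\rangle$ and then the Radon–Nikodym identity $d\langle\nu,x^\prime\rangle=h_{x^\prime}\,dm_G$ from the discussion preceding Definition \ref{wFT}, this equals $\frac{1}{d_\pi}\int_G f(t)h_{x^\prime}(t)\overline{\pi(t)_{ji}}\,dm_G(t)=\widehat{fh_{x^\prime}}(\pi)_{ij}$, which by Definition \ref{wFT} is precisely $\widehat{f}_\nu(x^\prime)(\pi)_{ij}$. Assembling the entries yields $\widehat{f}_\nu(x^\prime)(\pi)=\langle\langle\widehat{\nu_f}(\pi),x^\prime\rangle\rangle$.

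The only genuine obstacle is the change-of-measure step in the first part, namely verifying $\int_G\varphi\,d\nu_f=\int_G\varphi f\,d\nu$ for the bounded continuous functions $\varphi=\overline{\pi(\cdot)_{ji}}$; everything else is a bookkeeping exercise in matching matrix entries, applying the Radon–Nikodym theorem, and using that the pairing $\langle\langle\cdot,x^\prime\rangle\rangle$ acts entrywise. This change-of-measure identity is most cleanly handled by testing against functionals $x^\prime\in X^\prime$ and reducing to the scalar statement, which follows from the definition of $\nu_f$ on simple functions together with a density/dominated-convergence argument, the domination being provided by $\|\varphi\|_\infty\,|\langle\nu,x^\prime\rangle|$ and the $\nu$-integrability of $f$.
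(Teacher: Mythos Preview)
Your proposal is correct and follows essentially the same route as the paper: an entrywise comparison of the matrices, using the Radon--Nikodym identity $d\langle\nu,x^\prime\rangle=h_{x^\prime}\,dm_G$ for the second assertion. The paper is considerably terser---for the first part it simply says the equality ``follows from the definition of $\nu_f$''---whereas you spell out the change-of-measure identity $\int_G\varphi\,d\nu_f=\int_G\varphi f\,d\nu$ and its justification; this extra care is appropriate but does not constitute a different approach.
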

\begin{proof}
If $f\in L^1(\nu)$ then $\widehat{\nu_f}=\widehat{f}^\nu$ follows from the definition of $\nu_f.$ If $f\in L^1_w(\nu)$ and $\nu\in M_{ac}(G,X),$ then for $x^\prime\in X^\prime$ and $[\pi]\in\widehat{G},$ $$\widehat{fh_{x^\prime}}(\pi)_{ij}=\int_Gf(t)h_{x^\prime}(t)\overline{\pi(t)_{ji}}\,dm_G(t)=\int_Gf(t)\overline{\pi(t)_{ji}}\,d\langle\nu,x^\prime\rangle(t)=\langle\widehat{\nu_f}(\pi)_{ij},x^\prime\rangle,$$
where $h_{x^\prime}=\frac{d\langle\nu,x^\prime\rangle}{dm_G}.$ Hence we have 
\begin{align*}
\widehat{f}_\nu(x^\prime)(\pi)&=\widehat{fh_{x^\prime}}(\pi)=[\widehat{fh_{x^\prime}}(\pi)_{ij}]=[\langle\widehat{\nu_f}(\pi)_{ij},x^\prime\rangle]=\langle\langle\widehat{\nu_f}(\pi),x^\prime\rangle\rangle. \qedhere
\end{align*}
\end{proof}
The following theorem shows the complete boundedness of the Fourier transform operator.
\begin{thm}\mbox{ }
\begin{enumerate}
\item [(i)] If $\nu\in M(G,X)$ then $\widehat{\nu}\in\ell^\infty\mbox{-}\underset{[\pi]\in\widehat{G}}{\oplus}M_{d_\pi}(X).$ In fact, $\underset{[\pi]\in\widehat{G}}{\sup}\|\widehat{\nu}(\pi)\|_{d_\pi}\leq\|\nu\|.$
\item [(ii)] The Fourier transform operator $\mathcal{F}$ from $M(G,X)$ to $\ell^\infty\mbox{-}\underset{[\pi]\in\widehat{G}}{\oplus}M_{d_\pi}(X)$ given by $\mathcal{F}(\nu)=\widehat{\nu}$, is completely bounded.
\end{enumerate}
\end{thm}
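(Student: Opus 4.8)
The plan is to mirror the strategy already used for the Fourier transform of functions in $L^1(\nu)$, namely Theorem \ref{CBFTO}, since the measure case is structurally identical with the integrand $f(t)$ simply removed. For part (i), I would begin by estimating the matrix entries of $\widehat{\nu}(\pi)$. Using the formula from the preceding Remark, for each $x^\prime\in B_{X^\prime}$ and each pair of indices $1\leq i,j\leq d_\pi$ we have
\begin{align*}
\|\widehat{\nu}(\pi)_{ij}\|
&=\sup_{x^\prime\in B_{X^\prime}}\left|\left\langle\frac{1}{d_\pi}\int_G\overline{\pi(t)_{ji}}\,d\nu(t),x^\prime\right\rangle\right|
=\sup_{x^\prime\in B_{X^\prime}}\frac{1}{d_\pi}\left|\int_G\overline{\pi(t)_{ji}}\,d\langle\nu,x^\prime\rangle(t)\right|.
\end{align*}
Since $|\pi(t)_{ij}|\leq 1$ for all $t\in G$ (the matrix coefficients of a unitary representation are bounded by $1$), the integral is dominated by $|\langle\nu,x^\prime\rangle|(G)$, so that $\|\widehat{\nu}(\pi)_{ij}\|\leq\frac{1}{d_\pi}\|\nu\|$. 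Feeding this into the operator space inequality $\|[x_{ij}]\|_n\leq n\max_{i,j}\|x_{ij}\|$ recorded in the preliminaries gives $\|\widehat{\nu}(\pi)\|_{d_\pi}\leq d_\pi\cdot\frac{1}{d_\pi}\|\nu\|=\|\nu\|$, and taking the supremum over $[\pi]\in\widehat{G}$ yields both the membership in $\ell^\infty\mbox{-}\oplus M_{d_\pi}(X)$ and the stated norm bound.

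For part (ii), the natural route is to reduce complete boundedness to ordinary boundedness at the matrix level, exactly as in Theorem \ref{CBInt}(iii) and its reuse in Theorem \ref{CBFTO}(ii). Concretely, I would examine the $n^{\mbox{th}}$ amplification $\mathcal{F}_n$ acting on $M_n(M(G,X))$. By Theorem \ref{measure}, an element $[\nu_{ij}]\in M_n(M(G,X))$ is identified with a single measure $\tilde{\nu}\in M(G,M_n(X))$ via $\tilde{\nu}(A)=[\nu_{ij}(A)]$, and one checks that the Fourier transform commutes with this identification in the sense that $\mathcal{F}_n([\nu_{ij}])$ is identified with $\widehat{\tilde{\nu}}$, the Fourier transform of the single $M_n(X)$-valued measure. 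Then part (i), applied to $M_n(X)$ in place of $X$, gives $\|\widehat{\tilde{\nu}}(\pi)\|\leq\|\tilde{\nu}\|$ uniformly in $\pi$, which after unwinding the identifications shows $\|\mathcal{F}_n\|\leq 1$ for every $n$. Hence $\mathcal{F}$ is completely bounded with $\|\mathcal{F}\|_{cb}\leq 1$.

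The main obstacle, and the step deserving genuine care rather than a routine wave of the hand, is verifying that the Fourier transform is compatible with the operator space identification of Theorem \ref{measure}, i.e.\ that applying $\mathcal{F}$ entrywise to a matrix of measures coincides with taking the Fourier transform of the amalgamated $M_n(X)$-valued measure. This is where the operator space structure does the real work, and it hinges on the fact that the integral $\int_G\overline{\pi(t)_{ji}}\,d\tilde{\nu}(t)$ computed in $M_{d_\pi}\otimes_{min}M_n(X)$ has as its $M_n(X)$-valued entries precisely the matrix $\big[\int_G\overline{\pi(t)_{ji}}\,d\nu_{kl}(t)\big]$; this follows from Proposition \ref{evaluation} applied with the coordinate functionals $P^\pi_{ij}\otimes Id$ together with the linearity of the vector integral, so the verification is bookkeeping of indices but should be stated explicitly. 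Once this commutation is in place, the complete boundedness is immediate from part (i), and no further interpolation or approximation argument is required.
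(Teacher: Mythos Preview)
Your proposal is correct and follows exactly the approach the paper indicates: the paper's proof consists solely of the remark that one proceeds as in Theorem~\ref{CBFTO}, invoking Theorem~\ref{measure} in place of Theorem~\ref{CBInt}. You have in fact written out the details the paper omits, including the entrywise estimate for (i) and the identification of $\mathcal{F}_n$ with the Fourier transform of the amalgamated $M_n(X)$-valued measure for (ii), so your argument is strictly more explicit than the paper's own treatment.
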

\begin{proof}
The proof of this is exactly same as in the proof of Theorem \ref{CBFTO}, once we use Theorem \ref{measure} and hence we omit it.
\end{proof}
\begin{prop}[Uniqueness theorem]\label{p3}
Let $\nu\in M_{ac}(G,X)$. If $\widehat{\nu}=0$ then $\nu=0.$
\end{prop}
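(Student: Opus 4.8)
The plan is to reduce the uniqueness statement for the vector measure $\nu$ to the already-established uniqueness theorem for the Fourier transform of weakly integrable functions, namely Theorem \ref{T5}. The key observation is that the hypothesis $\nu\in M_{ac}(G,X)$ is exactly what allows us to fabricate a function in $L^1_w(\nu)$ whose associated measure is $\nu$ itself: since $\nu$ is absolutely continuous with respect to $m_G$, the constant function $\mathbf{1}$ (or equivalently $\chi_G$) should be weakly integrable with respect to $\nu$, because for each $x^\prime\in X^\prime$ we have $d\langle\nu,x^\prime\rangle=h_{x^\prime}\,dm_G$ with $h_{x^\prime}\in L^1(G)$, so that $\int_G \mathbf{1}\,d|\langle\nu,x^\prime\rangle| = \|h_{x^\prime}\|_1 < \infty$.

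First I would verify that $f\equiv\mathbf{1}\in L^1_w(\nu)$ using the Radon-Nikodym densities $h_{x^\prime}$ as above. Next I would relate the Fourier transform $\widehat{\nu}$ of the measure to the weak Fourier transform $\widehat{f}_\nu$ of this constant function. By Definition \ref{wFT}, $\widehat{f}_\nu(x^\prime)(\pi)=\widehat{fh_{x^\prime}}(\pi)=\widehat{h_{x^\prime}}(\pi)$, while on the measure side the $(i,j)$-entry of $\widehat{\nu}(\pi)$ is $\frac{1}{d_\pi}\int_G\overline{\pi(t)_{ji}}\,d\nu(t)$, whose pairing against $x^\prime$ is $\frac{1}{d_\pi}\int_G\overline{\pi(t)_{ji}}\,h_{x^\prime}(t)\,dm_G(t)=\widehat{h_{x^\prime}}(\pi)_{ij}$. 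This is precisely the content of Proposition \ref{FTVMC} applied to $f=\mathbf{1}$, since $\nu_{\mathbf{1}}=\nu$; hence $\widehat{f}_\nu(x^\prime)(\pi)=\langle\langle\widehat{\nu}(\pi),x^\prime\rangle\rangle$ for every $x^\prime$ and $[\pi]$.

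Now the hypothesis $\widehat{\nu}=0$ forces $\langle\langle\widehat{\nu}(\pi),x^\prime\rangle\rangle=0$ for all $x^\prime\in X^\prime$ and all $[\pi]\in\widehat{G}$, whence $\widehat{f}_\nu=0$. Applying Theorem \ref{T5} to $f=\mathbf{1}$ yields $f=\mathbf{1}=0$ $\nu$-a.e.. This says that $G$ itself is $\nu$-null, i.e. $\int_G|\mathbf{1}|\,d|\langle\nu,x^\prime\rangle|=0$ for every $x^\prime$, which gives $|\langle\nu,x^\prime\rangle|(G)=0$ and therefore $\langle\nu(A),x^\prime\rangle=0$ for all $A\in\mathfrak{B}(G)$ and all $x^\prime\in X^\prime$. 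By Hahn-Banach this means $\nu(A)=0$ for every $A$, that is $\nu=0$, completing the proof.

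The main obstacle, and the only step requiring genuine care, is the passage from $\widehat{\nu}=0$ to $\widehat{f}_\nu=0$: one must confirm that the matrix pairing $\langle\langle\cdot,x^\prime\rangle\rangle$ is injective in $x^\prime$ in the appropriate sense, or rather simply note that $\widehat{\nu}(\pi)=0$ as an element of $M_{d_\pi}(X)$ trivially makes every entry vanish in $X$, hence pairs to zero against every $x^\prime$. Apart from checking $\mathbf{1}\in L^1_w(\nu)$, everything else is a direct invocation of Proposition \ref{FTVMC} and Theorem \ref{T5}, so the argument is essentially a clean reduction rather than a computation.
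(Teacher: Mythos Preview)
Your argument is correct. Both your proof and the paper's hinge on the same Radon--Nikodym reduction: pairing $\widehat{\nu}(\pi)$ against $x^\prime$ yields $\widehat{h_{x^\prime}}(\pi)$, and classical uniqueness on $L^1(G)$ then kills $h_{x^\prime}$. The difference is purely organizational. The paper argues directly: from $\widehat{\nu}=0$ it obtains $\widehat{h_{x^\prime}}=0$, hence $h_{x^\prime}=0$ $m_G$-a.e., hence $|\langle\nu,x^\prime\rangle|(A)=\int_A|h_{x^\prime}|\,dm_G=0$ for every $A$ and every $x^\prime$, so $\nu=0$. You instead package this as a reduction: set $f=\mathbf{1}$, invoke Proposition~\ref{FTVMC} to identify $\widehat{f}_\nu(x^\prime)(\pi)$ with $\langle\langle\widehat{\nu}(\pi),x^\prime\rangle\rangle$, and then quote Theorem~\ref{T5} as a black box. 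Your route is slightly more economical in that it reuses earlier work rather than reproving its special case; the paper's route is self-contained. One minor remark: you do not actually need to worry about whether $\mathbf{1}\in L^1_w(\nu)$, since $\mathbf{1}=\chi_G$ is a simple function and hence lies in $L^1(\nu)\subset L^1_w(\nu)$ automatically.
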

\begin{proof}
Let $x^\prime\in X^\prime$ and $[\pi]\in\widehat{G}.$ First, we claim that if $\nu\in M_{ac}(G,X)$ then $\langle\widehat{\nu}(\pi),x^\prime\rangle=\widehat{h_{x^\prime}}(\pi),$ where $h_{x^\prime}$ is the Radon-Nikodym derivative $\frac{d\langle\nu,x^\prime\rangle}{dm_G}.$ For $1\leq i,j\leq d_\pi,$ we have, $$\langle\widehat{\nu}(\pi)_{ij},x^\prime\rangle=\frac{1}{d_\pi}\int_G\overline{\pi(t)_{ji}}\,d\langle\nu,x^\prime\rangle(t)=\frac{1}{d_\pi}\int_G\overline{\pi(t)_{ji}}h_{x^\prime}(t)\,dm_G(t)=\widehat{h_{x^\prime}}(\pi)_{ij}$$ and hence the claim.

Now, by our assumption that $\widehat{\nu}=0,$ it follows that $\widehat{h_{x^\prime}}=0$ for every $x^\prime\in X^\prime.$ Since $h_{x^\prime}\in L^1(G),$ by the classical Uniqueness theorem for Fourier transform, we have $h_{x^\prime}=0$ $m_G$-a.e. for every $x^\prime\in X^\prime.$ Let $A\in\mathfrak{B}(G)$ and $x^\prime\in X^\prime.$ Then, $$|\langle\nu,x^\prime\rangle|(A)=\int_A\,d|\langle\nu,x^\prime\rangle|=\int_A|h_{x^\prime}|\,dm_G=0.$$ 
This implies that each Borel subset of $G$ is $\nu$-null. Hence the proof.
\end{proof}

A natural question that arises here is whether the vector measure satisfies the Riemann-Lebesgue Lemma. In general a vector measure $\nu$ does not satisfy it, i.e., $\widehat{\nu}\notin c_0\mbox{-}\underset{[\pi]\in\widehat{G}}{\oplus}M_{d_\pi}(X).$ It is not true even for a compact abelian group. Here we provide an example using the Example \ref{EFRLL}.
\begin{exam}
Let the vector measure $\nu$ and $f$ be as given in the Example \ref{EFRLL}. By Proposition \ref{FTVMC},
$\widehat{\nu_f}=\widehat{f}^\nu.$ Thus by Example \ref{EFRLL}, we
have $\widehat{\nu_f}\notin
c_0\mbox{-}\underset{[\pi]\in\widehat{G}}{\oplus}M_{d_\pi}(L^1(G)).$
\end{exam}
\begin{rem}\label{FTVOp}
For any vector measure $\nu$ we have $\widehat{\nu}(\pi)=[T_\nu(\overline{\pi(\cdot)_{ji}})]_{d_\pi\times d_\pi}.$
\end{rem}
If $\nu\in M_{ac}(G,X)$ then also the Riemann-Lebesgue Lemma may not hold in general. An example is provided below.
\begin{exam}
Let $G$ be an infinite compact group, $X=L^1(G)$ and $\nu(A)=\chi_A,\ A\in\mathfrak{B}(G)$. Then $\nu\in M_{ac}(G,L^1(G))$ and moreover, it is clear that $T_\nu$ is just the  inclusion map from $C(G)$ to $L^1(G).$ Further, it follows from Remark \ref{FTVOp}, that, for $\ [\pi]\in\widehat{G},$ $\widehat{\nu}(\pi)=\pi(\cdot)^\ast.$ Thus $\|\widehat{\nu}(\pi)\|_{M_{d_\pi}(L^1(G))}=1.$ Hence $\nu$ does not satisfy the Riemann-Lebesgue Lemma.
\end{exam}
Our next result gives a sufficient condition for a vector measure to satisfy the Riemann-Lebesgue Lemma.
\begin{prop}\label{FTVMRNP}
Let $\nu\in M_{ac}(G,X)$ be a measure of bounded variation. If X has the Radon-Nikodym Property with respect to $(G,\mathfrak{B}(G),m_G),$ then $\widehat{\nu}\in c_0\mbox{-}\underset{[\pi]\in\widehat{G}}{\oplus}M_{d_\pi}(X).$
\end{prop}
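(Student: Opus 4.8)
The plan is to use the Radon-Nikodym Property to replace $\nu$ by a Bochner-integrable density and then reduce the claim to the Riemann-Lebesgue Lemma for $X$-valued $L^1$-functions, which itself reduces to the scalar case by approximation with simple functions.

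First I would record that, since $\nu\in M_{ac}(G,X)$ has bounded variation and $X$ has the Radon-Nikodym Property with respect to $(G,\mathfrak{B}(G),m_G),$ there exists $f\in L^1(G,X)$ with $d\nu=f\,dm_G.$ Because each matrix coefficient $\overline{\pi(\cdot)_{ji}}$ is continuous, hence bounded, the defining scalar integrals may be evaluated against the density, giving $$\widehat{\nu}(\pi)_{ij}=\frac{1}{d_\pi}\int_G\overline{\pi(t)_{ji}}\,d\nu(t)=\frac{1}{d_\pi}\int_G\overline{\pi(t)_{ji}}f(t)\,dm_G(t).$$ Thus $\widehat{\nu}(\pi)=\widehat{f}(\pi),$ where $\widehat{f}(\pi):=\frac{1}{d_\pi}\int_Gf(t)\pi(t)^*\,dm_G(t)\in M_{d_\pi}(X)$ is the vector-valued Fourier transform of the Bochner-integrable function $f.$ It therefore suffices to prove that $\widehat{f}\in c_0\mbox{-}\underset{[\pi]\in\widehat{G}}{\oplus}M_{d_\pi}(X)$ for every $f\in L^1(G,X).$

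Next I would run an approximation argument. For $g\in L^1(G,X),$ let $\lambda_g$ be the measure $\lambda_g(A)=\int_Ag\,dm_G$; then $\widehat{\lambda_g}=\widehat{g},$ and by the boundedness of the Fourier transform operator on $M(G,X)$ together with $\|\lambda_g\|\le|\lambda_g|(G)=\|g\|_{L^1(G,X)},$ one gets $\sup_{[\pi]\in\widehat{G}}\|\widehat{g}(\pi)\|_{d_\pi}\le\|g\|_{L^1(G,X)}.$ Hence $g\mapsto\widehat{g}$ is a contraction from $L^1(G,X)$ into $\ell^\infty\mbox{-}\underset{[\pi]\in\widehat{G}}{\oplus}M_{d_\pi}(X).$ Since $c_0\mbox{-}\underset{[\pi]\in\widehat{G}}{\oplus}M_{d_\pi}(X)$ is closed in $\ell^\infty\mbox{-}\underset{[\pi]\in\widehat{G}}{\oplus}M_{d_\pi}(X)$ and simple functions are dense in $L^1(G,X),$ it is enough to verify the claim for $X$-valued simple functions.

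Finally, for $\phi=\sum_{k=1}^n\chi_{A_k}x_k$ with $x_k\in X,$ linearity of the integral yields $\widehat{\phi}(\pi)=\sum_{k=1}^n\widehat{\chi_{A_k}}(\pi)\otimes x_k,$ with $\widehat{\chi_{A_k}}(\pi)\in M_{d_\pi}$ the scalar transform of $\chi_{A_k}\in L^1(G);$ the operator space identity $\|\alpha\otimes x\|_{d_\pi}=\|\alpha\|\|x\|$ then gives $$\|\widehat{\phi}(\pi)\|_{d_\pi}\le\sum_{k=1}^n\|\widehat{\chi_{A_k}}(\pi)\|_{M_{d_\pi}}\|x_k\|_X.$$ The classical Riemann-Lebesgue Lemma on the compact group $G$ forces each $\|\widehat{\chi_{A_k}}(\pi)\|_{M_{d_\pi}}\to 0$ as $[\pi]\to\infty,$ so this finite sum tends to $0$ and $\widehat{\phi}\in c_0\mbox{-}\underset{[\pi]\in\widehat{G}}{\oplus}M_{d_\pi}(X),$ completing the reduction. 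The main obstacle is less a hard estimate than correctly assembling the reduction: one must justify replacing the scalar integral against $\nu$ by the Bochner integral against $f$ (valid since the coefficients are bounded), and one must have the scalar Riemann-Lebesgue Lemma available for the possibly non-abelian $G$, which follows from the density of trigonometric polynomials in $L^1(G)$ together with the contraction $\|\widehat{h}(\pi)\|_{M_{d_\pi}}\le\|h\|_1.$
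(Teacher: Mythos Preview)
Your argument is correct and follows the same overall strategy as the paper: use the Radon--Nikodym Property to write $d\nu=f\,dm_G$ with $f\in L^1(G,X)$, identify $\widehat{\nu}=\widehat{f}$, and then invoke the vector-valued Riemann--Lebesgue Lemma for $L^1(G,X)$. The only difference is that the paper obtains this last step by citing \cite[Corollary 3.8]{GP}, whereas you supply a self-contained proof via approximation by simple functions, the tensor identity $\|\alpha\otimes x\|_{d_\pi}=\|\alpha\|\,\|x\|$, and the scalar Riemann--Lebesgue Lemma on $G$. Your route has the advantage of being independent of the external reference and of making transparent exactly which ingredients are needed; the paper's citation keeps the exposition shorter.
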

\begin{proof}
Let $\nu\in M_{ac}(G,X)$ be a measure of bounded variation. Then by the definition of the Radon-Nikodym Property of $X$  there exists $f\in L^1(G,X)$ such that $d\nu=f\,dm_G.$ It is clear that $\widehat{\nu}=\widehat{f},$ where $\widehat{f}$ denotes the Fourier transform of the $X$-valued function $f.$ See \cite{GP}. Hence, by \cite[Corollary 3.8]{GP}, $\widehat{\nu}\in c_0\mbox{-}\underset{[\pi]\in\widehat{G}}{\oplus}M_{d_\pi}(X)$.
\end{proof}

\section{Convolution of a vector measure and a scalar measure}
In this section, we define the convolution of a vector measure and a scalar measure and study its properties.

For $\mu\in M(G)$ and $A\in\mathfrak{B}(G),$ note that the mapping $t\mapsto\mu(At^{-1})$ is  bounded and the following definition is well-defined. 
\begin{defn}
The convolution of $\mu\in M(G)$ and $\nu\in M(G,X)$ is defined by $$\mu*\nu(A)=\int_G\mu(At^{-1})\,d\nu(t),\ A\in\mathfrak{B}(G).$$
\end{defn}
Note that $\mu*\nu\in M(G,X)$ with $\|\mu*\nu\|\leq\|\mu\|\|\nu\|.$
\begin{prop}\label{p2}
If $\mu\in M(G)$ and $\nu\in\mathcal{M}(G,X),$ then $\mu*\nu\in\mathcal{M}(G,X)$. Also $T_{\mu*\nu}=T_\nu\circ C_\mu$ where $C_\mu$ is a mapping on $C(G)$ given by $C_\mu(\phi)(s)=\int_G\phi(ts)\,d\mu(t).$
\end{prop}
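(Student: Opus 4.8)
The plan is to first establish the operator identity $T_{\mu*\nu}=T_\nu\circ C_\mu$ by scalarizing against functionals in $X^\prime$, and then to deduce the regularity of $\mu*\nu$ from the weak compactness of this composition together with the representation theorem for operators on $C(G)$.

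First I would record that convolution commutes with scalarization. Since $s\in At^{-1}$ if and only if $st\in A$, we have $\mu(At^{-1})=\int_G\chi_A(st)\,d\mu(s)$, and hence for every $x^\prime\in X^\prime$ and $A\in\mathfrak{B}(G)$,
$$\langle(\mu*\nu)(A),x^\prime\rangle=\int_G\mu(At^{-1})\,d\langle\nu,x^\prime\rangle(t)=\int_G\int_G\chi_A(st)\,d\mu(s)\,d\langle\nu,x^\prime\rangle(t)=(\mu*\langle\nu,x^\prime\rangle)(A).$$
Thus $\langle\mu*\nu,x^\prime\rangle$ is exactly the classical convolution of the scalar measures $\mu$ and $\langle\nu,x^\prime\rangle$. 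Applying the classical identity $\int_G\phi\,d(\mu*\lambda)=\int_G\int_G\phi(st)\,d\mu(s)\,d\lambda(t)$ (a Fubini interchange, legitimate because $(s,t)\mapsto\phi(st)$ is bounded and continuous) with $\lambda=\langle\nu,x^\prime\rangle$ gives, for $\phi\in C(G)$,
$$\langle T_{\mu*\nu}(\phi),x^\prime\rangle=\int_G\phi\,d\langle\mu*\nu,x^\prime\rangle=\int_G C_\mu(\phi)(t)\,d\langle\nu,x^\prime\rangle(t)=\langle T_\nu(C_\mu(\phi)),x^\prime\rangle.$$
As this holds for all $x^\prime\in X^\prime$, the Hahn--Banach theorem yields $T_{\mu*\nu}(\phi)=T_\nu(C_\mu(\phi))$, i.e. $T_{\mu*\nu}=T_\nu\circ C_\mu$.

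For the regularity statement I would first note that $C_\mu$ indeed maps $C(G)$ into $C(G)$: continuity of $C_\mu(\phi)$ follows from the uniform continuity of $\phi$ on the compact group $G$, and $\|C_\mu(\phi)\|_\infty\leq\|\mu\|\,\|\phi\|_\infty$, so $C_\mu$ is a bounded operator on $C(G)$. Since $\nu\in\mathcal{M}(G,X)$, the operator $T_\nu$ is weakly compact, and therefore the composition $T_\nu\circ C_\mu=T_{\mu*\nu}$ is weakly compact as well. By the representation theorem for weakly compact operators on $C(G)$ (see \cite{DU}), $T_{\mu*\nu}$ is induced by a \emph{regular} $X$-valued vector measure. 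Because $\mu*\nu$ already represents $T_{\mu*\nu}$ and its scalarizations $\mu*\langle\nu,x^\prime\rangle$ are regular scalar measures (being convolutions of the regular measures $\mu$ and $\langle\nu,x^\prime\rangle$), uniqueness of the representing measure forces $\mu*\nu$ to coincide with the regular one, so $\mu*\nu\in\mathcal{M}(G,X)$.

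The genuinely delicate point is the regularity assertion rather than the operator identity. The scalar convolution computation and the Fubini interchange are routine, but to conclude that $\mu*\nu$ itself is regular I must match it precisely with the regular representing measure produced by the weak compactness of $T_\nu\circ C_\mu$; this identification, via the representation theorem and the scalar-regularity uniqueness argument, is where the real work lies.
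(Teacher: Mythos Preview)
Your proposal is correct and follows precisely the route the paper intends: the paper omits the proof, pointing to \cite[Lemma 4.3]{B}, and your argument---establishing $T_{\mu*\nu}=T_\nu\circ C_\mu$ by scalarization and Fubini, then deducing regularity from weak compactness of the composition together with the identification of $\mu*\nu$ as the representing measure via the regularity of its scalarizations---is exactly that argument adapted to the non-abelian setting. The care you take in matching $\mu*\nu$ with the regular representing measure (using that each $\langle\mu*\nu,x'\rangle=\mu*\langle\nu,x'\rangle$ is already regular) is the one genuinely needed step, and you handle it correctly.
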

As the proof of this is similar to \cite[Lemma 4.3]{B}, we shall omit it. Now we find the Fourier transform of the convolution.
\begin{thm}
If $\mu\in M(G)$ and $\nu\in\mathcal{M}(G,X)$ then for $[\pi]\in\widehat{G},$ $$\widehat{\mu*\nu}(\pi)=d_\pi\widehat{\nu}(\pi)\widehat{\mu}(\pi).$$
\end{thm}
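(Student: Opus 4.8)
The plan is to reduce the identity to the operator factorization $T_{\mu*\nu}=T_\nu\circ C_\mu$ of Proposition \ref{p2}, combined with the multiplicativity $\pi(ts)=\pi(t)\pi(s)$ of the representation. Since $\nu\in\mathcal{M}(G,X)$, the operator $T_\nu$ is defined on $C(G)$, and the matrix coefficients $\overline{\pi(\cdot)_{ji}}$ are continuous, so every expression below remains inside $C(G)$, where $T_\nu$ acts. The starting point is the entrywise description of the two transforms: from the definitions and from $T_\nu(f)=\int_G f\,d\nu$ one has $T_\nu(\overline{\pi(\cdot)_{ki}})=d_\pi\,\widehat{\nu}(\pi)_{ik}$ (this is Remark \ref{FTVOp}), while the scalar Fourier--Stieltjes transform satisfies $\widehat{\mu}(\pi)_{kj}=\frac{1}{d_\pi}\int_G\overline{\pi(t)_{jk}}\,d\mu(t)$, that is, $\int_G\overline{\pi(t)_{jk}}\,d\mu(t)=d_\pi\,\widehat{\mu}(\pi)_{kj}$.

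First I would compute the continuous function $C_\mu(\overline{\pi(\cdot)_{ji}})$. Expanding $\pi(ts)=\pi(t)\pi(s)$ gives $\overline{\pi(ts)_{ji}}=\sum_{k=1}^{d_\pi}\overline{\pi(t)_{jk}}\;\overline{\pi(s)_{ki}}$, and since the sum is finite I may integrate term by term against $\mu$ to obtain
\[
C_\mu(\overline{\pi(\cdot)_{ji}})(s)=\int_G\overline{\pi(ts)_{ji}}\,d\mu(t)=\sum_{k=1}^{d_\pi}d_\pi\,\widehat{\mu}(\pi)_{kj}\,\overline{\pi(s)_{ki}},
\]
which is a finite linear combination of the continuous matrix coefficients $\overline{\pi(\cdot)_{ki}}$. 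Applying the linear operator $T_\nu$ term by term and invoking the entrywise formula above yields
\[
d_\pi\,\widehat{\mu*\nu}(\pi)_{ij}=T_{\mu*\nu}(\overline{\pi(\cdot)_{ji}})=T_\nu\big(C_\mu(\overline{\pi(\cdot)_{ji}})\big)=\sum_{k=1}^{d_\pi}d_\pi\,\widehat{\mu}(\pi)_{kj}\,d_\pi\,\widehat{\nu}(\pi)_{ik}.
\]
Dividing by $d_\pi$ and recognizing $\sum_{k}\widehat{\nu}(\pi)_{ik}\widehat{\mu}(\pi)_{kj}=(\widehat{\nu}(\pi)\widehat{\mu}(\pi))_{ij}$ gives $\widehat{\mu*\nu}(\pi)_{ij}=d_\pi(\widehat{\nu}(\pi)\widehat{\mu}(\pi))_{ij}$, which is precisely the asserted matrix identity.

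The only step demanding genuine care is the passage to $C_\mu(\overline{\pi(\cdot)_{ji}})(s)=\sum_k d_\pi\,\widehat{\mu}(\pi)_{kj}\,\overline{\pi(s)_{ki}}$: one must check that splitting the integral along the representation-product expansion and pulling the scalar $\mu$-integrals outside are legitimate. Both are immediate, since the expansion has only finitely many terms and each integrand is a bounded continuous function, so linearity of the scalar integral suffices; no vector-valued Fubini theorem is required, as that interchange has already been packaged into Proposition \ref{p2}. An equivalent but less economical route would instead establish the integration formula $\int_G f\,d(\mu*\nu)=\int_G\int_G f(st)\,d\mu(s)\,d\nu(t)$ for $f\in C(G)$ and then substitute $f=\overline{\pi(\cdot)_{ji}}$; I prefer the factorization approach precisely because it sidesteps re-proving that interchange.
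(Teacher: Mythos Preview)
Your proof is correct and follows essentially the same route as the paper: both invoke Proposition~\ref{p2} to write $\widehat{\mu*\nu}(\pi)_{ij}=\frac{1}{d_\pi}T_\nu\big(C_\mu(\overline{\pi(\cdot)_{ji}})\big)$, expand $C_\mu(\overline{\pi(\cdot)_{ji}})$ via $\overline{\pi(ts)_{ji}}=\sum_k\overline{\pi(t)_{jk}}\,\overline{\pi(s)_{ki}}$, and then apply $T_\nu$ termwise using Remark~\ref{FTVOp}. Your additional remarks on why the finite-sum interchange is harmless and the alternative ``direct integration'' route are accurate but not needed for the argument.
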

\begin{proof}Let $[\pi]\in\widehat{G}.$ Note that for $1\leq i,j\leq d_\pi$ and $t,s\in G,$ $$\overline{\pi(ts)_{ji}}=\sum_{k=1}^{d_\pi}\overline{\pi_{jk}(t)\pi_{ki}(s)}.$$ Using Proposition \ref{p2}, 
\begin{align*}
\widehat{\mu*\nu}(\pi)_{ij}=&\frac{1}{d_\pi}\int_G\overline{\pi(t)_{ji}}\,d\mu*\nu(t)=\frac{1}{d_\pi}T_\nu(C_\mu(\overline{\pi(\cdot)_{ji}}))=\frac{1}{d_\pi}T_\nu\left(\int_G\overline{\pi(t\cdot)_{ji}}\,d\mu(t)\right)\\=&\frac{1}{d_\pi}T_\nu\left(\int_G\sum_{k=1}^{d_\pi}\overline{\pi_{jk}(t)\pi_{ki}(\cdot)}\,d\mu(t)\right)=\sum_{k=1}^{d_\pi}\widehat{\mu}(\pi)_{kj}T_\nu(\overline{\pi_{ki}(\cdot)})\\=&d_\pi\sum_{k=1}^{d_\pi}\widehat{\mu}(\pi)_{kj}\widehat{\nu}(\pi)_{ik}=d_\pi\sum_{k=1}^{d_\pi}\widehat{\nu}(\pi)_{ik}\widehat{\mu}(\pi)_{kj}=d_\pi(\widehat{\nu}(\pi)\widehat{\mu}(\pi))_{ij},
\end{align*}
where $(\widehat{\nu}(\pi)\widehat{\mu}(\pi))_{ij}$ is the $(i,j)$th entry of the matrix $\widehat{\nu}(\pi)\widehat{\mu}(\pi)\in M_{d_\pi}(X).$ Hence 
\begin{align*}
\widehat{\mu*\nu}(\pi)&=[(\widehat{\mu*\nu}(\pi))_{ij}]_{d_\pi\times d_\pi}=[d_\pi(\widehat{\nu}(\pi)\widehat{\mu}(\pi))_{ij}]_{d_\pi\times d_\pi}=d_\pi\widehat{\nu}(\pi)\widehat{\mu}(\pi). \qedhere
\end{align*}
\end{proof}
Now we define another convolution of measures.
\begin{defn}\label{d3}The convolution of $\nu\in M(G,X)$ and $\mu\in M(G)$ is defined by $$\nu*\mu(A)=\int_G\nu(At^{-1})\,d\mu(t),~A\in\mathfrak{B}(G),$$ provided that $t\mapsto\nu(At^{-1})\in L^1(\mu).$ 
\end{defn}
For $\nu\in\mathcal{M}(G,X)$ and $A\in\mathfrak{B}(G),$ note that the mapping $t\mapsto\nu(At^{-1})$ is measurable and bounded by $\|\nu\|$. Therefore $t\mapsto\nu(At^{-1})\in L^1(\mu).$ Hence the Definition \ref{d3} makes sense. Also note that $\nu*\mu\in M(G,X)$ with $\|\nu*\mu\|\leq\|\nu\|\|\mu\|.$
\begin{prop}
The group $G$ is abelian if and only if $\mu*\nu=\nu*\mu$ for all $\mu\in M(G)$ and $\nu\in\mathcal{M}(G,X).$ 
\end{prop}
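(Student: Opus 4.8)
The plan is to treat the two implications separately: the forward direction via a Fubini argument after pairing with functionals, and the converse by exhibiting explicit point masses that break commutativity on a non-abelian group.

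For the forward implication, assume $G$ is abelian and fix $A\in\mathfrak{B}(G)$. The first step is to rewrite both convolutions as iterated scalar integrals of the bounded Borel function $(u,v)\mapsto\chi_A(uv)$ on $G\times G$. Using the elementary identity $\mu(At^{-1})=\int_G\chi_A(st)\,d\mu(s)$ (and the same with $\nu$ in place of $\mu$), together with the fact that $\langle\int_G g\,d\nu,x^\prime\rangle=\int_G g\,d\langle\nu,x^\prime\rangle$ for bounded measurable $g$, I would obtain, for each $x^\prime\in X^\prime$, the expressions $\langle\mu*\nu(A),x^\prime\rangle=\int_G\int_G\chi_A(uv)\,d\mu(u)\,d\langle\nu,x^\prime\rangle(v)$ and $\langle\nu*\mu(A),x^\prime\rangle=\int_G\int_G\chi_A(vu)\,d\langle\nu,x^\prime\rangle(v)\,d\mu(u)$. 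Since $G$ is abelian we may replace $vu$ by $uv$, and since $\mu$ and $\langle\nu,x^\prime\rangle$ are finite scalar measures and $(u,v)\mapsto\chi_A(uv)$ is bounded and Borel, the classical Fubini theorem allows the two orders of integration to be interchanged, so the two scalar quantities agree. As this holds for all $x^\prime\in X^\prime$ and functionals separate the points of $X$, I conclude $\mu*\nu(A)=\nu*\mu(A)$ for every $A$, i.e. $\mu*\nu=\nu*\mu$. I would remark that the Fourier-transform route is \emph{not} available here, because the uniqueness statement in Proposition \ref{p3} requires absolute continuity, while $\mu*\nu$ need not lie in $M_{ac}(G,X)$.

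For the converse I argue contrapositively. Suppose $G$ is not abelian and pick $a,b\in G$ with $ab\neq ba$, a nonzero vector $x_0\in X$, and set $\mu=\delta_a\in M(G)$ and $\nu=\delta_b\,x_0$, the latter being a point-mass vector measure, which is clearly regular, so $\nu\in\mathcal{M}(G,X)$. Using $\delta_a(At^{-1})=\chi_A(at)$ one computes $\mu*\nu(A)=\int_G\chi_A(at)\,d\nu(t)=\chi_A(ab)\,x_0$, while $\nu*\mu(A)=\int_G\delta_b(At^{-1})\,x_0\,d\delta_a(t)=\delta_b(Aa^{-1})\,x_0=\chi_A(ba)\,x_0$. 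Taking the Borel singleton $A=\{ab\}$ (a point is closed as $G$ is Hausdorff) gives $\mu*\nu(A)=x_0\neq0$ but $\nu*\mu(A)=\chi_{\{ab\}}(ba)\,x_0=0$ since $ba\neq ab$. Hence $\mu*\nu\neq\nu*\mu$, establishing the converse.

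The only genuinely delicate point is the interchange of integrations in the forward direction; pairing against $x^\prime\in X^\prime$ reduces the vector-measure Fubini to the classical scalar Fubini, after which the relation $uv=vu$ finishes the argument. Everything else is routine bookkeeping with point masses and the identities $\mu(At^{-1})=\int_G\chi_A(st)\,d\mu(s)$ and $\delta_a(At^{-1})=\chi_A(at)$.
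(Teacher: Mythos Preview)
Your proof is correct and follows essentially the same approach as the paper: for the forward implication the paper simply cites \cite[Proposition 4.5]{B} while you spell out the underlying scalar Fubini argument after pairing with functionals, and for the converse both arguments use the point-mass measures $\mu=\delta_a$ and $\nu=x_0\delta_b$ to detect $\delta_{ab}\neq\delta_{ba}$. The only cosmetic difference in the converse is that the paper first pairs against a functional $x_0'$ with $\langle x_0,x_0'\rangle=1$ to pass to scalar convolution of Dirac measures, whereas you compute the vector measures $\mu*\nu$ and $\nu*\mu$ directly and evaluate at the singleton $A=\{ab\}$.
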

\begin{proof}
If $G$ is abelian then $\mu*\nu=\nu*\mu$, see \cite[Proposition 4.5]{B}. Conversely, suppose that $\mu*\nu=\nu*\mu$ for all $\mu\in M(G)$ and $\nu\in\mathcal{M}(G,X).$ Let $x^\prime\in X^\prime$ and $A\in\mathfrak{B}(G).$ Then, $\langle\mu*\nu(A),x^\prime\rangle=\langle\nu*\mu(A),x^\prime\rangle,$ 
i.e., $\mu*\langle\nu,x^\prime\rangle(A)=\langle\nu,x^\prime\rangle*\mu(A).$
Let $t,s\in G,$ and $x_0\in X.$ Now choose $x_0^\prime\in X^\prime$ such that $\langle x_0,x_0^\prime\rangle=1.$ Consider $\mu=\delta_t$, the Dirac measure on G at $t$ and choose $\nu=x_0\delta_s\in\mathcal{M}(G,X).$ Note that $\langle\nu,x_0^\prime\rangle=\delta_s.$ Then we obtain $\delta_t*\delta_s=\delta_s*\delta_t,$ i.e., $\delta_{ts}=\delta_{st}.$ Hence $ts=st$ for every $t,s\in G.$
\end{proof}

\section{Integrability properties of the convolution product}
In this section, we prove some integrability properties for the convolution product. Finally, we show that the usual convolution of a function in $L^1(\nu)$ with a function in $L^p(G)$ belongs to $L^p(\nu).$

For $f\in L^1(G)$ and $\nu\in M(G,X),$ we write $f*\nu=\mu_f*\nu$ where $\mu_f$ is given by $d\mu_f=f\,dm_G.$ We say that $f*\nu\in C(G,X)$ if $d(f*\nu)=f_\nu\,dm_G,$ for some $f_\nu\in C(G,X).$ Next result gives an analogue of the Young's inequality for $f*\nu$.
\begin{prop}\label{A1}\mbox{} Let $\nu\in M(G,X)$.
\begin{enumerate}[(i)]
\item Let $1\leq p<\infty.$ If $f\in L^p(G)$ then $f*\nu\in P_p(G,X).$ Further $$\|f*\nu\|_{P_p(G,X)}\leq\|f\|_p\|\nu\|.$$
\item Let $1<p<\infty.$ If $\nu\in M_p(G,X)$ and $f\in L^q(G)$ with $q^\prime>p,$ then $f*\nu\in P_r(G,X)$ where $\frac{1}{p}+\frac{1}{q}=1+\frac{1}{r}.$ Further, $$\|f*\nu\|_{P_r(G,X)}\leq\|f\|_q\|\nu\|_{p,m_G}.$$
\end{enumerate}
\end{prop}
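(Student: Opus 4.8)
The plan is to reduce both estimates to the classical Young inequality by writing the integration operator of $f*\nu$ as $T_\nu$ composed with a convolution operator, and then to establish membership in $P_p(G,X)$ (resp. $P_r(G,X)$) by a separate density argument.

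The starting point is the following identity, valid for every simple $\psi=\sum_{A}\alpha_A\chi_A\in S(G)$. Using the definition of $f*\nu=\mu_f*\nu$, the elementary computation $\mu_f(At^{-1})=\int_G\chi_A(st)f(s)\,dm_G(s)$, and linearity of the (finite) sum, one gets
\[
\int_G\psi\,d(f*\nu)=\int_G\Big(\int_G\psi(st)f(s)\,dm_G(s)\Big)\,d\nu(t)=\int_G(\tilde f*\psi)(t)\,d\nu(t)=T_\nu(\tilde f*\psi),
\]
where $\tilde f(s)=f(s^{-1})$ and $\tilde f*\psi=C_{\mu_f}(\psi)$ is exactly the operator appearing in Proposition \ref{p2}; since compact groups are unimodular, $\|\tilde f\|_s=\|f\|_s$ for all $s$. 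Because $\|f*\nu\|_{p,m_G}$ is by definition the supremum of $\|\int_G\psi\,d(f*\nu)\|$ over simple $\psi$ in the unit ball of $L^{p'}(G)$, every estimate reduces to bounding the map $\psi\mapsto T_\nu(\tilde f*\psi)$.

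For (i), the classical Young inequality gives $\|\tilde f*\psi\|_\infty\le\|f\|_p\|\psi\|_{p'}$ (the exponents satisfy $\tfrac1p+\tfrac1{p'}=1$), and the elementary bound $\|T_\nu(g)\|\le\|g\|_\infty\|\nu\|$, valid for every $\nu\in M(G,X)$, then yields $\|\int_G\psi\,d(f*\nu)\|\le\|f\|_p\|\nu\|\,\|\psi\|_{p'}$, whence $\|f*\nu\|_{p,m_G}\le\|f\|_p\|\nu\|$. For (ii) the scheme is identical, but $\tilde f*\psi$ is now only an $L^{p'}$-function: one chooses the target exponent so that $\tfrac1q+\tfrac1{r'}=1+\tfrac1{p'}$, which is precisely equivalent to $\tfrac1p+\tfrac1q=1+\tfrac1r$, so that Young gives $\|\tilde f*\psi\|_{p'}\le\|f\|_q\|\psi\|_{r'}$. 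Since $\nu\in M_p(G,X)$ with $1<p<\infty$, Lemma \ref{Mp}(ii) lets $T_\nu$ act boundedly on $L^{p'}(G)$ with norm $\|\nu\|_{p,m_G}$, and composing yields $\|f*\nu\|_{r,m_G}\le\|f\|_q\|\nu\|_{p,m_G}$; here the hypothesis $q'>p$ guarantees $r<\infty$.

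The step requiring the most care is showing that $f*\nu$ actually lies in $P_p(G,X)$ (resp. $P_r(G,X)$), i.e. in the closure of $S(G,X)$, rather than merely in $M_p(G,X)$. I would first treat $f\in C(G)$: the candidate density $g(u)=\int_Gf(ut^{-1})\,d\nu(t)=T_\nu(F_u)$, with $F_u(t):=f(ut^{-1})$, is continuous because $u\mapsto F_u$ is continuous from $G$ into $C(G)$ by uniform continuity of $f$ and $T_\nu$ is bounded; a Fubini argument for vector integrals then identifies $d(f*\nu)=g\,dm_G$, so $f*\nu\in C(G,X)\subseteq P_p(G,X)$, the inclusion being the density of $C(G,X)$ in $P_p(G,X)$ recorded in the preliminaries. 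For general $f$, I would pick $f_n\in C(G)$ with $f_n\to f$ in $L^p(G)$ (resp. $L^q(G)$); the norm bound already established gives $\|(f_n-f)*\nu\|_{p,m_G}\le\|f_n-f\|_p\|\nu\|\to0$ (resp. its $r$-analogue), so $f_n*\nu\to f*\nu$ in $M_p(G,X)$, and closedness of $P_p(G,X)$ forces the limit into $P_p(G,X)$ with the stated estimate. The genuine obstacle is exactly this closure step: the analytic bounds are routine Young-type estimates, whereas membership in the closed subspace $P_p(G,X)$ must be produced by exhibiting a continuous density in the smooth case and then invoking completeness of $P_p(G,X)$ together with the norm inequality to pass to the limit.
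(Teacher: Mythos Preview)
Your argument is correct but takes a different route from the paper's. The paper works on the ``density side'': for continuous $f$ it identifies the $X$-valued density $f_\nu(t)=\int_G f(ts^{-1})\,d\nu(s)$, then bounds $\|f*\nu\|_{P_p}=\sup_{x'\in B_{X'}}\|\langle f_\nu,x'\rangle\|_p$ by pairing with each $x'$ and applying scalar estimates (H\"older/Fubini for (i), the Radon--Nikodym derivative $h_{x'}\in L^p(G)$ together with scalar Young for (ii)). You instead work on the ``operator side'': you recognise the factorisation $T_{f*\nu}=T_\nu\circ C_{\mu_f}$ (this is the content of Proposition~\ref{p2}, which you re-derive directly from the definition so that regularity of $\nu$ is not needed), and then bound the $p$-semivariation by combining scalar Young on the convolution factor with the operator norm of $T_\nu$ acting on $C(G)$ in (i) and on $L^{p'}(G)$ in (ii). Your approach has the advantage of treating (i) and (ii) by exactly the same template, merely changing the Young exponents and the codomain of the convolution; the paper's approach is somewhat more concrete in that it produces the pointwise scalar estimate $|\langle f_\nu(t),x'\rangle|\le(|f|*|h_{x'}|)(t)$ in case (ii). Both proofs share the same closure step---producing a continuous density for $f\in C(G)$ and then approximating---so the part you flagged as the genuine obstacle is handled in the paper in the same way you propose.
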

\begin{proof}
(i) By density, it is enough to prove for continuous functions. Let $f\in C(G).$ Then $f*\nu\in C(G,X)$ and hence, by definition, $d(f*\nu)=f_\nu dm_G,$ for some $f_\nu\in C(G,X).$ By following the lines as in \cite[Proposition 4.7]{B}, it can be shown that $f_\nu(\cdot)=\int_G f(\cdot s^{-1})d\nu(s).$ Let $x^\prime\in X^\prime.$ Then, by the H\"older's inequality and the Fubini's theorem, it follows that $\int_G|\langle f_\nu(t),x^\prime \rangle|^p\,dm_G(t)\leq\|f\|_p^p\|\nu\|^p\|x^\prime\|_{X^\prime}^p.$ Thus (i) follows.

(ii) Let $x^\prime\in X^\prime$ and $\nu\in M_p(G,X).$ By Lemma \ref{Mp}, there exists $h_{x^\prime}\in L^p(G)$ such that $d\langle\nu,x^\prime\rangle=h_{x^\prime}\,dm_G.$ Let $f\in C(G).$ As mentioned in (i), $d(f*\nu)=f_\nu dm_G,$ where $f_\nu(\cdot)=\int_G f(\cdot s^{-1})\,d\nu(s).$ Then $\int_G|\langle f_\nu(t),x^\prime \rangle|^r\,dm_G(t)\leq\int_G(|f|*|h_{x^\prime}|(t))^r\,dm_G(t).$ Thus by the classical Young's inequality, the needed inequality follows. As $C(G)$ is dense in $L^q(G),$ the proof is complete.
\end{proof}
\begin{prop}\label{A2}\mbox{}\begin{enumerate}[(i)]
\item If $\nu\in\mathcal{M}(G,X),$ then $\nu_f\in\mathcal{M}(G,X)$ for every $f\in L^1(\nu).$
\item Let $1<p\leq\infty.$ If $\nu\in M_p(G,X)$ and $f\in L^q(G)$ for some $p^\prime\leq q\leq\infty,$ then $\nu_f\in M_r(G,X)$ where $\frac{1}{p}+\frac{1}{q}=\frac{1}{r}.$ Further, $\|\nu_f\|_{r,m_G}\leq\|\nu\|_{p,m_G}\|f\|_q.$
\end{enumerate}
\end{prop}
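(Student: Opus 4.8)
The plan is to treat the two parts separately, both resting on the identity that for $f\in L^1(\nu)$ the restricted semivariation of $\nu_f$ is given by $\|\nu_f\|(B)=\|f\chi_B\|_\nu$ for every $B\in\mathfrak{B}(G)$. This is established exactly as the stated equality $\|\nu_f\|=\|f\|_\nu$: for each $x^\prime\in B_{X^\prime}$ the total variation of the scalar measure $\langle\nu_f,x^\prime\rangle$ on $B$ equals $\int_B|f|\,d|\langle\nu,x^\prime\rangle|$, and taking the supremum over $x^\prime$ gives $\|\nu_f\|(B)=\sup_{x^\prime\in B_{X^\prime}}\int_B|f|\,d|\langle\nu,x^\prime\rangle|=\|f\chi_B\|_\nu$.

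For (i), I would verify regularity of $\nu_f$ directly. Fix $A\in\mathfrak{B}(G)$ and $\epsilon>0$. Since $\nu\in\mathcal{M}(G,X)$, Lemma \ref{dense} gives that $C(G)$ is dense in $L^1(\nu)$, so choose $g\in C(G)$ with $\|f-g\|_\nu<\epsilon/2$. Using the regularity of $\nu$ itself, pick an open set $U$ and a closed set $F$ with $F\subset A\subset U$ and $\|\nu\|(U\setminus F)<\epsilon/(2(\|g\|_\infty+1))$. Then the identity above together with the estimate
$$\|f\chi_{U\setminus F}\|_\nu\le\|(f-g)\chi_{U\setminus F}\|_\nu+\|g\chi_{U\setminus F}\|_\nu\le\|f-g\|_\nu+\|g\|_\infty\,\|\nu\|(U\setminus F)<\epsilon$$
shows $\|\nu_f\|(U\setminus F)<\epsilon$, which is precisely the regularity of $\nu_f$.

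For (ii), I would work from the partition definition of the $r$-semivariation. Since $\nu\in M_p(G,X)$ with $1<p\le\infty$, Lemma \ref{Mp}(ii) gives $L^{p^\prime}(G)\subset L^1(\nu)$ and extends $T_\nu$ to a bounded operator on $L^{p^\prime}(G)$ with $\|T_\nu\|=\|\nu\|_{p,m_G}$; as $G$ carries the probability measure $m_G$ and $q\ge p^\prime$, we have $f\in L^q(G)\subset L^{p^\prime}(G)\subset L^1(\nu)$, so $\nu_f$ is well-defined. Given a finite partition $\rho$ with $\psi:=\sum_{A\in\rho}\alpha_A\chi_A\in B_{L^{r^\prime}(G)}$, one has $\sum_{A\in\rho}\alpha_A\nu_f(A)=\int_G\psi f\,d\nu=T_\nu(\psi f)$, whence
$$\Big\|\sum_{A\in\rho}\alpha_A\nu_f(A)\Big\|\le\|\nu\|_{p,m_G}\,\|\psi f\|_{p^\prime}\le\|\nu\|_{p,m_G}\,\|\psi\|_{r^\prime}\,\|f\|_q\le\|\nu\|_{p,m_G}\,\|f\|_q,$$
the middle step being H\"older's inequality with the relation $\frac{1}{p^\prime}=\frac{1}{r^\prime}+\frac{1}{q}$, which is just the rewriting of $\frac{1}{p}+\frac{1}{q}=\frac{1}{r}$. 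Taking the supremum over all admissible partitions yields $\nu_f\in M_r(G,X)$ with $\|\nu_f\|_{r,m_G}\le\|\nu\|_{p,m_G}\|f\|_q$.

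The only genuinely delicate points, both routine, are justifying the identity $\sum_{A\in\rho}\alpha_A\nu_f(A)=T_\nu(\psi f)$, i.e. that integrating the simple function $\psi$ against $\nu_f$ coincides with integrating $\psi f$ against $\nu$ and agrees with the extended operator $T_\nu$ evaluated at $\psi f\in L^{p^\prime}(G)$, and confirming the exponent bookkeeping so that H\"older applies with the correct conjugate relation. The boundary case $q=p^\prime$ forces $r=1$ and $r^\prime=\infty$, but the partition formulation, as opposed to approximating $\psi$ by elements of $C(G)$ in the $L^{r^\prime}$-norm, handles this uniformly, so no separate argument is needed.
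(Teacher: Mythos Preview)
Your proof is correct, but takes a different route from the paper in part (i) and a more hands-on version of the same route in part (ii).

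For (i), the paper argues operator-theoretically: by Lemma~\ref{dense} one may reduce to $f\in C(G)$, and then observes that $T_{\nu_f}=T_\nu\circ M_f$, where $M_f$ is the multiplication operator on $C(G)$. Since $\nu$ is regular, $T_\nu$ is weakly compact, hence so is the composition $T_{\nu_f}$, and the Diestel--Uhl characterisation (regular $\Leftrightarrow$ $T_\nu$ weakly compact) gives $\nu_f\in\mathcal{M}(G,X)$. Your approach instead verifies regularity directly from the definition, via the identity $\|\nu_f\|(B)=\|f\chi_B\|_\nu$ and an $\epsilon/2$-splitting between a continuous approximant and the original $f$. Your argument is more elementary---it avoids the weak-compactness machinery entirely---while the paper's argument is shorter once that machinery is in place and generalises more readily (e.g.\ to other operator ideals).

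For (ii), the two arguments are essentially the same: both rest on the factorisation $T_{\nu_f}=T_\nu\circ M_f$ together with H\"older's inequality in the form $\frac{1}{p'}=\frac{1}{r'}+\frac{1}{q}$. The paper phrases this as $M_f\colon L^{r'}(G)\to L^{p'}(G)$ bounded, takes adjoints, and invokes Dinculeanu's identification $\|T_{\nu_f}\|_{\mathcal{B}(L^{r'}(G),X)}=\|\nu_f\|_{r,m_G}$; you unpack that identification by working directly with the partition definition of the $r$-semivariation. Your version is self-contained, the paper's is terser.
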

\begin{proof}
(i) By Lemma \ref{dense}, we can assume that $f\in C(G).$ In order to do this, by \cite[Corollary 14, Pg. 159]{DU}, it is enough to show that the operator $T_{\nu_f}$ is weakly compact. Now, note that $T_{\nu_f} = T_\nu\circ M_f,$ where $M_f$ denotes the multiplication operator on $C(G),$ given by $M_f(g)=fg.$ The proof of (i) is complete, by \cite[Proposition 5.2, Pg. 183]{C} and the fact that $\nu$ is regular.

(ii) Let $f\in L^q(G).$ By Lemma \ref{Mp}, we have, $L^q(G)\subset L^{p^\prime}(G)\subset L^1(\nu)$ and hence $f\in L^1(\nu).$ Thus $\nu_f$ is well-defined. Further, note that $T_{\nu_f}=T_\nu\circ M_f,$ where $M_f$ denotes the multiplication operator from $L^p(G)$ to $L^r(G)$ given as in (i). Therefore, the adjoint $T_{\nu_f}^*\in \mathcal{B}(X^\prime, L^r(G))$ and hence by \cite[Theorem 1, Pg. 259]{D}, $\nu_f\in M_r(G,X).$ Moreover, $\|\nu_f\|_{r,m_G}=\|T_{\nu_f}\|\leq\|T_\nu\|\|M_f\|=\|\nu\|_{p,m_G}\|f\|_q.$
\end{proof}
\begin{thm}\label{t2}Let $\nu\in M(G,X).$
\mbox{}\begin{enumerate}[(i)]
\item Let $1\leq p<\infty.$ If $f\in L^p(G)$ and $g\in L^1(\nu),$ then $f*^\nu g\in P_p(G,X).$ Further, $$\|f*^\nu g\|_{P_p(G,X)}\leq\|f\|_p\|g\|_\nu.$$
\item If $\nu\in M_{p_1}(G,X), g\in L^{p_2}(G)$ and $f\in L^{p_3}(G)$ where $0<\frac{1}{p_1}+\frac{1}{p_2}<1$ and $\frac{1}{p_1}+\frac{1}{p_2}+\frac{1}{p_3}>1,$ 
then $f*^\nu g\in P_r(G,X)$ where $\frac{1}{p_1}+\frac{1}{p_2}+\frac{1}{p_3}=1+\frac{1}{r}$. Further, $$\|f*^\nu g\|_{P_r(G,X)}\leq\|\nu\|_{p_1,m_G}\|g\|_{p_2}\|f\|_{p_3}.$$
\end{enumerate}
\end{thm}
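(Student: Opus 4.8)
The plan is to reduce everything to the classical scalar theory by testing $f*^\nu g$ against functionals $x'\in X'$, exploiting the fact that the norm on $P_p(G,X)$ is precisely $\|\phi\|_{P_p(G,X)}=\sup_{x'\in B_{X'}}\|\langle\phi,x'\rangle\|_p$. For a fixed $x'\in X'$, unravelling the definition of the vector-valued convolution gives
\[
\langle f*^\nu g(t),x'\rangle=\int_G f(ts^{-1})g(s)\,d\langle\nu,x'\rangle(s),
\]
which I would recognize as an ordinary convolution of the function $f$ with the scalar measure $\sigma_{x'}$ defined by $d\sigma_{x'}=g\,d\langle\nu,x'\rangle$. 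Thus both parts amount to controlling the $L^p$- (resp.\ $L^r$-) norm of such a convolution uniformly over $x'\in B_{X'}$.

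For part (i), I would note that $\sigma_{x'}$ is a genuine scalar measure with $|\sigma_{x'}|(G)=\int_G|g|\,d|\langle\nu,x'\rangle|\le\|g\|_\nu$ for $x'\in B_{X'}$, since $g\in L^1(\nu)$. The classical Young inequality for the convolution of an $L^p(G)$-function with a measure then yields
\[
\|\langle f*^\nu g,x'\rangle\|_p=\|f*\sigma_{x'}\|_p\le\|f\|_p\,|\sigma_{x'}|(G)\le\|f\|_p\|g\|_\nu,
\]
and taking the supremum over $x'\in B_{X'}$ gives the desired bound $\|f*^\nu g\|_{P_p(G,X)}\le\|f\|_p\|g\|_\nu$.

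For part (ii), since $\nu\in M_{p_1}(G,X)$ with $p_1>1$, Lemma \ref{Mp}(ii) supplies for each $x'$ a density $h_{x'}\in L^{p_1}(G)$ with $d\langle\nu,x'\rangle=h_{x'}\,dm_G$ and $\|h_{x'}\|_{p_1}\le\|\nu\|_{p_1,m_G}\|x'\|$; the same lemma guarantees $g\in L^{p_2}(G)\subset L^{p_1'}(G)\subset L^1(\nu)$ by the assumption $\frac1{p_1}+\frac1{p_2}<1$, so that $f*^\nu g$ is well-defined. Now $\langle f*^\nu g,x'\rangle=f*(gh_{x'})$, and I would first apply H\"older's inequality to obtain $\|gh_{x'}\|_a\le\|g\|_{p_2}\|h_{x'}\|_{p_1}$ with $\frac1a=\frac1{p_1}+\frac1{p_2}$ (the hypotheses force $1<a<\infty$), and then classical Young's inequality $\|f*(gh_{x'})\|_r\le\|f\|_{p_3}\|gh_{x'}\|_a$, applicable precisely because $\frac1{p_3}+\frac1a=\frac1{p_1}+\frac1{p_2}+\frac1{p_3}>1$ forces $1<r<\infty$ via $1+\frac1r=\frac1{p_3}+\frac1a$. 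Combining and taking the supremum over $x'\in B_{X'}$ delivers the claimed estimate.

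The main obstacle is not the norm estimate but the membership assertion, namely that $f*^\nu g$ lies in the closure $P_p(G,X)$ (resp.\ $P_r(G,X)$) of the simple functions, rather than merely having dual slices in $L^p(G)$. I would handle this by density: approximate $f$ by continuous functions in $L^p(G)$ (resp.\ $L^{p_3}(G)$) and $g$ by simple functions in $L^1(\nu)$ (resp.\ by continuous functions in $L^{p_2}(G)$). For such well-behaved $f$ and $g$, using the uniform continuity of $t\mapsto f(ts^{-1})$ together with the bound $\|\int_G\varphi\,g\,d\nu\|_X\le\|\varphi\|_\infty\|g\|_\nu$, one verifies that $f*^\nu g$ is a continuous $X$-valued function, hence lies in $C(G,X)\subset P_p(G,X)$. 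The bilinear estimates established above show that the approximants form a Cauchy net in $P_p(G,X)$ (resp.\ $P_r(G,X)$); since this space is closed and, for each fixed $x'$, the dual slices converge in $L^p(G)$ (resp.\ $L^r(G)$) to $\langle f*^\nu g,x'\rangle$, the limit must coincide with $f*^\nu g$, which completes the proof.
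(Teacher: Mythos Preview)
Your argument is correct. The underlying mathematics is the same as the paper's, but the organization differs: the paper observes the identity $f*^\nu g=f*\nu_g$ (i.e.\ the vector-valued convolution of $f$ and $g$ is the convolution of $f$ with the vector measure $\nu_g$) and then invokes Propositions~\ref{A1} and~\ref{A2}, which already package the density argument for membership in $P_p(G,X)$ and the scalar Young/H\"older estimates via the operator $T_{\nu_g}=T_\nu\circ M_g$. You instead bypass $\nu_g$ entirely and dual-slice $f*^\nu g$ directly, reducing to the classical scalar Young inequality for $f*\sigma_{x'}$ (part~(i)) and $f*(gh_{x'})$ (part~(ii)), and then run the density/continuity argument yourself. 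Both routes are equally short once the auxiliary propositions are accounted for; the paper's factorization through $\nu_g$ has the advantage that the $P_p$-membership comes for free from Proposition~\ref{A1}, whereas you have to redo that step, but your approach is more self-contained and makes the dependence on scalar convolution inequalities completely transparent.
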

\begin{proof}Note that $f*^\nu g=f*\nu_g.$ Since $g\in L^1(\nu),$ it is clear that $\nu_g\in M(G,X)$ with $\|\nu_g\|=\|g\|_\nu.$ Thus (i) follows from Proposition \ref{A1}(i). 

We shall now prove (ii). By assumption $\frac{1}{p_1}+\frac{1}{p_2}<1$ and therefore $p_1^\prime<p_2$ and $p_1>1.$ Further, since $g\in L^{p_2}(G),$ by proposition \ref{A2}(ii), it follows that $\nu_g\in M_s(G,X)$ for some $s$ such that $\frac{1}{p_1}+\frac{1}{p_2}=\frac{1}{s}.$ Since, by assumption, $\frac{1}{s}+\frac{1}{p_3}>1$ and $0<\frac{1}{p_1}+\frac{1}{p_2}<1,$ it follows that $p_3^\prime>s$ and $1<s<\infty.$ Thus, by Proposition \ref{A1}(ii), $f*\nu_g\in P_r(G,X)$ for some $r$ such that $\frac{1}{s}+\frac{1}{p_3}=1+\frac{1}{r}$ that is $\frac{1}{p_1}+\frac{1}{p_2}+\frac{1}{p_3}=1+\frac{1}{r}.$ Further, by Proposition \ref{A1}(ii), we have $\|f*\nu_g\|_{P_r(G,X)}\leq\|f\|_{p_3}\|\nu_g\|_{s,m_G}.$ Now, by Proposition \ref{A2}(ii), $\|\nu_g\|_{s,m_G}\leq\|\nu\|_{p_1,m_G}\|g\|_{p_2}.$ Hence (ii).
\end{proof}

By Theorem \ref{A3}, it is clear that we can consider the classical convolution of functions from $L^p(\nu),$ $1\leq p<\infty$ and $L^1(G).$ Our next result is in this direction. This theorem is the vector measure analogue of \cite[Proposition 2.39]{F}. For the case of compact abelian groups see \cite[Theorem 6.3]{B}.
\begin{thm}\label{T4}Let $\nu\in\mathcal{M}(G,X)$ be a semivariation translation invariant vector measure with $\nu(G)\neq 0$ and let $1\leq p<\infty.$
\begin{enumerate}[(i)]
\item If $f\in L^1(\nu)$ and $g\in L^p(G),$ then $f*g\in L^p(\nu).$ Further, $$\|f*g\|_{\nu,p}\leq\|f\|_\nu\|g\|_p\|\nu(G)\|^{-1/{p^\prime}}.$$ 
\item If $f\in L^p(\nu)$ and $g\in L^1(G),$ then $f*g\in L^p(\nu).$ Further, $$\|f*g\|_{\nu,p}\leq\|f\|_{\nu,p}\|g\|_1.$$
\end{enumerate}
\end{thm}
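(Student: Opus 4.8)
The plan is to treat the ordinary convolution as the vector-valued integral $f*g(t)=\int_G f(ts^{-1})g(s)\,dm_G(s)=\int_G g(s)\,\tau_s f(t)\,dm_G(s)$ and to exploit the two structural facts available: the $L^p(\nu)$-norm is invariant under the right translations $\tau_s$ (Proposition \ref{R1}) and $s\mapsto\tau_s f$ is continuous from $G$ into $L^p(\nu)$ (Proposition \ref{A5}). Part (ii), where the $L^p(\nu)$-regularity is carried by $f$, falls out directly from these. Part (i), where the regularity sits on $g\in L^p(G)$ instead, I would obtain by interpolating the endpoint $p=1$ of part (ii) against an elementary $L^\infty$-estimate; the factor $\|\nu(G)\|^{-1/p^\prime}$ in the bound is exactly what such an interpolation produces.

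For part (ii), fix $f\in L^p(\nu)$ and $g\in L^1(G)$. By Propositions \ref{A5} and \ref{R1} the map $s\mapsto g(s)\,\tau_s f$ is a continuous $L^p(\nu)$-valued function with $\int_G\|g(s)\,\tau_s f\|_{\nu,p}\,dm_G(s)=\|f\|_{\nu,p}\|g\|_1<\infty$, hence it is Bochner integrable into $L^p(\nu)$ and its integral has $\|\cdot\|_{\nu,p}$ at most $\|f\|_{\nu,p}\|g\|_1$. Since the inclusion $L^p(\nu)\hookrightarrow L^p(G)$ of Theorem \ref{A3} is bounded it commutes with the Bochner integral, and in $L^p(G)$ that integral is precisely the pointwise convolution $f*g$; this identifies the integral as $f*g$ and yields $f*g\in L^p(\nu)$ with the claimed estimate. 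I emphasise the Bochner route because a bare application of Minkowski's integral inequality to $\|f*g\|_{L^p(|\langle\nu,x^\prime\rangle|)}$ followed by a supremum over $x^\prime\in B_{X^\prime}$ only delivers membership in the weak space $L^p_w(\nu)$, whereas the vector-valued integral lands in $L^p(\nu)$ itself.

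For part (i), fix $f\in L^1(\nu)$ and view $T\colon g\mapsto f*g$ as one operator. The case $p=1$ of part (ii) gives $T\colon L^1(G)\to L^1(\nu)$ with norm at most $\|f\|_\nu$. At the opposite end, for $g\in L^\infty(G)$ one has the pointwise bound $\|f*g\|_\infty\le\|f\|_1\|g\|_\infty$, so $f*g\in L^\infty(\nu)$ and, invoking $\|f\|_1\le\|f\|_\nu\|\nu(G)\|^{-1}$ from Theorem \ref{A3}, $T\colon L^\infty(G)\to L^\infty(\nu)$ has norm at most $\|f\|_\nu\|\nu(G)\|^{-1}$. Interpolating with the interpolation theorem for vector-measure $L^p$-spaces \cite[Theorem 3.4]{FMNP}, reading $L^p(G)$ as the $L^p$-space of the scalar measure $m_G$, at the parameter $\theta=1/p^\prime$ fixed by $1/p=1-\theta$ gives $T\colon L^p(G)\to L^p(\nu)$ of norm at most $\|f\|_\nu^{1-\theta}\bigl(\|f\|_\nu\|\nu(G)\|^{-1}\bigr)^{\theta}=\|f\|_\nu\|\nu(G)\|^{-1/p^\prime}$, which is the asserted inequality.

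Two points need care, and I expect the interpolation to be the main obstacle. First, I must confirm that \cite[Theorem 3.4]{FMNP} genuinely applies to an operator whose domain runs through the ordinary Lebesgue scale $L^p(G)$ while its codomain runs through the vector-measure scale $L^p(\nu)$, with an admissible $L^\infty$-endpoint; this is the same theorem used in Corollary \ref{c1}, so I expect it to go through once $m_G$ is regarded as a scalar vector measure. Second, the interpolation as such controls the norm and a priori gives $f*g\in L^p_w(\nu)$; to land in the strong space $L^p(\nu)$ I would use density, approximating $g$ by functions $g_n\in C(G)$ in $L^p(G)$, noting that each $f*g_n$ is continuous and hence lies in $L^p(\nu)$, and concluding from the norm bound together with the fact that $L^p(\nu)$ is closed in $L^p_w(\nu)$.
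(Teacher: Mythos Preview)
Your proposal is essentially correct, but the route differs markedly from the paper's. One small slip: for general $g\in L^1(G)$ the map $s\mapsto g(s)\,\tau_s f$ is not continuous into $L^p(\nu)$, only strongly measurable (it is a measurable scalar times the continuous map $s\mapsto\tau_sf$); this is all Bochner integrability requires, so the argument survives unchanged.

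The paper instead follows the classical pointwise route to Young's inequality. After reducing to $f,g\in C(G)$ by density, it first handles $p=1$ by the estimate $\int_G|f*g|\,d|\langle\nu,x'\rangle|\le\int_G\|\tau_sf\|_\nu|g(s)|\,dm_G(s)=\|f\|_\nu\|g\|_1$, using norm translation invariance. For $p>1$ it does \emph{not} interpolate but applies H\"older's inequality pointwise: for (ii), $|f*g(t)|\le\|g\|_1^{1/p'}(|f|^p*|g|)^{1/p}(t)$, then raises to the $p$-th power and feeds into the $p=1$ case; for (i), the analogous splitting $|f*g(t)|\le\|f\|_1^{1/p'}(|f|*|g|^p)^{1/p}(t)$ together with $\|f\|_1\le\|f\|_\nu\|\nu(G)\|^{-1}$ from Theorem \ref{A3} gives the constant. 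Thus the paper reduces everything to the $p=1$ case by elementary inequalities, with no Bochner integral and no interpolation theorem.

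Your Bochner argument for (ii) has the advantage of landing directly in $L^p(\nu)$ rather than first in $L^p_w(\nu)$, and it makes the role of translation invariance transparent. The paper's argument, on the other hand, is entirely self-contained and avoids the applicability check for \cite[Theorem 3.4]{FMNP} at the $L^\infty$ endpoint that you flagged; since that theorem is invoked elsewhere in the paper (Corollary \ref{c1}) with an $L^\infty$ endpoint on the target side, your expectation that it applies is reasonable, but the pointwise H\"older route sidesteps the issue altogether.
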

\begin{proof}
Since $C(G)$ is dense in both $L^p(G)$ and $L^p(\nu)$ for all $1\leq p<\infty,$ it is enough to verify both (i) and (ii) for continuous functions. So, let $f,g\in C(G).$

We shall first assume that $p=1.$ Note that (i) and (ii) are same. By Proposition \ref{R1}, we have, for $x^\prime\in B_{X^\prime},$
\begin{align*}
\int_G|f*g(t)|\,d|\langle\nu,x^\prime\rangle|(t)\leq&\int_G\|\tau_sf\|_{L^1(|\langle\nu,x^\prime\rangle|)}|g(s)|\,dm_G(s)\\\leq&\int_G\underset{x^\prime\in B_{X^\prime}}{\sup}\|\tau_sf\|_{L^1(|\langle\nu,x^\prime\rangle|)}|g(s)|\,dm_G(s)\\=&\int_G\|\tau_sf\|_\nu|g(s)|\,dm_G(s)\\=& \int_G\|f\|_\nu|g(s)|\,dm_G(s)= \|f\|_\nu\|g\|_1.
\end{align*}
Thus we are done with the case when $p=1.$

Now, we shall assume that $1<p<\infty.$ We first prove (ii). By the H\"older's inequality we have,
\begin{align*}
|f*g(t)|=&\left|\int_G\tau_sf(t)g(s)\,dm_G(s)\right|\leq\int_G|\tau_sf(t)||g(s)|^{1/p}|g(s)|^{1/{p^\prime}}\,dm_G(s)\\\leq&
\left(\int_G|\tau_sf(t)|^p|g(s)|\,dm_G(s)\right)^{1/p}\left(\int_G|g(s)|\,dm_G(s)\right)^{1/{p^\prime}}\\=&\|g\|_1^{1/{p^\prime}}(|f|^p*|g|)^{1/p}(t)
\end{align*}
Thus, by the case $p=1,$ \begin{align*} 
\|f*g\|_{\nu,p}^p=&\|(f*g)^p\|_\nu\leq\|g\|_1^{p-1}\||f|^p*|g|\|_\nu\\\leq&\|g\|_1^{p-1}(\|f^p\|_\nu\|g\|_1)=\|f\|_{\nu,p}^p\|g\|_1^p.
\end{align*}
Hence (ii) follows. Now for (i), by the H\"older's inequality and by Theorem \ref{A3} we have
\begin{align*}
|f*g(t)|=&\left|\int_G\tau_sf(t)g(s)\,dm_G(s)\right|\leq\int_G|\tau_sf(t)|^{1/{p^\prime}}|\tau_sf(t)|^{1/p}|g(s)|\,dm_G(s)\\\leq&
\left(\int_G|\tau_sf(t)|\,dm_G(s)\right)^{1/{p^\prime}}\left(\int_G|\tau_sf(t)||g(s)|^p\,dm_G(s)\right)^{1/p}\\=&\|f\|_1^{1/{p^\prime}}(|f|*|g|^p)^{1/p}(t)\leq\|\nu(G)\|^{-1/{p^\prime}}\|f\|_\nu^{1/{p^\prime}}(|f|*|g|^p)^{1/p}(t)
\end{align*}
Now the remaining proof follows as done for (ii).
\end{proof}

\section*{Acknowledgement}
The first author would like to thank University Grants Commission, India, for providing the research grant.

\end{document}